\definecolor{bblue}{rgb}{.2,0.2,.8}
\theoremstyle{plain}
\newtheorem{theorem}{Theorem}[section]
\newtheorem{proposition}[theorem]{Proposition}
\newtheorem{lemma}[theorem]{Lemma}
\newtheorem{corollary}[theorem]{Corollary}
\newtheorem{definition}[theorem]{Definition}
\newtheorem{remark}[theorem]{Remark}
\numberwithin{equation}{section}
\numberwithin{theorem}{section}
\newcommand{\mc}[1]{{\mathcal #1}}
\newcommand{\mb}[1]{{\mathbf #1}}
\newcommand{\mf}[1]{{\mathfrak #1}}
\newcommand{\bs}[1]{{\boldsymbol #1}}
\newcommand{\bb}[1]{{\mathbb #1}}
\newcommand{\ms}[1]{{\mathscr #1}}
\renewcommand{\epsilon}{\varepsilon}
\title[Metastability from the large deviations point of view]
{Metastability from the large deviations point of view: A
$\Gamma$-expansion of the level two large deviations rate functional
of non-reversible finite-state Markov chains}
\author{C. Landim}
\address{Claudio Landim
  \hfill\break\indent IMPA \hfill\break\indent Estrada Dona Castorina
  110, \hfill\break\indent
J. Botanico, 22460 Rio de Janeiro, Brazil\hfill\break\indent
  {\normalfont and} \hfill\break\indent CNRS UMR 6085, Universit\'e de
  Rouen, \hfill\break\indent Avenue de l'Universit\'e, BP.12,
  Technop\^ole du Madril\-let, \hfill\break\indent
F76801 Saint-\'Etienne-du-Rouvray, France.} 
\email{landim@impa.br}
\begin{document}

\begin{abstract}
Consider a sequence of continuous-time Markov chains
$(X^{(n)}_t:t\ge 0)$ evolving on a fixed finite state space $V$.  Let
$\ms I_n$ be the level two large deviations rate functional for
$X^{(n)}_t$, as $t\to\infty$.  Under a hypothesis on the jump rates,
we prove that $\ms I_n$ can be written as
$\ms I_n = \ms I^{(0)} \,+\, \sum_{1\le p\le \mf q} (1/\theta^{(p)}_n)
\, \ms I^{(p)}$ for some rate functionals $\ms I^{(p)}$. The weights
$\theta^{(p)}_n$ correspond to the time-scales at which the sequence
of Markov chains $X^{(n)}_t$ exhibit a metastable behavior, and the
zero level sets of the rate functionals $\ms I^{(p)}$ identify the
metastable states.
\end{abstract}

\noindent
\keywords{Metastability, Large deviations, Continuous-time Markov
processes on discrete state spaces}

\subjclass[2010]
{Primary 60F10; 60J27; 60J45}


\maketitle

\section{Introduction}
\label{sec0}

Fix a finite set $V$ and consider a sequence
$\color{blue} (X^{(n)}_t : t\ge 0)$, $n\ge 1$, of $V$-valued,
irreducible continuous-time Markov chains. Denote the jump rates by
$\color{blue} R_n\colon V \times V \to \bb R_+$, and the generator by
$\ms L_n$, so that
\begin{equation}
\label{47}
{ \color{blue} (\ms L_n f)(x)}
\;=\; \sum_{y \in V} R_n(x,y)\, \{\,
f(y)\,-\, f(x)\,\}\;, \quad f\colon V\to \bb R\;.
\end{equation}
Let $\color{blue} \pi_n$ be the unique stationary state.

Denote by $\color{blue} \ms P(V)$ the space of probability measures on
$V$ endowed with the weak topology, and by $L^{(n)}_t$ the empirical
measure of the chain $X^{(n)}_t$ defined as :
\begin{equation}
\label{48}
{ \color{blue} L^{(n)}_t}
\;:=\; \frac{1}{t} \int_0^t \delta_{X^{(n)}_s}\; ds \;,
\end{equation}
where $\color{blue} \delta_x$, $x\in V$, represents the Dirac measure
concentrated at $x$. Thus, $L^{(n)}_t$ is a random element of
$\color{blue}\ms P(V)$ and $L^{(n)}_t (V_0)$, $V_0 \subset V$, stands
for the average amount of time the process $X^{(n)}_t$ stays at $V_0$
in the time interval $[0,t]$,

As the Markov chain $X^{(n)}_t$ is irreducible, by the ergodic
theorem, for any starting point $x\in V$, as $t\to\infty$, the
empirical measure $L^{(n)}_t$ converges in probability to the stationary
state $\color{blue} \pi_n$.

Donsker and Varadhan \cite{dv75} proved the associated large
deviations principle: for any $x\in V$, $\mu\in \ms P(V)$,
\begin{equation}
\label{46}
\mb P^n_{\! x} \big[\, L^{(n)}_t  \,\sim\, \mu\,\big] \;\approx \;
e^{-t\,  \ms I_n(\mu)}\;, \quad \text{as $t\to\infty$} \;.
\end{equation}
In this formula, $\color{blue} \mb P_{\! x}=\mb P^n_{\! x}$, $x\in V$,
represents the distribution of the process $X^{(n)}_t$ starting from
$x$, and $\ms I_n \colon \ms P(V) \to [0,+\infty)$ be the level two
large deviations rate functional given by
\begin{equation}
\label{35}
{\color{blue}  \ms I_n (\mu) 
\;: =\; \sup_{H} J^{(n)}_H(\mu) } \;:=\;
\sup_{H} \,-\,  \int_{V} 
e^{- H} \, \ms L_n e^{H}\; d\mu\;,
\end{equation}
where the supremum is carried over all functions
$H\colon V \to \bb R$.  A precise statement of \eqref{46} requires
some notation and is postponed to the next section.  The functional
$\ms I_n$ provides the cost for the empirical measure $L^{(n)}_t$ to
be close to $\mu$ for a very large $t$. By Lemma \ref{l32}, as the
process is irreducible, $\ms I_n(\mu)=0$ if, and only if,
$\mu = \pi_n$,

We examine in this article the behavior of the functionals $\ms I_n$
as $n\to\infty$ under some natural hypotheses on the jump rates
$R_n$. Assume, initially, that the jump rates $R_n(x,y)$ converge, as
$n\to\infty$, to a limit represented by $\bb R_0(x,y)$:
\begin{equation}
\label{o-01}
{\color{blue} \bb R_{0} (x,y)}
\;:=\; \lim_n R_n(x,y) \;\in\, \bb R_+ \;, \quad y\,\neq\, x \,\in\, V \;. 
\end{equation}
Denote by $\color{blue} \bb L_0$ the generator associated to these
rates and by $\color{blue} \ms I^{(0)}$ the corresponding large
deviations rate functional.  By Proposition \ref{mt2}, as
$n\to\infty$, $\ms I_n(\mu)$ converges to $\ms I^{(0)}(\mu)$ for all
$\mu\in \ms P(V)$.

If the Markov chain $\color{blue} \bb X_t$ induced by the jump rates
$\bb R_0$ has only one closed irreducible class, the asymptotic
analysis of the functionals $\ms I_n$ ends with Proposition \ref{mt2}.
In contrast, if $\bb X_t$ has more than one closed irreducible class
a finer description of $\ms I_n$ is possible.

Denote by $\color{blue} \ms V_1, \dots, \ms V_{\mf n}$, $\mf n\ge 2$,
the closed irreducible classes of $\bb X_t$. Let
$\color{blue} \pi^\sharp_j$ be the stationary state supported in
$\ms V_j$. By Lemma \ref{l32}, $\ms I^{(0)}$ vanishes at any convex
combination of the measures $\pi^\sharp_j$. Since, by Proposition
\ref{mt2}, $\ms I_n(\mu)$ converges to $\ms I^{(0)}(\mu)$ it is
natural to consider the sequence $\theta_n\, \ms I_n(\mu)$, for some
$\theta_n\to\infty$ and a convex combination
$\mu = \sum_j \omega_j\, \pi^\sharp_j$ of the measures $\pi^\sharp_j$,
longing to obtain a non-trivial limit.

To find the correct sequence $\theta_n$, remark that, by \eqref{35},
$\theta_n\, \ms I_n$ represents the large deviations rate functional
of the Markov chain induced by the generator $\theta_n\, \ms L_n$,
that is, the rate functional of the Markov chain $X^{(n)}_{t}$
observed at the time scale $\theta_n$:
$\color{blue} X^{1,n}_t := X^{(n)}_{\theta_n\, t}$.

Denote by $\beta_{n,j}$ the transition time from $\ms V_j$ to
$\cup_{k\not = j} \ms V_k$, this is the mean time for the process
$X^{(n)}_t$ to hit $\cup_{k\not = j} \ms V_k$ when it starts from
$\ms V_j$. For the sake of the argument, assume that
$\beta_n = \beta_{n,1}$.

Fix a time-scale $\theta_n$ such that $\theta_n \to\infty$,
$\theta_n/\beta_n\to 0$. Denote this last relation by
$\theta_n \prec \beta_n$ or $\beta_n\succ \theta_n$. As the transition
time from $\ms V_j$ to $\cup_{k\not = j} \ms V_k$ is of order
$\beta_n$ and $\beta_n \succ \theta_n$, in the time-scale $\theta_n$
starting from $\ms V_j$ the chain $X^{(n)}_t$ does not visit the set
$\cup_{k\not = j} \ms V_k$. Therefore, the cost for keeping the
process at $\ms V_j$ should vanish, and one expects
$\theta_n\, \ms I_n (\pi^\sharp_1) \to 0$. Actually, as
$\beta_{n,j} \succ \theta_n$ for all $j$, the same conclusion should
hold for all measures $\pi^\sharp_j$, and to derive a non-trivial
limit for $\theta_n\, \ms I_n$ one has to observe the chain
$X^{(n)}_t$ in a time-scale at least of the order $\beta_n$.

In the time scale $\beta_n$, starting from $\ms V_j$
the process visits $\cup_{k\not = j} \ms V_k$. There is, in
consequence, a positive cost to maintain it at $\ms V_j$ and
$\theta_n\, \ms I_n (\pi^\sharp_1)$ should converge to a positive
limit. If all sequence $\beta_{n,j}$ are of the same order, this
completes the description of $\ms I_n$. Otherwise, one has to go to
longer time-scales.

The main result of this article, Theorem \ref{mt1}, presents the
time-scales $1\prec \theta^{(1)}_n \prec \cdots \prec
\theta^{(\mf q) }_n$ and functionals $\ms I^{(1)},  \cdots ,
\ms I^{(\mf q)}_n$ such that
\begin{equation*}
\theta^{(p)}_n\, \ms I_n \;\longrightarrow \ms I^{(p)}\;, \quad 1\le
p\le \mf q\;.
\end{equation*}
This result permits to write the functional $\ms I_n$ as the expansion
\begin{equation}
\label{60}
\ms I_n  \;=\; \ms I^{(0)} \;+\; \sum_{p=1}^{\mf q}
\frac{1}{\theta^{(p)}_n} \, \ms I^{(p)}\;.
\end{equation}
The weights $\theta^{(p)}_n$ correspond to the time-scales at which
the sequence of Markov chains $X^{(n)}_t$ exhibit a metastable
behavior, and the zero level sets of the rate functionals
$\ms I^{(p)}$ identify the metastable states.

The proof o Theorem \ref{mt1} relies on \cite{bl7, lx16} where the
metastable behavior of the sequence $X^{(n)}_t$ has been
investigated. The expansion \eqref{60} has been derived for reversible
diffusions in \cite{GM17} and for reversible finite state Msrkov
chains in \cite{bgl2}.  It should be a universal property of Markov
chains and should hold for dynamics whose state space depend on $n$
and which exhibit a metastable behavior at different time-scales. This
includes, among others models, randoms walks and diffusions in
potential fields \cite{s95, begk02, lmt2015, ls2018, ls19, lms19,
ls22, ls22b, rs18}, condensing zero-range processes \cite{bl3, agl,
l2014, s2018, or19}, inclusion processes \cite{bdg17, GRV-13, CCG-14,
kim21, ks21}.

We believe that the argument proposed here to derive the expansion of
the large deviations rate functional cab be adapted to cover these
dynamics. 

\section{Notation and Results}
\label{sec1}

We present in this section the main result of the article.  Consider a
sequence $\color{blue} (X^{(n)}_t : t\ge 0)$, $n\ge 1$, of $V$-valued,
irreducible continuous-time Markov chains whose generator is given by
\eqref{47}. 

Denote by $\color{blue} D(\bb R_+, W)$, $W$ a finite set, the space of
right-continuous functions $\mf x: \bb R_+ \to W$ with left-limits
endowed with the Skorohod topology and the associated Borel
$\sigma$-algebra. Let $\color{blue} \mb P_{\! x}=\mb P^n_{\! x}$,
$x\in V$, be the distribution of the process $X^{(n)}_t$ starting from
$x$. This is the probability measure on the path space $D(\bb R_+, V)$
induced by the Markov chain $X^{(n)}_t$ starting from $x$. Expectation
with respect to $\mb P_{\! x}$ is represented by
$\color{blue} \mb E_x$.

Recall the definition of the empirical measure $L^{(n)}_t$ introduced
in \eqref{48}.  Donsker and Varadhan \cite{dv75} proved a large
deviations principle for the empirical measure $L^{(n)}_t$. More
precisely, they showed that for any subset $\ms A$ of $\ms P(V)$,
\begin{equation}
\label{50}
\begin{aligned}
-\, \inf_{\mu\in \ms A^o} \ms I_n (\mu) \;&\le\; \liminf_{t\to \infty}
\inf_{x\in V}\, \frac{1}{t}\,
\ln \mb P^{(n)}_x \big[ \, L^{(n)}_t \in \ms A\,\big] \\
&\qquad \;\le\; \limsup_{t \to \infty} \sup_{x\in V}\,
\frac{1}{t}\, \ln \mb P^{(n)}_x \big[ \, L^{(n)}_t \in \ms A\,\big]
\;\le\; -\, \inf_{\mu\in \overline{\ms A}} \ms I_n (\mu)\;.
\end{aligned}
\end{equation}
In this formula, $\ms A^o$, $\overline{\ms A}$ represent the interior,
closure of $\ms A$, respectively, and $\ms I_n$ is the large
deviations rate functional introduced in \eqref{35}.

We examine in this article the asymptotic behavior of the rate
functional $\ms I_n$.  In the context of large deviations, the
appropriate notion of convergence is the $\Gamma$-convergence defined
as follows.  We refer to \cite{Br} for an overview on this subject.

Fix a Polish space $\mc X$ and a sequence $(U_n : n\in\bb N)$ of
functionals on $\mc X$, $U_n\colon \mc X \to [0,+\infty]$.  The
sequence $U_n$ \emph{$\Gamma$-converges} to the functional
$U\colon \mc X\to [0,+\infty]$ if and only if the two following
conditions are met:

\begin{itemize}
\item [(i)]\emph{$\Gamma$-liminf.} The functional $U$ is a
$\Gamma$-liminf for the sequence $U_n$: For each $x\in\mc X$ and each
sequence $x_n\to x$, we have that $\liminf_n U_n(x_n) \ge U(x)$.

\item [(ii)]\emph{$\Gamma$-limsup.} The functional $U$ is a
$\Gamma$-limsup for the sequence $U_n$: For each $x\in\mc X$ there
exists a sequence $x_n\to x$ such that
\begin{equation}
\label{30}
\limsup_{n\to \infty} U_n(x_n) \;\le\; U(x)\;.
\end{equation}
\end{itemize}

Recall that we denote by $R_n(x,y)$ the jump rates of the Markov chain
$X^{(n)}_t$. Assume that the rates converge, as $n\to \infty$, to a
finite limit denoted by $\bb R_0(x,y)$, see \eqref{o-01}, and that
$\bb R_0(x',y') >0$ for some $y'\not = x'\in V$.  The jump rates
$\bb R_0 (x,y)$ induce a continuous-time Markov chain on $V$, denoted
by $\color{blue} (\bb X_t:t\ge 0)$, which, of course, may be
reducible. Denote by $\color{blue} \bb L^{(0)}$ its generator and by
$\ms I^{(0)} : \ms P(V) \to \bb R_+$ the associated occupation-time
large deviations rate functional, given by
\begin{equation}
\label{o-f11}
{\color{blue} \ms I^{(0)} (\mu)} \;=\; 
\sup_{H}   \, -\, \sum_{x\in V} e^{-H(x)} \, \big[\,
(\bb L^{(0)} e^{H}) \, (x) \;\big]\;  \mu(x)  \;,
\end{equation}
where the supremum is carried over all functions
$H: V \to \bb R$. Next result is proved in Section \ref{sec4}. 

\begin{proposition}
\label{mt2}
The sequence of functionals $\ms I_n$ $\Gamma$-converges to
$\ms I^{(0)}$.
\end{proposition}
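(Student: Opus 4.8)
The plan is to check the two conditions of $\Gamma$-convergence on the compact simplex $\ms P(V)$, using that for a \emph{fixed} test function $H$ the functional depends affinely on $\mu$ and continuously on the jump rates. Throughout I write \eqref{35} in the explicit form
\begin{equation*}
J^{(n)}_H(\mu) \;=\; \sum_{x\in V}\mu(x)\sum_{y\in V} R_n(x,y)\,\big(\,1 - e^{H(y)-H(x)}\,\big)\;,
\end{equation*}
which depends on $H$ only through the differences $H(y)-H(x)$ and obeys the uniform a priori bound $0\le \ms I_n(\mu)\le \sum_{x,y}\mu(x)R_n(x,y)$.

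The $\Gamma$-liminf inequality is the easy half. Fix $\mu$, a sequence $\mu_n\to\mu$ and a test function $H$. Since $V$ is finite and $R_n(x,y)\to\bb R_0(x,y)$, the map above is jointly continuous in $(\mu_n,R_n)$, so $J^{(n)}_H(\mu_n)\to J^{(0)}_H(\mu)$. As $\ms I_n(\mu_n)\ge J^{(n)}_H(\mu_n)$ for every $H$, taking $\liminf_n$ and then the supremum over $H$ yields $\liminf_n \ms I_n(\mu_n)\ge \sup_H J^{(0)}_H(\mu)=\ms I^{(0)}(\mu)$.

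For the $\Gamma$-limsup I would use the constant recovery sequence $\mu_n\equiv\mu$, so that the claim reduces to the pointwise upper bound $\limsup_n \ms I_n(\mu)\le \ms I^{(0)}(\mu)$. The cleanest route is through the dual (level $2.5$) representation obtained from the explicit form above by convex duality: writing $c_n(x,y)=\mu(x)R_n(x,y)$ and $\Phi(q,r)=q\log(q/r)-q+r$, one has $\ms I_n(\mu)=\inf_Q \sum_{x\neq y}\Phi\big(Q(x,y),c_n(x,y)\big)$, the infimum running over nonnegative divergence-free flows $Q$ (those with $\sum_y [Q(x,y)-Q(y,x)]=0$ for every $x$); the constraint set is independent of $n$. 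Let $Q^{\star}$ be a near optimal flow for $\ms I^{(0)}(\mu)$. Since $\ms I^{(0)}(\mu)<\infty$, the entropy penalty forces $Q^{\star}$ to be supported on the edges with $\mu(x)\bb R_0(x,y)>0$; on those edges $R_n(x,y)\to\bb R_0(x,y)>0$, so $Q^{\star}$ is an admissible competitor for $\ms I_n(\mu)$ for all large $n$. Using $Q^{\star}$ together with the continuity of $\Phi(q,\cdot)$ on the relevant support gives $\ms I_n(\mu)\le \sum_{x\neq y}\Phi\big(Q^{\star}(x,y),c_n(x,y)\big)\longrightarrow \sum_{x\neq y}\Phi\big(Q^{\star}(x,y),\mu(x)\bb R_0(x,y)\big)$, and, $Q^{\star}$ being near optimal, the right-hand side approaches $\ms I^{(0)}(\mu)$ as the optimality gap is sent to zero, whence the bound.

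The \emph{main obstacle} is exactly this upper bound, and the difficulty it conceals becomes visible in the direct approach through maximizers. Picking near maximizers $H_n$ of $\ms I_n(\mu)$ normalized by $\max_x H_n(x)=0$ and passing to a subsequence with $e^{H_n}\to w$, $w(x)\in[0,1]$, one must cope with the fact that the maximizers genuinely escape to the boundary $\{w=0\}$: on the states $x$ with $w(x)=0$ the ``upward'' edges have $H_n(y)-H_n(x)\to+\infty$, and one has to show that the products $R_n(x,y)\,e^{H_n(y)-H_n(x)}\,\mu(x)$ stay bounded and contribute nonpositively in the limit, which uses crucially that the corresponding limiting rates $\bb R_0(x,y)$ vanish. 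When the set $\{w=0\}$ carries several distinct growth rates this forces an iterated, multi-scale stratification of $V$ by the asymptotic size of $e^{H_n}$, with careful bookkeeping of the edges surviving at each level. This is precisely the degeneration that the dual flow representation bypasses, which is why I would adopt it as the primary argument and keep the direct blow-up only as a cross-check.
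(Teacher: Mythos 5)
Your $\Gamma$-liminf half is exactly the paper's argument (Lemma \ref{l37}): test $\ms I_n(\mu_n)$ against a fixed $H$, use joint continuity in $(\mu_n,R_n)$, then optimise over $H$. For the $\Gamma$-limsup you also make the same first move as the paper (Lemma \ref{l09}): take the constant recovery sequence and prove the pointwise bound $\limsup_n \ms I_n(\mu)\le \ms I^{(0)}(\mu)$. But from there your route is genuinely different. The paper works on the primal side: the identity $\ms I_n(\mu)=\ms K_n(\mu)$ of Lemma \ref{l04} and \eqref{05b} decomposes the rate functional over the equivalence classes of the chain reflected at the support of $\mu$; the oscillation bound \eqref{38} together with Remark \ref{rm1} gives compactness of the class-wise optimal tilts $H^{(b)}_n$, and one passes to the limit class by class, the degenerating edges entering only through the term $\sum_{x,y}\mu(x)R_n(x,y)$. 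Your dual route through divergence-free flows avoids all tilt-compactness estimates: the admissible set is $n$-independent, $\Phi$ is continuous at every edge that matters ($\Phi(0,\cdot)$ is the identity off the support of $Q^\star$, and $c_0>0$ on its support), and the blow-up of maximizers, which the paper's Appendix A handles by the multi-level construction \eqref{29}, never appears. A further bonus: your argument uses only $R_n\to\bb R_0$, whereas the paper's proof of Lemma \ref{l09} invokes the fixed edge set \eqref{51} to make the equivalence classes $n$-independent.

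The one step you must actually supply is the representation you appeal to, and precisely in its delicate regime. On the $n$-side you only need the elementary weak-duality inequality $\ms I_n(\mu)\le\sum_{x\neq y}\Phi\bigl(Q(x,y),c_n(x,y)\bigr)$ for divergence-free $Q$, which follows from $\Phi(q,r)=\sup_\lambda\,[\lambda q-r(e^\lambda-1)]$ and $\sum_{x\neq y}\bigl(H(y)-H(x)\bigr)Q(x,y)=0$. But on the limit side you need the strong-duality direction
\begin{equation*}
\inf_{Q}\,\sum_{x\neq y}\Phi\bigl(Q(x,y),\mu(x)\,\bb R_0(x,y)\bigr)\;\le\;\ms I^{(0)}(\mu)\;,
\end{equation*}
for data that are degenerate: $\bb L^{(0)}$ is reducible and $\mu$ need not have full support. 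There Slater-type constraint qualifications fail — in general no divergence-free flow is strictly positive on all edges with $\mu(x)\bb R_0(x,y)>0$ (take $\mu$ concentrated on a state that only leaks into an absorbing one) — so ``by convex duality'' is not a one-line justification, and the level-$2.5$ contraction results in the literature are usually stated for irreducible chains. The inequality is nevertheless true and fillable by a routine perturbation: both sides depend on the data only through the matrix $c$, so replace $c_0$ by $c_0+\delta>0$, for which the constant flow is divergence-free and strictly positive and finite-dimensional strong duality applies; the perturbed supremum is $\le\ms I^{(0)}(\mu)$ because $\sum_{x\neq y}\bigl(1-e^{H(y)-H(x)}\bigr)\le 0$; then let $\delta\downarrow 0$, using superlinearity of $\Phi$ in its first argument for compactness of near-optimal flows and joint lower semicontinuity of $\Phi$ to pass to the limit. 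With that paragraph written out (it is the dual counterpart of the degeneration handled by the paper's Lemma \ref{l04}), your proof is complete and correct.
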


Assume from now on that
\begin{equation}
\label{49}
\text{the Markov chain $\bb X_t$ has more than one closed
irreducible class.}
\end{equation}
Under this hypothesis we may investigate further the asymptotic
behavior of the rate functional $\ms I_n$.

\subsection*{Main assumption}

To examine the convergence of $\theta_n\, \ms I_n$ for some sequence
$\theta_n\to \infty$, we introduce a natural hypothesis on the jump
rates proposed in \cite{bl7} and adopted in \cite{lx16, fk17, bgl2}.

For two sequences of positive real numbers $(\alpha_n : n\ge 1)$,
$(\beta_n : n\ge 1)$, $\color{blue} \alpha_n \prec \beta_n$ or
$\color{blue} \beta_n \succ \alpha_n$ means that
$\lim_{n\to\infty} \alpha_n/\beta_n = 0$. Similarly,
$\color{blue} \alpha_n \preceq \beta_n$ or
$\color{blue} \beta_n \succeq \alpha_n$ indicates that either
$\alpha_n \prec \beta_n$ or $\alpha_n/\beta_n$ converges to a positive
real number $a\in (0,\infty)$.

Two sequences of positive real numbers $(\alpha_n : n\ge 1)$,
$(\beta_n : n\ge 1)$ are said to be \emph{comparable} if
$\alpha_n \prec \beta_n$, $\beta_n \prec \alpha_n$ or
$\alpha_n / \beta_n \to a \in (0,\infty)$. This condition excludes the
possibility that
$\liminf_n \alpha_n /\beta_n \neq \limsup_n \alpha_n /\beta_n$.

A set of sequences
$(\alpha^{\mf u}_n: n\ge 1)$, $\mf u \in \mf R$, of positive real
numbers, indexed by some finite set $\mf R$, is said to be comparable
if for all $\mf u$, $\mf v \in\mf R$ the sequence
$(\alpha^{\mf u}_n : n \ge 1)$, $(\alpha^{\mf v}_n : n \ge 1)$ are
comparable.

Denote by $\color{blue} E \subset \{(x,y) \in V\times V: y\not = x\}$
a set of directed edges, and assume that for all $n\ge 1$,
\begin{equation}
\label{51}
\text{$R_n(x,y)>0$ if, and only if, $(x,y)\in E$}\;.
\end{equation}
Let $\bb Z_+ = \{0, 1, 2, \dots \}$, and $\color{blue} \Sigma_m$,
$m\ge 1$, be the set of functions $k: E \to \bb Z_+$ such that
$\sum_{(x,y)\in E} k(x,y) = m$.  We assume, hereafter, that for every
$m\ge 1$ the set of sequences
\begin{equation}
\label{mh}
\big(\, \prod_{(x,y)\in E} R_n(x,y)^{k(x,y)} : n \ge 1 \,\big)
\;,\quad k\in\Sigma_m \;,
\end{equation}
is comparable.

As observed in \cite{bl7} (see Remark 2.2 in \cite{bgl2}), assumption
\eqref{mh} is fulfilled by all statistical mechanics models which
evolve on a fixed finite state space and whose metastable behaviour
has been derived.

\subsection*{Tree decomposition}

Under the assumptions \eqref{49}, \eqref{51} and \eqref{mh},
\cite{bl7, lx16} constructed a rooted tree which describes the
behaviour of the Markov chain $X^{(n)}_t$ at all different
time-scales. We refer to \cite{bgl2} for a clear presentation of the
construction as well as a simple example, and recall here the main
ideas. 

Denote by $\mf q +1 \ge 2$ the number of generations of the tree.  The
elements of the $p$-th generation form a partition of $V$, and are
represented by
$\color{blue} \ms W^{(p)}_1, \dots, \ms W^{(p)}_{\mf m_p}, \Omega_p$
for some finite increasing sequence
$\color{blue} 1 = \mf m_1 \le \cdots \le \mf m_{\mf q+1}$.  The set
$\Omega_p$ may be empty while the sets $\ms W^{(p)}_j$ are all
non-empty. As $\mf m_p\ge 1$, each generation has at least one
element. Here is a list of the main properties of the tree:

\begin{itemize}
\item[(1.a)] Each generation of the tree forms a partition of $V$;

\item[(1.b)] The root, or $0$-th generation, is the set $V$. The first
generation has one or two elements depending on whether $\Omega_1$ is
empty or not. If $\Omega_1 = \varnothing$, it has one element, the set
$\ms W^{(1)}_1 = V$. If $\Omega_1 \neq \varnothing$, it has two
elements, the sets $\ms W^{(1)}_1$ and $\Omega_1 = (\ms W^{(1)}_1)^c$.

\item[(1.c)] Each child of a vertex is a subset of its parent: For
each $0\le p \le \mf q$, $1\le j\le \mf m_{p+1}$, either
$\ms W^{(p+1)}_j \subset \ms W^{(p)}_k$ for some $1\le k\le \mf m_{p}$
or $\ms W^{(p+1)}_j \subset \Omega_p$. Moreover,
$\Omega_{p+1}\subset \Omega_p$;

\item[(1.d)] According to the notation, the number of elements of
generation $p$ is equal to
$\mf m_p + \bs 1_{\Omega_p \neq \varnothing}$, where
$\bs 1_{\ms A\neq \varnothing}$ is equal to $1$ if $\ms A$ is not
empty and $0$ otherwise. Starting from the first generation, the
number of descendents of a generation strictly increases: for
$1\le p \le \mf q$,
$\mf m_p + \bs 1_{\Omega_p \neq \varnothing} < \mf m_{p+1} + \bs
1_{\Omega_{p+1} \neq \varnothing}$.
\end{itemize}

\subsection*{Construction of the tree}

We describe in this subsection the details of the construction of the
tree.  it is formed from the leaves to the root.  The leaves
$\ms V^{(1)}_1, \dots, \ms V^{(1)}_{\mf n_1}$ are the closed
irreducible classes of the Markov chain $\bb X_t$ introduced in the
previous section, and $\Delta_1$ the set of transient states. The sets
$\ms V^{(1)}_j$ were represented there by $\ms V_j$, a notation
frequently adopted below.  In view of the definition of the sets
$\ms V^{(1)}_j$, $\mf n_1$ corresponds to the number of closed
irreducible classes of the process $\bb X_t$, which we assumed in
\eqref{49} to be larger than or equal to $2$.

We turn to the construction of the parents of the leaves. This
procedure will be repeated recursively to define all generations from
the leaves to the root.  Denote by $\Phi_1: V \to S_1$ the projection
defined by
\begin{equation*}
\Phi_1(\,\cdot\,) \;=\; \sum_{j\in S_1} j\, \chi_{\ms V^{(1)}_j}
(\,\cdot\,)\;, 
\end{equation*}
where $\color{blue} \chi_{\ms A}$ stands for the indicator function of
the set $\ms A$. Hence, $\Phi_1$ projects to $0$ all elements of
$\Delta_1$ and to $j$ the ones of $\ms V^{(1)}_j$.

It follows from the main results of \cite{lx16, llm18} and
\cite[Lemmata 4.7]{bgl2} that there exist a time-scale
$\theta^{(1)}_n\succ 1$ and a $S_1$-valued Markov chain
$\color{blue} \bb X^{(1)}_t$ (note that this process does not take the
value $0$), such that the finite-dimensional distributions of
$\Phi_1( X^{(n)}_{t\theta^{(1)}_n})$ converge to those of $\bb
X^{(1)}_1$: 
\begin{equation}
\label{58}
\Phi_1\big(\, X^{(n)}_{t\theta^{(1)}_n}\,\big) \;
\overset{\rm f. d. d.}{-\!-\!\!\!\longrightarrow}
\; \bb X^{(1)}_t \;.
\end{equation} 

Denote by $\color{blue} \mf R^{(1)}_1, \dots, \mf R^{(1)}_{\mf n_2}$
the recurrent classes of the $S_1$-valued Markov chain
$\bb X^{(1)}_t$, and by $\color{blue} \mf T_1$ the transient
states. Let ${\color{blue} \mf R^{(1)}} = \cup_j \mf R^{(1)}_j$, and
observe that $\{\mf R^{(1)}_1, \dots $,
$\mf R^{(1)}_{\mf n_{2}}, \mf T_1\}$ forms a partition of the set
$S_1$. This partition of $S_1$ induces a new partition of $V$. Let
\begin{equation}
\label{59}
{\color{blue}  \ms V^{(2)}_m}
\;:= \; \bigcup_{j\in \mf R^{(1)}_m} \ms V^{(1)}_j\;, \quad
{\color{blue}  \ms T^{(1)}}
\;:= \; \bigcup_{j\in \mf T_1} \ms V^{(1)}_j\;, 
\quad m\in  {\color{blue} S_{2} \;:=\; \{1, \dots, \mf n_{2}\}} \;,
\end{equation}
so that $V  \,=\,  \Delta_{2}   \, \cup \, \ms V^{(2)}$, where
\begin{equation*}
{\color{blue} \ms V^{(2)}}
\;=\; \bigcup_{m\in S_{2}} \ms V^{(2)}_{m}\;,
\quad
{\color{blue}  \Delta_{2}}
\;:= \; \Delta_1 \,\cup\, \ms T^{(2)} \;.
\end{equation*}

It is shown in \cite{lx16} that the Markov chain $\bb X^{(1)}_t$ is
non-degenerate in the sense that there exists at least one edge
$(j,k)$, $k\neq j\in S_1$, such that $r^{(1)}(j,k)>0$, where
$r^{(1)}(\,\cdot\,,\,\cdot\,)$ represents the jump rates of the Markov
chain $\bb X^{(1)}_t$. In particular, either $j$ is a transient state
or $j$ and $k$ belong to the same closed irreducible class. Therefore,
the number of recurrent classes ($\mf n_2$) is strictly smaller than
the number of $S_1$ elements ($\mf n_1$): $\mf n_2<\mf n_1$. Since, on
the other hand, $\Delta_{2} \supset \Delta_1$, the number of leaves'
parents (the generation $\mf q-1$ in the previous subsection) is
strictly smaller than the one of leaves (the generation $\mf q$).

In conclusion, from the partition
$\ms V^{(1)}_1, \dots, \ms V^{(1)}_{\mf n_1}, \Delta_1$, the theory
presented in \cite{lx16} produced a time-scale
$\theta^{(1)}_n \succ 1$, a $S_1$-valued Markov chain $\bb X^{(1)}_t$,
and a coarser partition $\ms V^{(2)}_1, \dots, \ms V^{(2)}_{\mf n_2}$,
$\Delta_2$. The construction of the tree proceeds by recurrence.

Assume that, for some $p> 1$,
the recursion has produced
\begin{itemize}
\item[(a)] Time scales
$1\prec \theta^{(1)}_n\prec \cdots \prec \theta^{(p-1)}_n$;

\item[(b)] $S_q$-valued Markov chains $\bb X^{(q)}_t$, $1\le q<p$, where
$\color{blue} S_q = \{1, \dots, \mf n_q\}$; 

\item[(c)] Partitions
$\ms V^{(r)}_1, \dots, \ms V^{(r)}_{\mf n_r}, \Delta_r$,
$1\le r\le p$
\end{itemize}
satisfying \eqref{58}, \eqref{59} (with the obvious modifications
which appear in \eqref{58b}, \eqref{59b}). Assume, furthermore, that
$\mf n_p>1$. Then, by \cite{lx16, llm18} and \cite[Lemmata 5.6]{bgl2},
there exist a time-scale $\theta^{(p)}_n\succ \theta^{(p-1)}_n$ and a
$S_p$-valued Markov chain $\color{blue} \bb X^{(p)}_t$ such that the
finite-dimensional distributions of
$\Phi_p( X^{(n)}_{t\theta^{(p)}_n})$ converge to those of
$\bb X^{(p)}_1$:
\begin{equation}
\label{58b}
\Phi_p\big(\, X^{(n)}_{t\theta^{(p)}_n}\,\big) \;
\overset{\rm f. d. d.}{-\!-\!\!\!\longrightarrow}
\; \bb X^{(p)}_t \;.
\end{equation}
In this formula, $\Phi_p: V \to S_p$ represents the projection
defined by
\begin{equation*}
\Phi_p(\,\cdot\,) \;=\; \sum_{j\in S_p} j\, \chi_{\ms V^{(p)}_j}
(\,\cdot\,)\;.
\end{equation*}

Denote by $\color{blue} \mf R^{(p)}_1, \dots, \mf R^{(p)}_{\mf n_{p+1}}$
the recurrent classes of the $S_p$-valued Markov chain
$\bb X^{(p)}_t$, and by $\color{blue} \mf T_p$ the transient
states. Let ${\color{blue} \mf R^{(p)}} = \cup_j \mf R^{(p)}_j$, and
observe that $\{\mf R^{(p)}_1, \dots $,
$\mf R^{(p)}_{\mf n_{p+1}}, \mf T_p\}$ forms a partition of the set
$S_p$. This partition of $S_p$ induces a new partition of $V$. Let
\begin{equation}
\label{59b}
{\color{blue}  \ms V^{(p+1)}_m}
\;:= \; \bigcup_{j\in \mf R^{(p)}_m} \ms V^{(p)}_j\;, \quad
{\color{blue}  \ms T^{(p)}}
\;:= \; \bigcup_{j\in \mf T_p} \ms V^{(p)}_j\;, 
\quad m\in  {\color{blue} S_{p+1} \;:=\; \{1, \dots, \mf n_{p+1}\}} \;,
\end{equation}
so that $V  \,=\,  \Delta_{p+1}   \, \cup \, \ms V^{(p+1)}$, where
\begin{equation*}
{\color{blue} \ms V^{(p+1)}}
\;=\; \bigcup_{m\in S_{p+1}} \ms V^{(p+1)}_{m}\;,
\quad
{\color{blue}  \Delta_{p+1}}
\;:= \; \Delta_p \,\cup\, \ms T^{(p+1)} \;.
\end{equation*}

As above, it is shown in \cite{lx16} that the Markov chain
$\bb X^{(p)}_t$ is non-degenerate so that $\mf n_{p+1}<\mf n_p$.  The
induction can proceed if $\mf n_{p+1}>1$, otherwise it ends. Denote by
$\color{blue} \mf q$ the first integer $r$ such that $\mf n_{r+1}=1$,
(equivalently, the first $r$ such that the Markov chain
$\bb X^{(r)}_t$ has only one recurrent class). At this point the
iteration stops and the partition of $V$ produced is
$\{\ms V^{(\mf q+1)}_1, \Delta_{\mf q+1}\}$ which may have one or two
elements, depending on whether $\Delta_{\mf q+1}$ is empty or not.

To recover the tree presented in the previous subsection, add a final
partition equal to $V$ which will identified to the root of the tree,
and for $1\le p \le \mf q +1$,
$k\in S_{\mf q+2 -p} = \{1, \dots, \mf n_{\mf q+2 -p}\}$, set
\begin{equation*}
\mf m_p \;:=\; \mf n_{\mf q+2 -p}  \;,
\quad
\ms W^{(p)}_k \; :=\; \ms V^{({\mf q+2 -p})}_k  \;,
\quad
\Omega_p \;=\;  \Delta_{\mf q +2- p} \;.
\end{equation*}
It is easy to check that conditions (1.a)--(1.d) are fulfilled. 

\subsection*{A set of measures}

We construct in this subsection a set of probability measures
$\pi^{(p)}_j$, $1\le p\le \mf q +1$, $j\in S_p$, on $V$ which
describe the evolution of the chain $X^{(n)}_t$ and such that
\begin{equation}
\label{o-52}
\text{ the support of $\pi^{(p)}_j$ is the set $\ms V^{(p)}_j$}\; .
\end{equation}

We proceed by induction. Let $\pi^{(1)}_j$, $j\in S_{1}$, be the
probability measure on $\ms V^{(1)}_j$ given by
$\color{blue} \pi^{(1)}_j = \pi^\sharp_j$, where, recall,
$\pi^\sharp_j$ represents the stationary states of the Markov chain
$\bb X_t$ restricted to the closed irreducible class
$\ms V^{(1)}_j = \ms V_j$. Clearly, condition \eqref{o-52} is
fulfilled.

Fix $1\le p\le \mf q$, and assume that the probability measures
$\pi^{(p)}_j$, $j\in S_p$, has been defined and satisfy condition
\eqref{o-52}. Denote by $\color{blue} M^{(p)}_m(\cdot)$,
$m\in S_{p+1}$, the stationary state of the Markov chain
$\bb X^{(p)}_t$ restricted to the closed irreducible class
$\mf R^{(p)}_m$. The measure $ M^{(p)}_m$ is understood as a measure
on $S_p=\{1, \dots, \mf n_p\}$ which vanishes on the complement of
$\mf R^{(p)}_m$.  Let $\pi^{(p+1)}_m$ be the probability measure on
$\ms V^{(p)}_m$ given by
\begin{equation}
\label{o-80}
{\color{blue} \pi^{(p+1)}_m (x)}
\;:=\; \sum_{j\in \mf R^{(p)}_m} M^{(p)}_m(j)\, \pi^{(p)}_j (x)\;, \quad
x\in V\;.
\end{equation}

Clearly, condition \eqref{o-52} holds, and the measure
$\pi^{(p+1)}_m$, $1\le p\le \mf q$, $m\in S_{p+1}$, is a convex
combination of the measures $\pi^{(p)}_j$, $j\in \mf R^{(p)}_m$.
Moreover, by \cite[Theorem 3.1 and Proposition 3.2]{bgl2}, for all
$z\in \ms V^{(p)}_j$,
\begin{equation}
\label{o-58}
\lim_{n\to\infty} \frac{\pi_n(z)}{\pi_n(\ms V^{(p)}_j)}
\;=\; \pi^{(p)}_j(z) \,\in\, (0,1] \;, \quad
\lim_{n\to \infty} \pi_n(\Delta_{\mf q+1}) \;=\; 0
\;.
\end{equation}

By \eqref{o-80}, the measures $\pi^{(p)}_j$, $2\le p\le \mf q+1$,
$j\in S_p$, are convex combinations of the measures $\pi^{(1)}_k$,
$k\in S_1$. By \eqref{o-58}, for all $x\,\in\, \ms V^{(\mf q+1)}$,
$\lim_{n\to\infty} \pi_n(x)$ exists and belongs to $(0,1]$. By
\eqref{o-58}, and since by (1.c) $\Delta_p \subset \Delta_{p+1}$ for
$1\le p\le \mf q$, $\lim_{n\to \infty} \pi_n(\Delta_{p}) \;=\; 0$ for
all $p$.

\subsection*{A complete description of the chain $X^{(n)}_t$}

The statement of next result requires some notation.  Denote by
$H_{\ms A}$, $H^+_{\ms A}$, ${\ms A}\subset V$, the hitting and return
time of ${\ms A}$:
\begin{equation} 
\label{o-201}
{\color{blue} H_{\ms A} } \;: =\;
\inf \big \{t>0 : X^{(n)}_t \in {\ms A} \big\}\;,
\quad
{\color{blue} H^+_{\ms A}} \;: =\;
\inf \big \{t>\tau_1 : X^{(n)}_t \in {\ms A} \big\}\; ,  
\end{equation}
where $\tau_1$ represents the time of the first jump of the chain
$X^{(n)}_t$:
$\color{blue} \tau_1 = \inf\{t>0 : X^{(n)}_t \not = X^{(n)}_0\}$.

For $1\le p\le \mf q+1$, $k\in S_p$, let
\begin{equation*}
\breve{\ms V}^{(p)}_k
\;:= \; \bigcup_{j\in S_{p}\setminus \{k\}} \ms V^{(p)}_{j} \;.
\end{equation*}
Define $\color{blue} \mf a^{(p-1)}\colon V \times S_p \to [0,1]$ as
follows. Fix $j\in S_p$. If $x\not\in \ms V^{(p)}$, set
\begin{equation*}
\mf a^{(p-1)} (x,j) \;: =\; \lim_{n\to\infty} \mb P^n_x
\big[\, H_{\ms V^{(p)}_j} \,<\,  H_{\breve{\ms V}^{(p)}_j}\,\big]\;,
\end{equation*}
while, if $x\in \ms V^{(p)}_k$ for $k\in S_p$, set
$\mf a^{(p-1)} (x,j)=\delta_{j,k}$. For $p=\mf q+1$, as $S_{\mf q+1}$
is a singleton, $\mf a^{(\mf q)} (x,1) = 1$ for all $x\in V$.

Denote by $p^{(n)}_t(x,y)$ the transition probability of
the Markov chain $X^{(n)}_t$:
\begin{equation*}
{ \color{blue}  p^{(n)}_t(x,y)}
\;:=\; \mb P^n_{\! x} \big[\, X_t \,=\, y\,\big]\;,
\quad x\,,\, y \in V\;,\;\; t\,>\,0\;.
\end{equation*}
Next result is \cite[Theorem 3.1 and Proposition 3.2]{bgl2}.

\begin{theorem}
\label{t1}
Under the hypotheses \eqref{51} and \eqref{mh}, for each
$1\le p\le \mf q$, $t>0$, $x \in V$,
\begin{equation}
\label{o-60}
\lim_{n\to\infty} p^{(n)}_{t \theta^{(p)}_n} (x,\,\cdot\,) \;=\;
\sum_{j\in S_p} \omega^{(p)}_t(x,j)\, \pi^{(p)}_j (\,\cdot\,)\;,
\end{equation}
where
\begin{equation*}
\omega^{(p)}_t(x,j) \;=\;
\sum_{k\in S_{p}} \mf a^{(p-1)} (x,k) \; p^{(p)}_t(k,j) \;,
\end{equation*}
and $p^{(p)}_t(k,j)$ is the transition matrix of the Markov chain $\bb
X^{(p)}_t$.  Moreover, 
\begin{itemize}
\item[(3.a)] Let $\theta^{(0)}_n =1$,
$\theta^{(\mf q +1)}_n =+\infty$ for all $n\ge 1$. For
each $1\le p\le \mf q +1$, sequence $(\beta_n:n\ge 1)$ such that
$\theta^{(p-1)}_n \,\prec\, \beta_n\,\prec\, \theta^{(p)}_n$, and
$x \in V$,
\begin{equation*}
\lim_{n\to\infty} p^{(n)}_{\beta_n} (x,\,\cdot\, ) \;=\;
\;=\; \sum_{j\in S_p}
\mf a^{(p-1)} (x,j)\, \pi^{(p)}_j (\,\cdot\, )\;.
\end{equation*}

\item[(3.b)] For all $1\le p\le \mf q$, $1\le j\le \mf n_p$, $x\in V$, 
\begin{equation*}
\lim_{t\to\infty} \lim_{n\to\infty} p^{(n)}_{t \theta^{(p)}_n} (x,\,\cdot\,)
\;=\; \sum_{m\in S_{p+1}}
\mf a^{(p)}(x,m)\, \pi^{(p+1)}_m (\,\cdot\,)\;.
\end{equation*}
\end{itemize}
\end{theorem}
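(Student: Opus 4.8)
The plan is to prove the main identity \eqref{o-60} by induction on $p$ and to read off (3.a) and (3.b) as its $t\to 0^+$ and $t\to\infty$ boundary behaviors. The picture behind \eqref{o-60} is a clean separation of three effects at the scale $\theta^{(p)}_n$: starting from $x$, on a scale $\prec\theta^{(p)}_n$ the chain relaxes into one of the wells $\ms V^{(p)}_k$, choosing $k$ with the hitting probability $\mf a^{(p-1)}(x,k)$; on the scale $\theta^{(p)}_n$ it performs the coarse inter-well motion of $\bb X^{(p)}_t$, reaching well $j$ with probability $p^{(p)}_t(k,j)$; and inside each well it has equilibrated to the conditioned measure $\pi^{(p)}_j$. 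Composing the three gives $\omega^{(p)}_t(x,j)=\sum_k\mf a^{(p-1)}(x,k)\,p^{(p)}_t(k,j)$ and the limiting law $\sum_j\omega^{(p)}_t(x,j)\,\pi^{(p)}_j$.

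First I would treat a starting point $z\in\ms V^{(p)}_k$, so that $\Phi_p(z)=k$. The finite-dimensional convergence \eqref{58b} delivers the correct coarse marginal, $\mb P^n_z[\,\Phi_p(X^{(n)}_{t\theta^{(p)}_n})=j\,]\to p^{(p)}_t(k,j)$, and the only remaining task is to upgrade it to the full law by showing that, conditionally on $\{X^{(n)}_{t\theta^{(p)}_n}\in\ms V^{(p)}_j\}$, the position converges to $\pi^{(p)}_j$. I would obtain this local equilibration from the characterization \eqref{o-58} of $\pi^{(p)}_j$ as the conditioned stationary measure together with the recursive formula \eqref{o-80}: because mixing inside $\ms V^{(p)}_j$ occurs already on the scale $\theta^{(p-1)}_n\prec\theta^{(p)}_n$, at time $t\theta^{(p)}_n$ the chain has been inside the well for a time $\gg\theta^{(p-1)}_n$ and has relaxed to $\pi^{(p)}_j$. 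This yields \eqref{o-60} in the special case $\mf a^{(p-1)}(z,\cdot)=\delta_{k,\cdot}$.

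For a general $x$, choose $\beta_n$ with $\theta^{(p-1)}_n\prec\beta_n\prec\theta^{(p)}_n$ and use the Markov property
\begin{equation*}
p^{(n)}_{t\theta^{(p)}_n}(x,y)\;=\;\sum_{z\in V}p^{(n)}_{\beta_n}(x,z)\,p^{(n)}_{\,t\theta^{(p)}_n-\beta_n}(z,y)\;.
\end{equation*}
The first factor converges, by (3.a), to $\sum_k\mf a^{(p-1)}(x,k)\,\pi^{(p)}_k(z)$, a measure charging only the wells; since $\beta_n\prec\theta^{(p)}_n$ we have $t\theta^{(p)}_n-\beta_n\sim t\theta^{(p)}_n$, so the second factor is controlled by the previous step. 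As $\pi^{(p)}_k$ charges only $\ms V^{(p)}_k$, summing over the finite set $V$ collapses the product to $\sum_j\big(\sum_k\mf a^{(p-1)}(x,k)\,p^{(p)}_t(k,j)\big)\pi^{(p)}_j(y)$, which is \eqref{o-60}. The induction is then closed as follows. Statement (3.a) at level $p$ is precisely the iterated limit (3.b) at level $p-1$: sending $t\to\infty$ after $n\to\infty$ at scale $\theta^{(p-1)}_n$ yields $\sum_m\mf a^{(p-1)}(x,m)\,\pi^{(p)}_m$, and one must check that this value equals $\lim_n p^{(n)}_{\beta_n}$ for every single scale $\beta_n$ with $\theta^{(p-1)}_n\prec\beta_n\prec\theta^{(p)}_n$, i.e. that the iterated limit and the diagonal sequential limit agree. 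Finally (3.b) at level $p$ follows from \eqref{o-60} by letting $t\to\infty$: the transition probabilities $p^{(p)}_t(k,j)$ tend to the absorption-weighted stationary laws of $\bb X^{(p)}_t$, and the defining identity \eqref{o-80} repackages $\sum_j(\cdots)\,\pi^{(p)}_j$ exactly as $\sum_m\mf a^{(p)}(x,m)\,\pi^{(p+1)}_m$. The base case $p=1$ is the classical convergence $X^{(n)}_t\to\bb X_t$ on the $O(1)$ scale followed by the long-time behavior of the finite chain $\bb X_t$.

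The genuinely hard part is the local-equilibration claim: \eqref{58b} controls only the coarse variable $\Phi_p$ and is silent about the fine position inside a well, so turning ``the chain lies in $\ms V^{(p)}_j$'' into ``the chain is distributed as $\pi^{(p)}_j$ inside $\ms V^{(p)}_j$'' is the real analytic content. It is exactly here that the separation $\theta^{(p-1)}_n\prec\theta^{(p)}_n$ and the conditioned-stationarity estimate \eqref{o-58} must be used quantitatively and uniformly in the entry point of the well. The matching of an iterated limit with a single diagonal scale $\beta_n$ in the induction step is of the same nature and would be handled by the same separation-of-scales estimates.
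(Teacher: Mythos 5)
First, a point of calibration: the paper does not prove Theorem \ref{t1} at all. The sentence immediately preceding it reads ``Next result is \cite[Theorem 3.1 and Proposition 3.2]{bgl2}'', so the theorem is imported wholesale from \cite{bgl2}, whose proof in turn rests on the potential-theoretic machinery of \cite{bl2,bl7,lx16,llm18}. Your proposal is therefore not competing with an internal argument of this paper, and it must be judged on whether it could stand alone as a proof. It cannot, because the two steps you yourself flag as ``the genuinely hard part'' are not technical debts to be settled later --- they \emph{are} the theorem, and the tools you invoke cannot close them.

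Concretely: (a) the local-equilibration step. Equation \eqref{58b} controls only the law of the coarse variable $\Phi_p(X^{(n)}_{t\theta^{(p)}_n})$, and \eqref{o-58} is a statement about the stationary measure $\pi_n$, not about the law at time $t\theta^{(p)}_n$ of the process started from a point. No combination of these two facts yields the claim that, conditioned on $X^{(n)}_{t\theta^{(p)}_n}\in\ms V^{(p)}_j$, the position is asymptotically distributed as $\pi^{(p)}_j$ \emph{uniformly over the entry point into the well}; passing from stationarity of $\pi_n$ to thermalization of the time-$t\theta^{(p)}_n$ law requires quantitative hitting-time and mixing estimates (capacity bounds, the trace-process results of \cite{bl2,lx16}, the local-ergodicity results of \cite{llm18}), none of which your argument supplies or replaces. (b) The identification, needed for (3.a), of the diagonal limit $\lim_n p^{(n)}_{\beta_n}(x,\cdot)$ along an \emph{arbitrary} intermediate sequence $\theta^{(p-1)}_n\prec\beta_n\prec\theta^{(p)}_n$ with the iterated limit $\lim_{t\to\infty}\lim_n p^{(n)}_{t\theta^{(p-1)}_n}(x,\cdot)$ is an interchange of limits that is false for general arrays and requires uniform-in-$n$ relaxation bounds. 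The same unproven interchange already sits inside your base case $p=1$: the classical convergence of $X^{(n)}$ to $\bb X$ is a fixed-time statement and says nothing about times $\beta_n\to\infty$, however slowly. Your induction scheme ($\eqref{o-60}$ at level $p-1$ $\Rightarrow$ (3.b) at $p-1$ $\Rightarrow$ (3.a) at $p$ $\Rightarrow$ \eqref{o-60} at $p$, the last arrow via the Markov property at an intermediate scale) is a correct and non-circular organization of the statement, and the $t\to\infty$ bookkeeping via \eqref{o-80} is fine; but every inductive arrow is carried by exactly the estimates that remain unproven, which is why the paper outsources the entire theorem to \cite{bgl2} rather than proving it.
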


Equation \eqref{o-60} and properties (3.a), (3.b) describe the
behavior of the Markov chain $X^{(n)}_t$ in all time-scales. By
\eqref{o-60}, for instance, starting from $x$, as $n\to\infty$, the
distribution of $X^{(n)}_{t\theta^{(p)}_n}$ is a convex combination
of the measures $\pi^{(p)}_j$. The weights $\omega^{(p)}_t(x,j)$ have
a simple interpretation: $\omega^{(p)}_t(x,j)$ is equal to the
probability that starting from $x$ the chain reaches the set
$\ms V^{(p)}$ at $\ms V^{(p)}_k$ times the probability that the Markov
chain $\bb X^{(p)}_t$ starting from $k$ is at $j$ at time $t$.

Clearly, by \eqref{o-60}, for all $1\le p\le \mf q$, $j\in S_p$,
$x\in V$,
\begin{equation*}
\lim_{t\to0} \lim_{n\to\infty} p^{(n)}_{t \theta^{(p)}_n} (x,\,\cdot\,) 
\;=\; \sum_{j\in S_p}
\mf a^{(p-1)} (x,j)\, \pi^{(p)}_j (\,\cdot\, )\;.
\end{equation*}

\subsection*{The $\Gamma$-expansion of the rate functional $\ms I_n$}

We are now in a position to state the main result of this article.
Denote by $\color{blue} \ms P(S_p)$, $1\le p\le \mf q$, the set of
probability measures on $S_p$ and by $\color{blue}\bb L^{(p)}$ the
generator of the $S_p$-valued Markov chain $\bb X^{(p)}_t$. Let
$\bb I^{(p)} \colon \ms P (S_p) \to [0,+\infty)$ be the level two
large deviations rate functional of $\bb X^{(p)}_t$ given by
\begin{equation}
\label{40}
{\color{blue} \bb I^{(p)} (\omega) } \, :=\,
\sup_{\mb h} \,-\,  \sum_{j\in S_p} \omega_j \,
e^{- \mb h (j) } \, (\bb L^{(p)} e^{ \mb h})(j)  \;,
\end{equation}
where the supremum is carried over all functions
$\mb h:S_p \to \bb R$.  Denote by
$\ms I^{(p)} \colon \ms P(V) \to [0,+\infty]$ the functional given by
\begin{equation}
\label{o-83b}
{\color{blue} \ms I^{(p)} (\mu) } \, :=\,
\left\{
\begin{aligned}
& \bb I^{(p)} (\omega)   \quad \text{if}\;\;
\mu = \sum_{j\in S_p} \omega_j \, \pi^{(p)}_j \;\; \text{for}\;\;
\omega \in \ms P (S_p)\;,  \\
& +\infty \quad\text{otherwise}\;.
\end{aligned}
\right.
\end{equation}

The main result of the article reads as follows.

\begin{theorem}
\label{mt1}
For each $1\le p\le \mf q$, the functional $\theta^{(p)}_n\, \ms I_n$
$\Gamma$-converges to $\ms I^{(p)}$.
\end{theorem}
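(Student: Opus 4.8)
The plan is to recast the statement through the Donsker--Varadhan duality between the rate functional and the principal eigenvalue, and then to reduce the whole problem to a single averaging statement for that eigenvalue. For $U\colon V\to\bb R$ write $\Lambda_n(U)$ for the principal (Perron) eigenvalue of the matrix $\theta^{(p)}_n\,\ms L_n+\mathrm{diag}(U)$. Since $\theta^{(p)}_n\,\ms L_n$ is a conservative generator and the chain is irreducible, $\Lambda_n$ is finite, and the classical variational identities give
\begin{equation*}
\Lambda_n(U)\;=\;\sup_{\mu\in\ms P(V)}\big[\,\langle U,\mu\rangle-\theta^{(p)}_n\,\ms I_n(\mu)\,\big]\;,
\qquad
\theta^{(p)}_n\,\ms I_n(\mu)\;=\;\sup_{U}\big[\,\langle U,\mu\rangle-\Lambda_n(U)\,\big]\;,
\end{equation*}
the second being precisely \eqref{35} after the substitution $g=e^{H}$ (with $U=\Lambda_n(U)-g^{-1}\ms L_n g$ at the optimal eigenfunction $g$). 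In particular $\Lambda_n$ is convex and finite on $\bb R^V$, and $\theta^{(p)}_n\,\ms I_n$ is its Legendre transform $\Lambda_n^{*}$ (extended by $+\infty$ off $\ms P(V)$). Thus it suffices to identify the limit of the conjugates $\Lambda_n^{*}$.

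The core of the argument is the following averaging principle for the principal eigenvalue: for every fixed $U\in\bb R^V$,
\begin{equation*}
\Lambda_n(U)\;\longrightarrow\;\Lambda_\infty(U)\;:=\;\text{the principal eigenvalue of }\;\bb L^{(p)}+\mathrm{diag}(\bar U)\;\;\text{on}\;\;S_p\;,
\qquad
\bar U(m)\;:=\;\sum_{x\in\ms V^{(p)}_m}\pi^{(p)}_m(x)\,U(x)\;.
\end{equation*}
I would prove this by Feynman--Kac: $\Lambda_n(U)$ is the exponential growth rate of $\mb E_x\big[\exp\!\int_0^tU\big(X^{(n)}_{\theta^{(p)}_n s}\big)\,ds\big]$, and by Theorem \ref{t1} together with \eqref{o-58} the accelerated process $X^{(n)}_{\theta^{(p)}_n s}$ spends asymptotically all of its time inside the wells, equilibrated to $\pi^{(p)}_m$, while its coarse--graining $\Phi_p(X^{(n)}_{\theta^{(p)}_n s})$ converges to $\bb X^{(p)}_s$; hence the time integral of $U$ concentrates on $\int_0^t\bar U(\bb X^{(p)}_s)\,ds$, which pins the growth rate to $\Lambda_\infty(U)$. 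This is the step I expect to be the main obstacle: $\theta^{(p)}_n\,\ms L_n$ is a \emph{singular} perturbation (its fast entries diverge), so the eigenvalue limit does not follow from continuity and one must control the interchange of the $t\to\infty$ and $n\to\infty$ limits. This is exactly where the quantitative metastability input is indispensable --- the strict time--scale separation $\theta^{(p-1)}_n\prec\theta^{(p)}_n$, the asymptotic negligibility of the time spent in $\Delta_p$, and the convergence \eqref{o-58} of the conditioned measures to $\pi^{(p)}_m$.

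Granting the averaging principle, the conclusion follows from soft convex analysis. Each $\Lambda_n$ is finite and convex on the finite--dimensional space $\bb R^V$, so the pointwise convergence $\Lambda_n\to\Lambda_\infty$ upgrades automatically to locally uniform convergence, hence to epi--convergence; since Legendre conjugation is bicontinuous for epi--convergence (Wijsman's theorem), the conjugates $\Gamma$--converge, $\theta^{(p)}_n\,\ms I_n=\Lambda_n^{*}\xrightarrow{\;\Gamma\;}\Lambda_\infty^{*}$. Finally I would identify $\Lambda_\infty^{*}$ with $\ms I^{(p)}$. The linear map $U\mapsto\bar U$ is surjective onto $\bb R^{S_p}$, so for $\mu=\sum_{m}\omega_m\,\pi^{(p)}_m$ one has $\langle U,\mu\rangle=\langle\bar U,\omega\rangle$ and
\begin{equation*}
\Lambda_\infty^{*}(\mu)\;=\;\sup_{U}\big[\,\langle U,\mu\rangle-\Lambda_\infty(U)\,\big]
\;=\;\sup_{W\in\bb R^{S_p}}\big[\,\langle W,\omega\rangle-\text{(principal eigenvalue of }\bb L^{(p)}+\mathrm{diag}(W))\,\big]\;=\;\bb I^{(p)}(\omega)\;,
\end{equation*}
the last equality being the eigenvalue form of \eqref{40} for the chain $\bb X^{(p)}_t$; while if $\mu\in\ms P(V)$ is not of this form then $\mu\notin\mathrm{span}\{\pi^{(p)}_m\}$, so there is a $U$ with $\bar U\equiv 0$ yet $\langle U,\mu\rangle\neq 0$ (for instance $U$ supported on $\Delta_p$, or detecting a within--well profile different from $\pi^{(p)}_m$), and rescaling $U$ sends the supremum to $+\infty$. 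This matches \eqref{o-83b} exactly, and proves Theorem \ref{mt1}.
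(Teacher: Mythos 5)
Your dual reformulation is correct as far as it goes: $\theta^{(p)}_n\,\ms I_n$ is indeed the Legendre transform of the principal eigenvalue $\Lambda_n$, the convex-analytic transfer (pointwise convergence of finite convex functions on $\bb R^V$ implies locally uniform convergence, hence epi-convergence, and conjugation is bicontinuous for epi-convergence) is sound, and your identification of $\Lambda_\infty^{*}$ with $\ms I^{(p)}$ is essentially right, since the disjoint supports of the measures $\pi^{(p)}_m$ make the distinction between their span and their convex hull harmless inside $\ms P(V)$. The genuine gap is the averaging principle $\Lambda_n(U)\to\Lambda_\infty(U)$, which you assert but do not prove. In fact, by the very duality you invoke and the compactness of $\ms P(V)$, that eigenvalue convergence is \emph{equivalent} to the $\Gamma$-convergence you are trying to establish: extracting a convergent subsequence of maximizers $\mu_n$ of $\langle U,\mu\rangle-\theta^{(p)}_n\ms I_n(\mu)$ shows that the bound $\limsup_n\Lambda_n(U)\le\Lambda_\infty(U)$ is precisely what the $\Gamma$-liminf inequality delivers, while $\liminf_n\Lambda_n(U)\ge\Lambda_\infty(U)$ is precisely what the existence of recovery sequences delivers. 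So the reformulation relocates, rather than reduces, the entire difficulty of the theorem.

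The argument you sketch for the averaging principle cannot close this gap. Theorem \ref{t1} and \eqref{o-58} are statements at the level of convergence of finite-dimensional distributions, i.e., weak-convergence or law-of-large-numbers inputs; at best they control $\mb E_x\big[\exp\int_0^t U(X^{(n)}_{\theta^{(p)}_n s})\,ds\big]$ for each \emph{fixed} $t$, whereas $\Lambda_n(U)$ is defined through the limit $t\to\infty$ taken \emph{before} $n\to\infty$. Concentration of $\int_0^t U(X^{(n)}_{\theta^{(p)}_n s})\,ds$ near $\int_0^t \bar U(\bb X^{(p)}_s)\,ds$ does not pin down exponential growth rates: those are governed by exponentially unlikely occupation profiles (for instance the tilted dynamics sitting on the maximizer of $U$ inside a well), which is exactly large-deviations, not weak-convergence, information. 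To justify the interchange of limits one needs quantitative, uniform-in-$n$ control --- for example a priori Harnack-type bounds on the principal eigenfunctions $g_n$ forcing them to become constant on each $\ms V^{(p)}_j$, or uniform upper bounds on atypical occupation times --- and this is exactly the content the paper supplies by different means: the jump-rate estimates of Section \ref{sec2}, the reflected and tilted trace-process constructions of Section \ref{sec3} and Appendices \ref{sec01}--\ref{sec-a3} (harmonic extension, Lemma \ref{l06}, Proposition \ref{l05}), and an induction on $p$ in which Lemma \ref{l39} disposes of measures with $\ms I^{(p-1)}(\mu)>0$. You flag this obstacle honestly, but as written the proposal leaves the theorem unproved.
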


This theorem provides an expansion of the large deviations rate
function $\ms I_n$ which can be written as
\begin{equation}
\label{f05}
\ms I_n \;=\; \ms I^{(0)} \;+\; \sum_{p=1}^{\mf q}
\frac{1}{\theta^{(p)}_n} \; \ms I^{(p)}\;.
\end{equation}
Therefore, the rate function $\ms I_n$ encodes all the characteristics
of the metastable behavior of the chain $X^{(n)}_t$. The time-scales
$\theta^{(p)}_n$ appear as the weights of the expansion, and, by
\eqref{o-83b}, the meta-stable states $\pi^{(p)}_j$, $j\in S_p$,
generate the space where the rate functional $\ms I^{(p)} (\mu)$ is
finite.

Next result is a simple consequence of the level two large deviations
principle \eqref{50} and the $\Gamma$-convergence stated in the
previous theorem and in Proposition \ref{mt2}.  (cf. Corollary 4.3 in
\cite{mar}).

\begin{corollary}
\label{cor2}
Fix $0\le p\le \mf q$ and recall that $\theta^{(0)}_n=1$.
For every closed subset $F$ and open subset $G$ of $\ms P(V)$,
\begin{equation*}
\begin{gathered}
\limsup_{n\to \infty} \, \limsup_{t\to \infty}
\, \frac{\theta^{(p)}_n}{t} \,\sup_{x\in V} \, \log\, 
\mb P_{\! x}^n \Big[\, \frac{1}{t} \int_0^t \delta_{X^n_s}\; ds
\,\in\, F \, \Big] \;\le\; -\, \inf_{\mu\in F} \ms I^{(p)} (\mu) \;,
\\
\liminf_{n\to \infty} \, \liminf_{t\to \infty}
\, \frac{\theta^{(p)}_n}{t}  \, \inf_{x\in V} \,\log\, 
\mb P^n_{\! x} \Big[\, \frac{1}{t} \int_0^t \delta_{X^n_s}\; ds
\,\in\, G \, \Big] \;\ge\; -\, \inf_{\mu\in G} \ms I^{(p)} (\mu) \;.
\end{gathered}
\end{equation*}
\end{corollary}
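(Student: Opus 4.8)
The plan is to read the corollary off the level two large deviations principle \eqref{50} by multiplying it through by the positive weights $\theta^{(p)}_n$ and then letting $n\to\infty$, converting the $n$-asymptotics of the infima $\inf \theta^{(p)}_n\ms I_n$ into infima of the limit functional by means of $\Gamma$-convergence. Concretely, I would apply the $\Gamma$-convergence of $\theta^{(p)}_n\,\ms I_n$ to $\ms I^{(p)}$, reading the case $1\le p\le \mf q$ from Theorem \ref{mt1} and the case $p=0$ (where $\theta^{(0)}_n=1$ and the target is $\ms I^{(0)}$) from Proposition \ref{mt2}. The structural fact that makes this work with no coercivity hypothesis is that $V$ is finite, so $\ms P(V)$ is a compact metric space; this is exactly the equicoercivity needed to pass from $\Gamma$-convergence to statements about infima over closed and open sets.

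First I would record two elementary consequences of $\Gamma$-convergence on a compact space. If $U_n$ $\Gamma$-converges to $U$, then: \emph{(a)} for every closed $F$, $\liminf_n \inf_F U_n \ge \inf_F U$; and \emph{(b)} for every open $G$, $\limsup_n \inf_G U_n \le \inf_G U$. Property \emph{(b)} is immediate from the $\Gamma$-limsup bound: given $\mu\in G$, a recovery sequence $\mu_n\to\mu$ with $\limsup_n U_n(\mu_n)\le U(\mu)$ satisfies $\mu_n\in G$ eventually (as $G$ is open), whence $\limsup_n\inf_G U_n\le U(\mu)$, and one optimizes over $\mu\in G$. Property \emph{(a)} is where compactness enters: along a subsequence realizing $\liminf_n\inf_F U_n=:L$, choose near-minimizers $\mu_n\in F$ with $U_n(\mu_n)\le \inf_F U_n + 1/n$; compactness of $\ms P(V)$ extracts a convergent subsubsequence $\mu_n\to\mu$, with $\mu\in F$ because $F$ is closed, and the $\Gamma$-liminf bound gives $\inf_F U\le U(\mu)\le \liminf_n U_n(\mu_n)\le L$. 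I would then apply \emph{(a)}, \emph{(b)} with $U_n=\theta^{(p)}_n\,\ms I_n$ and $U=\ms I^{(p)}$.

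For the upper bound I would fix $n$ and start from the rightmost inequality in \eqref{50} with $\ms A=F$ closed, so that $\overline{\ms A}=F$ and $\limsup_{t\to\infty}\sup_{x\in V}\frac1t\ln\mb P^n_x[L^{(n)}_t\in F]\le -\inf_F\ms I_n$. Since $\theta^{(p)}_n>0$ is constant in $t$ and $x$, multiplication moves it inside both the inner $\limsup_t$ and the infimum, giving $\limsup_{t\to\infty}\sup_{x\in V}\frac{\theta^{(p)}_n}{t}\ln\mb P^n_x[L^{(n)}_t\in F]\le -\inf_F\theta^{(p)}_n\ms I_n$. Taking $\limsup_n$ and using $\limsup_n(-a_n)=-\liminf_n a_n$ together with \emph{(a)} yields $\le -\inf_F\ms I^{(p)}$, which is the first assertion. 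Symmetrically, for the lower bound I would start from the leftmost inequality in \eqref{50} with $\ms A=G$ open, so that $\ms A^o=G$ and $-\inf_G\ms I_n\le \liminf_{t\to\infty}\inf_{x\in V}\frac1t\ln\mb P^n_x[L^{(n)}_t\in G]$; multiplying by $\theta^{(p)}_n$, taking $\liminf_n$, and invoking $\liminf_n(-a_n)=-\limsup_n a_n$ with \emph{(b)} produces $\ge -\inf_G\ms I^{(p)}$.

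Once \emph{(a)} and \emph{(b)} are in hand the argument is bookkeeping, so I do not expect a genuine obstacle; the only points deserving care are the two I would flag explicitly. The first is checking that multiplication by $\theta^{(p)}_n$ commutes with the inner $\limsup_t$/$\liminf_t$ and with $\sup_x$/$\inf_x$, which is valid precisely because $\theta^{(p)}_n$ is a positive constant in the variables $t$ and $x$. The second is ensuring that compactness of $\ms P(V)$ is genuinely used in \emph{(a)} both to extract a convergent minimizing subsequence and to keep its limit inside the closed set $F$; this is where the finiteness of $V$ is indispensable and why no separate coercivity estimate is required. I would also note that the case $p=0$ is just \eqref{50} combined with Proposition \ref{mt2}, matching the statement of the cited Corollary 4.3 in \cite{mar}.
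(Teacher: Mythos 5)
Your proposal is correct and is essentially the paper's own argument: the paper proves this corollary by direct appeal to the level two large deviations principle \eqref{50} together with the $\Gamma$-convergence of $\theta^{(p)}_n\,\ms I_n$ (Theorem \ref{mt1}, resp.\ Proposition \ref{mt2} for $p=0$), citing Corollary 4.3 of \cite{mar}, whose proof is precisely your steps \emph{(a)} and \emph{(b)} (convergence of infima over closed and open sets, using compactness of $\ms P(V)$ in place of equicoercivity). You have simply written out the details of that cited argument, and they are accurate.
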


\subsection*{Organisation of the paper}
The article is organised as follows. In Section \ref{sec2}, we obtain
some estimates on the jump rates. This is a technical section which
can be skipped in a first reading. In Section \ref{sec3}, we prove the
$\Gamma-\limsup$ for the sequence of large deviations rate functionals
associated to the trace process. Proposition \ref{mt2} and Theorem
\ref{mt1} are proved in Section \ref{sec4}.

In the appendices we present general results on finite state Markov
chains needed in the proof of Theorem \ref{mt1} and which do not
require assumption \eqref{mh}.  In Appendix \ref{sec01} we derive some
properties of level two large deviations rate functionals. This leads
us to introduce reflected and tilted dynamics. In Appendix
\ref{sec-a2} we investigate the convergence of these functionals, and
in Appendix \ref{sec-a3} the relation between the trace process and
the rate functionals. Throughout the article we assume the reader to
be familiar with the results presented in the appendices.

\section{The jump rates}
\label{sec2}

In this section, we state some estimates of the jump rates of the
trace process on the sets $\ms V^{(p)}$ needed in the next sections.
We assume that the reader is familiar with the notation and results
presented in the appendix.

Fix $1\le p\le \mf q$, and denote by
$\color{blue} \{Y^{n,p}_t: t\ge 0\}$ the trace of
$\{X^{(n)}_t: t\ge 0\} $ on $\ms V^{(p)}$, and by
$\color{blue} R^{(p)}_n : \ms V^{(p)} \times \ms V^{(p)} \to \bb R_+$
its jump rates. By equation (2.5) in \cite{lrev},
\begin{equation}
\label{o-40}
R^{(p)}_n (x,y) \;=\; \lambda_n(x) \; \mb P^n_{\! x} \big[ H_y
= H^+_{\ms V^{(p)}} \big]\;, \quad x\,,\; y \in \ms V^{(p)} \,, 
\; x\not = y  \;.
\end{equation}

Let $r^{(p)}_n(i,j)$, $j\neq i\in S_p$, be the mean rate at which the
trace process $Y^{n,p}_t$ jumps from $\ms V^{(p)}_i$ to
$\ms V^{(p)}_j$:
\begin{equation}
\label{20}
{\color{blue}  r^{(p)}_n(i,j)}
\; := \; \frac{1}{\pi_n(\ms V^{(p)}_i)}
\sum_{x \in\ms V^{(p)}_i} \pi_n(x) 
\sum_{y\in\ms V^{(p)}_j} R^{(p)}_n(x,y) \;.
\end{equation}
By \cite[Theorem 2.7 and 2.12]{lx16}, the sequences
$\theta^{(p)}_n \, r^{(p)}_n(i,j)$ converge for all
$i\not = j\in S_p$.  Denote the limits by $r^{(p)} (i,j)$:
\begin{equation}
\label{o-34}
{\color{blue}  r^{(p)} (i,j)} \;:=\;  \lim_{n\to\infty} \theta^{(p)}_n
\, r^{(p)}_n(i, j) \;\in\; \bb R_+\;.
\end{equation}

Recall from \eqref{54} the definition of the reflection of a Markov
process on a subset of its state space.

\begin{lemma}
\label{l35}
For all $n\ge 1$, $1\le p\le \mf q$, $j\in S_p$, the trace process
$Y^{n,p}_t$ reflected at $\ms V^{(p)}_j$ is irreducible,
\end{lemma}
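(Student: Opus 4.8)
The plan is to reduce the irreducibility of the reflected process to a purely combinatorial connectivity statement about the graph $(V,E)$, and then to prove that statement by induction along the tree. Set $W := V \setminus \breve{\ms V}^{(p)}_j$. The jump rate of the reflection is $R^{(p)}_n(x,y)$ for $x,y\in \ms V^{(p)}_j$, and by \eqref{o-40} this rate is positive precisely when $X^{(n)}_t$ started at $x$ reaches $y$ through an excursion in $V\setminus \ms V^{(p)}\subset W$ before returning to $\ms V^{(p)}$. Concatenating such jumps — and noting that an excursion in $W$ can touch $\ms V^{(p)}$ only inside $\ms V^{(p)}_j$, since $W$ avoids $\breve{\ms V}^{(p)}_j$ — shows that the reflected process is irreducible if and only if any two points of $\ms V^{(p)}_j$ are mutually reachable by a directed $E$-path contained in $W$. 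Thus the whole lemma becomes the claim: for every level and every $j\in S_p$, the set $\ms V^{(p)}_j$ is mutually reachable within $V\setminus \breve{\ms V}^{(p)}_j$.

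I would prove this connectivity claim by induction on $p$. For the base case $p=1$, the set $\ms V^{(1)}_j$ is a closed irreducible class of $\bb X_t$, so any two of its points are joined by a directed path using edges $(a,b)$ with $\bb R_0(a,b)>0$. All such edges lie in $E$ and stay inside $\ms V^{(1)}_j\subset W$, and since $E$ is the common support of every $R_n$ by \eqref{51}, they carry a positive rate $R_n(a,b)$, hence a positive direct contribution to $R^{(1)}_n(a,b)$. This already connects $\ms V^{(1)}_j$ inside itself.

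For the inductive step, recall from \eqref{59b} that $\ms V^{(p)}_j = \bigcup_{i\in \mf R^{(p-1)}_j} \ms V^{(p-1)}_i$, where $\mf R^{(p-1)}_j$ is a recurrent, hence irreducible, class of $\bb X^{(p-1)}_t$. Given $x\in \ms V^{(p-1)}_{a_0}$ and $y\in \ms V^{(p-1)}_{a_r}$ with $a_0,a_r\in \mf R^{(p-1)}_j$, irreducibility of $\mf R^{(p-1)}_j$ furnishes a path $a_0\to a_1\to\cdots\to a_r$ inside $\mf R^{(p-1)}_j$ with $r^{(p-1)}(a_s,a_{s+1})>0$. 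Because the support of $r^{(p-1)}_n$ is determined by $E$ and is therefore independent of $n$, each limiting inequality $r^{(p-1)}(a_s,a_{s+1})>0$ — which by \eqref{o-34} gives $r^{(p-1)}_n(a_s,a_{s+1})>0$ for all large $n$ — in fact forces $r^{(p-1)}_n(a_s,a_{s+1})>0$ for every $n$, i.e.\ an $E$-path from some point of $\ms V^{(p-1)}_{a_s}$ to some point of $\ms V^{(p-1)}_{a_{s+1}}$ through $\Delta_{p-1}=V\setminus \ms V^{(p-1)}\subset W$. Inside each child $\ms V^{(p-1)}_{a_s}$ I would invoke the induction hypothesis to connect the landing point of one excursion to the departure point of the next; the relevant paths lie in $V\setminus \breve{\ms V}^{(p-1)}_{a_s}\subset W$. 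Concatenating the intra-child paths and the inter-child excursions yields a directed $E$-path from $x$ to $y$ inside $W$, and running the same construction with $x$ and $y$ interchanged gives mutual reachability, completing the induction.

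The main obstacle, and the only place where the tree structure is genuinely used, is ensuring in the inductive step that every excursion can be kept inside $W$. This rests on the two containments $V\setminus \ms V^{(p-1)}\subset V\setminus \ms V^{(p)}$ and $\breve{\ms V}^{(p)}_j\subset \breve{\ms V}^{(p-1)}_{a_s}$, both immediate from \eqref{59b} together with the fact that distinct recurrent classes of $\bb X^{(p-1)}_t$ involve disjoint children, combined with the observation that the jump graph, and hence every support appearing above, does not depend on $n$ because $E$ is fixed. Once these are in place the concatenation argument is routine.
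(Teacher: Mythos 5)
Your proposal is correct and follows essentially the same route as the paper's proof: an induction on $p$ whose base case comes from the closed irreducible classes of $\bb X_t$ together with assumption \eqref{51}, and whose inductive step combines the irreducibility of the recurrent class $\mf R^{(p-1)}_j$ of $\bb X^{(p-1)}_t$, the fact that $r^{(p-1)}(a_s,a_{s+1})>0$ forces positive trace rates for \emph{every} $n$ (by the $n$-independence of the supports, which the paper gets from \eqref{51} and \eqref{53}), and the induction hypothesis to connect points inside each child. The only difference is presentational: you first recast irreducibility of the reflected trace process as $E$-path connectivity inside $V\setminus\breve{\ms V}^{(p)}_j$ and verify set containments, whereas the paper works directly with the trace rates $R^{(p)}_n$ and the monotonicity \eqref{52}; these two devices do exactly the same work.
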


\begin{proof}
Fix $1\le p\le \mf q$, $j\in S_p$, $x$, $y\in \ms V^{(p)}_j$. We have
to prove that there exists a path $(x=x_0, x_1, \dots, x_\ell=y)$ such
that $x_i\in \ms V^{(p)}_j$, $R^{(p)}_n(x_i,x_{i+1})>0$ for all
$0\le i<\ell$, $n\ge 1$.

By Propositions 6.1 and 6.3 in \cite{bl2}, the trace process
$Y^{n,p}_t$ is an irreducible, $\ms V^{(p)}$-valued continuous-time
Markov chain. Fix $j\in S_p$ and denote by $Y^{n,p,j}_t$ the process
$Y^{n,p}_t$ reflected at $\ms V^{(p)}_j$.

The proof is by induction on $p$. Fix $p=1$ and consider the reflected
process $Y^{n,1,j}_t$ for $j\in S_1$.  By definition of $\ms V^{(1)}$,
the set $\ms V^{(1)}_j$ is a closed irreducible class for the chain
$\bb X_t$. Therefore, for all $x\neq y\in \ms V^{(1)}_j$, there exists
a path $(x=x_0, x_1, \dots, x_\ell=y)$ such that
$x_i\in \ms V^{(1)}_j$, $\bb R_0(x_i,x_{i+1})>0$, $0\le i<\ell$. By
assumptions \eqref{o-01}, \eqref{51}, for all $n\ge 1$,
$R_n(x_i,x_{i+1})>0$ as well, and by \eqref{52},
$R^{(1)}_n(x_i,x_{i+1})>0$, completing the proof for $p=1$.

Fix $p> 1$, and assume that the assertion of the lemma holds for
$1\le q<p$.  Consider the reflected process $Y^{n,p,m}_t$ for
$m\in S_p$, and fix $y\not = x \in \ms V^{(p)}_m$. By definition of
$\ms V^{(p)}_m$, there exists a subset $S_{p,m} \subset S_{p-1}$ such
that $\ms V^{(p)}_m = \cup_{j\in S_{p,m}} \ms V^{(p-1)}_j$.

There are two cases. Assume first that $x$ and $y$ belong to the same
set $\ms V^{(p-1)}_j$. By the induction assumption, there exists a
path $(x=x_0, x_1, \dots, x_\ell=y)$ such that
$x_i\in \ms V^{(p-1)}_j$, $R^{(p-1)}_n(x_i,x_{i+1})>0$ for all
$0\le i<\ell$ and $n\ge 1$. By \eqref{52},
$R^{(p)}_n(x_i,x_{i+1}) \ge R^{(p-1)}_n(x_i,x_{i+1})$, so that
$R^{(p)}_n(x_i,x_{i+1})>0$ for all $0\le i<\ell$ and $n\ge 1$.

Assume now that $x\in \ms V^{(p-1)}_j$ and $y\in \ms V^{(p-1)}_k$ for
$k\neq j \in S_{p,m}$. By construction of $\ms V^{(p)}_m$, there
exists a sequence $(j=j_0, j_1, \dots, j_r=k)$ such that
$j_a\in S_{p,m}$, $r^{(p-1)}(j_a,j_{a+1})>0$, $0\le a < r$.  To keep
the proof simple assume that $r^{(p-1)}(j,k)>0$. The reader will see
that the proof in the general case is similar.

Since $r^{(p-1)}(j,k)>0$, by \eqref{o-34}, \eqref{20} and
\eqref{o-58}, there exists $x'\in \ms V^{(p-1)}_j$,
$y'\in \ms V^{(p-1)}_k$ such that $R^{(p-1)}_n(x',y')>0$ for $n$
sufficiently large. By \eqref{51} and \eqref{53},
$R^{(p-1)}_n(x',y')>0$ for all $n\ge 1$. Hence, by \eqref{52},
$R^{(p)}_n(x',y')>0$ for all $n\ge 1$. We may now repeat the argument
presented in the previous paragraph to construct a path in the set
$\ms V^{(p-1)}_j$ from $x$ to $x'$, and a second one in the set
$\ms V^{(p-1)}_k$ from $y'$ to $y$. Chaining the paths yields a path
$(x=z_0, z_1, \dots, z_\ell=y)$ such that $z_i\in \ms V^{(p)}_m$,
$R^{(p)}_n(z_i,z_{i+1})>0$ for all $0\le i<\ell$ and $n\ge 1$. This
completes the proof of the lemma.
\end{proof}

\begin{lemma}
\label{l18}
Fix $1\le p \le \mf q$. Then, for all $x$, $y\in \ms V^{(p)}$,
\begin{equation*}
\lim_{n\to\infty}  R^{(p)}_n(x,y) \;=\; \bb R_0(x,y)  \;.
\end{equation*}
\end{lemma}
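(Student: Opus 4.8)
The plan is to expand the trace jump rate through the representation \eqref{o-40}, namely $R^{(p)}_n(x,y) = \lambda_n(x)\, \mb P^n_x[H_y = H^+_{\ms V^{(p)}}]$ with $\lambda_n(x) = \sum_{z\neq x} R_n(x,z)$ the holding rate at $x$, and to analyse the return event $\{H_y = H^+_{\ms V^{(p)}}\}$ by conditioning on the state reached right after the first jump. Fix $x \neq y \in \ms V^{(p)}$ and write $p_n(x,z) = R_n(x,z)/\lambda_n(x)$ for the embedded jump chain. Three cases occur after the first jump: if the chain lands directly at $y$, the return to $\ms V^{(p)}$ happens immediately at $y$; if it lands at some $z \in \ms V^{(p)}\setminus\{y\}$, the return point is $z \neq y$ and the event fails; and if it lands in $\Delta_p := V \setminus \ms V^{(p)}$, the chain performs an excursion inside $\Delta_p$ before re-entering $\ms V^{(p)}$. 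Setting $q_n(z,y) := \mb P^n_z\big[\, X^{(n)}_{H_{\ms V^{(p)}}} = y \,\big] \in [0,1]$ for $z \in \Delta_p$, this yields
\begin{equation*}
R^{(p)}_n(x,y) \;=\; R_n(x,y) \;+\; \sum_{z\in \Delta_p} R_n(x,z)\, q_n(z,y)\;.
\end{equation*}

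By \eqref{o-01} the first term converges to $\bb R_0(x,y)$, so it remains to prove that the sum vanishes. The key structural observation is that $\bb R_0(x,z) = 0$ for every $z \in \Delta_p$. Indeed, by the recursive construction \eqref{59}, \eqref{59b}, the set $\ms V^{(p)}$ is a union of leaves $\ms V^{(1)}_j$, each of which is a closed irreducible class of the limit chain $\bb X_t$; since these classes are disjoint, $x \in \ms V^{(p)}$ belongs to exactly one of them, say $\ms V^{(1)}_i$, and that whole class is one of the leaves forming $\ms V^{(p)}$, so $\ms V^{(1)}_i \subset \ms V^{(p)}$. As $\ms V^{(1)}_i$ is closed for $\bb X_t$, the limiting rate $\bb R_0(x,z)$ vanishes for all $z \notin \ms V^{(1)}_i$, in particular for all $z \in \Delta_p \subset V \setminus \ms V^{(1)}_i$.

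Combining these facts with the crude bound $q_n(z,y) \le 1$ and \eqref{o-01},
\begin{equation*}
0 \;\le\; \sum_{z\in \Delta_p} R_n(x,z)\, q_n(z,y)
\;\le\; \sum_{z\in \Delta_p} R_n(x,z)
\;\longrightarrow\; \sum_{z\in \Delta_p} \bb R_0(x,z) \;=\; 0
\end{equation*}
as $n\to\infty$, whence $R^{(p)}_n(x,y) \to \bb R_0(x,y)$, as claimed.

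I do not expect a genuine obstacle in this argument. The only nonroutine ingredient is the support property of $\bb R_0(x,\cdot)$, which is immediate once one recalls that $\ms V^{(p)}$ is a union of closed irreducible classes of $\bb X_t$; after that, the crude bound $q_n(z,y) \le 1$ suffices, because the total escape rate $\sum_{z\in\Delta_p} R_n(x,z)$ already tends to $0$, so no finer control of the excursion probabilities $q_n(z,y)$ is needed. Note also that this decomposition automatically handles the degenerate case in which $x$ is absorbing for $\bb X_t$ (so $\lambda_n(x)\to 0$), since then both terms vanish in the limit.
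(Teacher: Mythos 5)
Your proof is correct and follows essentially the same route as the paper's: both expand $R^{(p)}_n(x,y)$ via \eqref{o-40} by decomposing on the first jump, identify the leading term $R_n(x,y)\to\bb R_0(x,y)$, and kill the excursion term by noting that $x$ lies in a closed irreducible class $\ms V^{(1)}_i\subset\ms V^{(p)}$ of the limit chain $\bb X_t$, so that $R_n(x,z)\to\bb R_0(x,z)=0$ for every $z$ outside that class (in particular for $z\in V\setminus\ms V^{(p)}$), with the excursion probabilities bounded crudely by $1$. Your write-up merely makes explicit the bookkeeping (the probabilities $q_n(z,y)$ and the three cases after the first jump) that the paper leaves implicit.
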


\begin{proof}
Fix $1\le p \le \mf q$ and $x$, $y\in \ms V^{(p)}$.  By \eqref{o-40},
decomposing the probability appearing on the right-hand side of this
equation according to the first jump yields that
\begin{equation*}
R^{(p)}_n (x,y) \;=\;
R_n(x,y) \;+\;
\sum_{z\neq y} R_n(x,z) \, \mb P^n_{\! z} \big[ H_y
= H_{\ms V^{(p)}} \big] \;.
\end{equation*}
The first term converges to $\bb R_0(x,y)$. As $x\in \ms V^{(p)}$ and
(by the tree construction) $\ms V^{(p)}$ is the union of some sets
$\ms V_k$, $k\in S_1$, $x\in \ms V_j$ for some $j\in S_1$.  The
probability on the second term vanishes if $z\in \ms V^{(p)}$. We may
therefore restrict the sum to $z\not \in \ms V^{(p)}$, or to
$\ms V^c_j$ (because $\ms V_j \subset \ms V^{(p)})$. However, by
definition of $\ms V_j $, $R_n(x,z) \to 0$ for all
$z\not \in \ms V_j $. Thus, the second term of the previous displayed
formula vanishes, which completes the proof of the lemma.
\end{proof}

\begin{lemma}
\label{l17}
Fix $1\le p\le \mf q$. Then,
\begin{equation*}
\limsup_{n\to \infty} \theta^{(p)}_n\, R^{(p)}_n(x,y) \;<\; \infty
\end{equation*}
for all $k\not =j \in S_p$, $x\in\ms V^{(p)}_j$, $y\in \ms V^{(p)}_k$.
\end{lemma}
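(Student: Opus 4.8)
The plan is to dominate the individual rate $R^{(p)}_n(x,y)$ by the averaged inter-block rate $r^{(p)}_n(j,k)$ introduced in \eqref{20}, whose rescaling $\theta^{(p)}_n\, r^{(p)}_n(j,k)$ is already known to converge to a finite limit by \eqref{o-34}. The point is that a single individual transition rate is, up to a stationary-weight factor, bounded by the full block-to-block average, and both the average and the weight factor have well-understood limits.

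First I would extract a single-term bound from the definition \eqref{20}. Since $x\in\ms V^{(p)}_j$, $y\in\ms V^{(p)}_k$ and every summand on the right-hand side of \eqref{20} is nonnegative, keeping only the contribution of the pair $(x,y)$ gives
\begin{equation*}
\pi_n(x)\, R^{(p)}_n(x,y) \;\le\; \pi_n(\ms V^{(p)}_j)\, r^{(p)}_n(j,k)\;,
\end{equation*}
and hence, after multiplying by $\theta^{(p)}_n$ and dividing by $\pi_n(x)$,
\begin{equation*}
\theta^{(p)}_n\, R^{(p)}_n(x,y) \;\le\;
\frac{\pi_n(\ms V^{(p)}_j)}{\pi_n(x)}\;
\theta^{(p)}_n\, r^{(p)}_n(j,k)\;.
\end{equation*}

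Then I would pass to the limit using the two convergences at hand. By \eqref{o-34}, $\theta^{(p)}_n\, r^{(p)}_n(j,k) \to r^{(p)}(j,k)<\infty$, and by \eqref{o-58}, since $x\in\ms V^{(p)}_j$, the ratio $\pi_n(x)/\pi_n(\ms V^{(p)}_j)$ converges to $\pi^{(p)}_j(x)\in(0,1]$, so its reciprocal converges to the finite number $1/\pi^{(p)}_j(x)$. Taking $\limsup_n$ of the displayed inequality then yields
\begin{equation*}
\limsup_{n\to\infty} \theta^{(p)}_n\, R^{(p)}_n(x,y)
\;\le\; \frac{r^{(p)}(j,k)}{\pi^{(p)}_j(x)} \;<\; \infty\;,
\end{equation*}
which is the assertion.

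I do not expect a serious obstacle: the argument is essentially a single-term truncation followed by two ready-made limit statements. The only point requiring care is that $x$ and $y$ lie in \emph{distinct} blocks $\ms V^{(p)}_j\neq\ms V^{(p)}_k$, which is exactly the hypothesis and ensures that $r^{(p)}_n(j,k)$ is the appropriate averaged inter-block rate covered by \eqref{o-34}. The strict positivity $\pi^{(p)}_j(x)>0$ furnished by \eqref{o-58} is precisely what keeps the reciprocal factor bounded and prevents the estimate from degenerating.
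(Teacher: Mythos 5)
Your proof is correct and follows essentially the same route as the paper: both dominate the single rate $R^{(p)}_n(x,y)$ by the averaged inter-block rate $r^{(p)}_n(j,k)$ (the paper via the weighted sum over $z\in \ms V^{(p)}_j$ with a constant $C_0$ from \eqref{o-58}, you via a single-term truncation of \eqref{20} plus the ratio $\pi_n(\ms V^{(p)}_j)/\pi_n(x)$), and then conclude with \eqref{o-34}. The two arguments differ only in bookkeeping, not in substance.
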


\begin{proof}
As $y\in \ms V^{(p)}_k$,
$R^{(p)}_n(x,y) \le R^{(p)}_n(x,\ms V^{(p)}_k)$.  By \eqref{o-58},
there exists a finite constant $C_0$ such that
\begin{equation*}
\theta^{(p)}_n\, R^{(p)}_n(x,y) \;\le\; C_0\, \theta^{(p)}_n\, 
\sum_{z\in \ms V^{(p)}_j} \frac{\pi_n(z)}{\pi_n(\ms V^{(p)}_{j})}
R^{(p)}_n(z,\ms V^{(p)}_k)\;.
\end{equation*}
It remains to recall \eqref{o-34} to complete the proof.
\end{proof}

\begin{lemma}
\label{l34}
For all $1\le p\le \mf q$, $k\not = j \in S_1$ such that
$\ms V^{(1)}_j \cup \ms V^{(1)}_k \subset \ms V^{(p)}$,
$x\in \ms V^{(1)}_j$, the sequence
$\theta^{(1)}_n\, R^{(p)}_n(x,\ms V^{(1)}_k)$ is bounded.
\end{lemma}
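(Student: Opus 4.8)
The plan is to reduce $R^{(p)}_n(x,\ms V^{(1)}_k)$ to level-one trace rates between distinct leaf classes, which are precisely the quantities controlled at the scale $\theta^{(1)}_n$ by Lemma \ref{l17}. First I would record that, by the tree construction \eqref{59}, \eqref{59b}, the sets are nested, $\ms V^{(p)} \subset \ms V^{(1)}$, and each is a union of whole leaf classes $\ms V^{(1)}_i$, $i\in S_1$; in particular the hypothesis gives $\ms V^{(1)}_j \subset \ms V^{(p)}$. Since the trace of $X^{(n)}_t$ on $\ms V^{(p)}$ coincides with the trace on $\ms V^{(p)}$ of the process $Y^{n,1}_t$ (trace of a trace on a smaller subset, a standard property of trace processes), I would apply the identity \eqref{o-40} with $Y^{n,1}_t$, rather than $X^{(n)}_t$, as the underlying chain. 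Writing $\lambda^{(1)}_n(x) = \sum_{w\in\ms V^{(1)}} R^{(1)}_n(x,w)$ for the holding rate of $Y^{n,1}_t$ at $x$ and $\mb P^{n,1}_x$ for its law started from $x$, and summing \eqref{o-40} over the target set, this yields
\begin{equation*}
R^{(p)}_n(x,\ms V^{(1)}_k) \;=\; \lambda^{(1)}_n(x)\,
\mb P^{n,1}_x\big[\, \text{the first return of }Y^{n,1}\text{ to }\ms V^{(p)}\text{ lies in }\ms V^{(1)}_k \,\big]\;.
\end{equation*}

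The key step is a combinatorial observation on this first return. Start $Y^{n,1}_t$ from $x\in\ms V^{(1)}_j$ with $j\neq k$. Because $\ms V^{(1)}_j\subset\ms V^{(p)}$, if the first jump of $Y^{n,1}$ stays inside $\ms V^{(1)}_j$, then the process is already back in $\ms V^{(p)}$ at a point of $\ms V^{(1)}_j$, so the first return to $\ms V^{(p)}$ occurs in $\ms V^{(1)}_j\neq\ms V^{(1)}_k$. Hence the event above forces the first jump of $Y^{n,1}$ to leave $\ms V^{(1)}_j$, and, as $Y^{n,1}$ takes values in $\ms V^{(1)}=\bigcup_{i\in S_1}\ms V^{(1)}_i$,
\begin{equation*}
\mb P^{n,1}_x\big[\, \text{the first return to }\ms V^{(p)}\text{ lies in }\ms V^{(1)}_k \,\big]
\;\le\; \frac{1}{\lambda^{(1)}_n(x)} \sum_{i\in S_1\setminus\{j\}} R^{(1)}_n(x,\ms V^{(1)}_i)\;.
\end{equation*}
Multiplying by $\lambda^{(1)}_n(x)$ gives the bound $R^{(p)}_n(x,\ms V^{(1)}_k)\le\sum_{i\neq j} R^{(1)}_n(x,\ms V^{(1)}_i)$, which no longer refers to the deeper trace on $\ms V^{(p)}$.

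To conclude, I would multiply this inequality by $\theta^{(1)}_n$ and bound the right-hand side term by term. For each $i\neq j$ and each $y\in\ms V^{(1)}_i$, the points $x\in\ms V^{(1)}_j$ and $y\in\ms V^{(1)}_i$ lie in distinct level-one blocks, so Lemma \ref{l17} applied with $p=1$ gives $\limsup_n \theta^{(1)}_n R^{(1)}_n(x,y)<\infty$. Since $S_1$ and each $\ms V^{(1)}_i$ are finite, $\theta^{(1)}_n\sum_{i\neq j} R^{(1)}_n(x,\ms V^{(1)}_i)$ is a finite sum of bounded sequences, hence bounded, which proves the lemma. The only delicate points are the reduction to the trace $Y^{n,1}$ on $\ms V^{(1)}$—so that the rates surviving the estimate are exactly the level-one rates governed by $\theta^{(1)}_n$—and the observation that reaching $\ms V^{(1)}_k$ before re-entering $\ms V^{(p)}$ necessarily costs one jump out of $\ms V^{(1)}_j$. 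Once these are in place, Lemma \ref{l17} at the first level supplies precisely the time-scale $\theta^{(1)}_n$ required, and no finer control of the individual leak rates $R_n(x,\cdot)$ is needed.
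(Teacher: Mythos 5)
Your proof is correct, but it follows a genuinely different route from the paper's. The paper proves the lemma by induction on $p$: it applies \eqref{o-40} to the original chain $X^{(n)}_t$, splits the return event according to whether $H^+_{\ms V^{(p)}} = H^+_{\ms V^{(p-1)}}$ or $H^+_{\ms V^{(p-1)}} < H^+_{\ms V^{(p)}}$, controls the first case by $R^{(p-1)}_n(x,\ms V^{(1)}_k)$ and the induction hypothesis, and kills the second case using the fact that the limiting rates from the recurrent class of $\bb X^{(p-1)}_t$ containing $x$ into the transient classes $\ms V^{(p-1)}_\ell$, $\ell\in S'_{p-1}$, vanish (that is, $r^{(p-1)}(m,\ell)=0$), together with $\theta^{(1)}_n \le \theta^{(p-1)}_n$. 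You instead collapse all intermediate levels at once: the trace-of-trace identity $\mf T_{\ms V^{(p)}}\, \ms L_n = \mf T_{\ms V^{(p)}}\, \mf T_{\ms V^{(1)}}\, \ms L_n$ (which the paper itself uses, in one-point-removal form, in the proof of Lemma \ref{l06}), the representation \eqref{o-40} applied to the chain $Y^{n,1}_t$, and the first-jump observation give the deterministic domination
\begin{equation*}
R^{(p)}_n(x,\ms V^{(1)}_k) \;\le\; \sum_{i\in S_1\setminus\{j\}} R^{(1)}_n(x,\ms V^{(1)}_i)\;,
\end{equation*}
after which Lemma \ref{l17} with $p=1$ (whose proof relies only on \eqref{20}, \eqref{o-34} and \eqref{o-58} and precedes this lemma, so there is no circularity) concludes; this final step is exactly the paper's base case $p=1$. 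Your argument is shorter, requires no induction and no input on recurrence or transience of the chains $\bb X^{(p')}_t$, and yields a bound uniform in $p$ and $k$; the paper's scheme, on the other hand, invokes \eqref{o-40} only for the original chain and runs in parallel with the level-by-level decompositions used in Lemma \ref{l20} and Corollary \ref{l19}, at the cost of the extra transience argument. The only point you should make explicit is the justification of the trace-of-trace identity (immediate from the time-change definition \eqref{100}, since for $\ms V^{(p)}\subset\ms V^{(1)}$ the time spent in $\ms V^{(p)}$ is not affected by deleting excursions outside $\ms V^{(1)}$), as the paper only states it for sets differing by a single point.
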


\begin{proof}
The proof is by induction on $p$. For $p=1$, the assertion of the
lemma follows from \eqref{o-34}, \eqref{20} and \eqref{o-58}.

Fix $p> 1$, and assume that the assertion of the lemma holds for
$1\le q<p$. Fix $k\not = j \in S_1$ such that
$\ms V^{(1)}_j \cup \ms V^{(1)}_k \subset \ms V^{(p)}$,
$x\in \ms V^{(1)}_j$. By \eqref{o-40},
\begin{equation*}
R^{(p)}_n (x,\ms V^{(1)}_k) \;=\;
\lambda_n(x) \; \mb P^n_{\! x} \big[ H_{\ms V^{(1)}_k}
= H^+_{\ms V^{(p)}} \big]\;.
\end{equation*}

Recall that $\ms V^{(p)} \subset\ms V^{(p-1)}$.  Assume that
$H^+_{\ms V^{(p)}} = H^+_{\ms V^{(p-1)}}$. Later we consider the case
$H^+_{\ms V^{(p)}} > H^+_{\ms V^{(p-1)}}$. In the first case, we need
to estimate
\begin{equation*}
\begin{aligned}
& \lambda_n(x) \; \mb P^n_{\! x} \big[ H_{\ms V^{(1)}_k}
= H^+_{\ms V^{(p)}} \,,\, H^+_{\ms V^{(p)}} = H^+_{\ms V^{(p-1)}}
\big] \\
&\quad
\le\; \lambda_n(x) \; \mb P^n_{\! x} \big[ H_{\ms V^{(1)}_k}
= H^+_{\ms V^{(p-1)}}  \big] \;=\;
R^{(p-1)}_n (x,\ms V^{(1)}_k) \;.
\end{aligned}
\end{equation*}
By the induction hypothesis this later quantity multiplied by 
$\theta^{(1)}_n$ is bounded.

It remains to estimate the expression
\begin{equation*}
\lambda_n(x) \; \mb P^n_{\! x} \big[ H^+_{\ms V^{(p-1)}} < H^+_{\ms V^{(p)}} 
\big]  \;.
\end{equation*}
By construction, there exists $S'_{p-1} \subset S_{p-1}$ such that
$\ms V^{(p-1)} \setminus \ms V^{(p)} = \cup_{\ell\in S'_{p-1}} \ms
V^{(p-1)}_\ell$. Mind that $S'_{p-1}$ consists of the transient points
of the $S_{p-1}$-valued Markov chain $\bb X^{(p-1)}_t$.  Since
$x\in \ms V^{(p)} \subset \ms V^{(p-1)}$, let
$m\in S_{p-1} \setminus S'_{p-1}$ such that $x\in \ms V^{(p-1)}_{m}$.
With this notation, the previous term is bounded by
\begin{equation*}
\sum_{\ell\in S'_{p-1}}
\lambda_n(x) \; \mb P^n_{\! x}
\big[ H_{\ms V^{(p-1)}_\ell}  = H^+_{\ms V^{(p-1)}} \big]
\;=\; \sum_{\ell\in S'_{p-1}} R^{(p-1)}_n (x,\ms V^{(p-1)}_\ell) \;.
\end{equation*}
By\eqref{o-34}, \eqref{20} and \eqref{o-58}, the limit as $n\to\infty$
of the previous expression multiplied by $\theta^{(p-1)}_n$ is bounded by
\begin{equation*}
\sum_{\ell\in S'_{p-1}} r^{(p-1)} (m,\ell) \;.
\end{equation*}
This sum vanishes because $m$ is a recurrent point of the chains
$\bb X^{(p-1)}_t$ and $S'_{p-1}$ is a transient subset. To complete
the proof of the lemma, it remains to recall that
$\theta^{(1)}_n \le \theta^{(p-1)}_n$.
\end{proof}

\begin{lemma}
\label{l20}
Fix $1\le p < q\le \mf q$.  Then, $r^{(p)}(i,k) \,=\, 0$ for all
$k \neq i \in S_p$ such that $\ms V^{(p)}_k \not\subset \ms V^{(q)}$,
$\ms V^{(p)}_i\subset \ms V^{(q)}$.
\end{lemma}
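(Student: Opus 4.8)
The plan is to argue entirely at the level of the limiting chain $\bb X^{(p)}_t$, using that by \eqref{o-34} the numbers $r^{(p)}(i,k)$ are the jump rates of $\bb X^{(p)}_t$ and that, by the construction in \cite{lx16}, the sets $\mf R^{(p)}_m$ are exactly its recurrent (closed irreducible) classes. The one structural fact I would invoke is elementary: a recurrent class of a finite-state Markov chain is closed, so there is no positive-rate transition from a recurrent state to a state outside its own class. Thus it suffices to show that $i$ is recurrent for $\bb X^{(p)}_t$ and that $k$ does not belong to the recurrent class of $i$.

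First I would record the dichotomy forced by the tree construction. Since $q>p$, iterating \eqref{59b} shows that $\{\ms V^{(q)}_m : m\in S_q\}\cup\{\Delta_q\}$ is a coarsening of the level-$p$ partition, so that $\ms V^{(q)}$ is a union of cells $\ms V^{(p)}_j$, while $\Delta_q$ is $\Delta_p$ together with a union of further such cells. As the cells $\ms V^{(p)}_j$ are pairwise disjoint and disjoint from $\Delta_p$, every $\ms V^{(p)}_j$ lies entirely in $\ms V^{(q)}$ or entirely in $\Delta_q$; the same dichotomy holds one level up for the cells $\ms V^{(p+1)}_m$, since $q\ge p+1$.

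Next I would check that $i$ is recurrent for $\bb X^{(p)}_t$. Because the sets $\Delta_r$ are increasing in $r$, we have $\Delta_{p+1}=\Delta_p\cup\ms T^{(p)}\subset\Delta_q$, where $\ms T^{(p)}=\bigcup_{\ell\in\mf T_p}\ms V^{(p)}_\ell$ collects the cells with transient index. If $i$ were transient, then $\ms V^{(p)}_i\subset\ms T^{(p)}\subset\Delta_q$, which contradicts $\ms V^{(p)}_i\subset\ms V^{(q)}$ together with $\ms V^{(q)}\cap\Delta_q=\varnothing$. Hence $i\in\mf R^{(p)}_m$ for some $m\in S_{p+1}$. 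Now suppose, for contradiction, that $r^{(p)}(i,k)>0$. Since $\mf R^{(p)}_m$ is a closed irreducible class of $\bb X^{(p)}_t$ and $i\in\mf R^{(p)}_m$, this forces $k\in\mf R^{(p)}_m$, so $\ms V^{(p)}_k\subset\ms V^{(p+1)}_m$. But $\ms V^{(p)}_i\subset\ms V^{(p+1)}_m$ as well, and $\ms V^{(p)}_i\subset\ms V^{(q)}$; by the level-$(p+1)$ dichotomy, the single cell $\ms V^{(p+1)}_m$ must then lie entirely in $\ms V^{(q)}$, whence $\ms V^{(p)}_k\subset\ms V^{(q)}$, contradicting the hypothesis $\ms V^{(p)}_k\not\subset\ms V^{(q)}$. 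Therefore $r^{(p)}(i,k)=0$.

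The argument is essentially bookkeeping about the nested partitions, so I expect no serious obstacle. The only points that need care are the monotonicity $\Delta_{p+1}\subset\Delta_q$ and the cell-wise dichotomy: together they translate the hypotheses ``$\ms V^{(p)}_i\subset\ms V^{(q)}$'' and ``$\ms V^{(p)}_k\not\subset\ms V^{(q)}$'' into the statements that $i$ survives as a recurrent index while $k$ has been absorbed into the transient set before level $q$, after which the closedness of recurrent classes of $\bb X^{(p)}_t$ closes the proof.
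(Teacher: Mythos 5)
Your proof is correct, but it takes a genuinely different route from the paper's. You argue entirely at the level of the limiting chain $\bb X^{(p)}_t$: since $\ms V^{(p)}_i$ survives into $\ms V^{(q)}$, the index $i$ must be recurrent for $\bb X^{(p)}_t$ (otherwise $\ms V^{(p)}_i\subset \ms T^{(p)}\subset \Delta_q$); then $r^{(p)}(i,k)>0$ would place $k$ in the closed recurrent class $\mf R^{(p)}_m$ of $i$, forcing $\ms V^{(p)}_k\subset\ms V^{(p+1)}_m\subset\ms V^{(q)}$ by the nesting of the partitions, a contradiction. This is essentially the same soft argument the paper uses for Lemma \ref{l21}, transplanted to the setting of Lemma \ref{l20}, and it rests on the two facts you name (both used freely in the paper): the limits $r^{(p)}$ in \eqref{o-34} are the jump rates of $\bb X^{(p)}_t$, and recurrent classes of a finite chain are closed. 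The paper's proof of Lemma \ref{l20} instead works at finite $n$: it locates the level $p'\in[p,q)$ at which the ancestor cell $\ms V^{(p')}_\ell\supset\ms V^{(p)}_k$ first becomes transient, invokes closedness only at that level to get $r^{(p')}(m,\ell)=0$, and then transfers this vanishing down to level $p$ through the hitting-time inclusion $\{H_{\ms V^{(p)}_k}=H^+_{\ms V^{(p)}}\}\subset\{H_{\ms V^{(p')}_\ell}=H^+_{\ms V^{(p')}}\}$, the comparability of $\pi_n$-weights from \eqref{o-58}, and $\theta^{(p')}_n\ge\theta^{(p)}_n$, in effect proving the quantitative domination $\theta^{(p)}_n\, r^{(p)}_n(i,k)\le C\,\theta^{(p')}_n\, r^{(p')}_n(m,\ell)$. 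Your approach buys brevity and transparency (pure bookkeeping on the tree plus a textbook fact about recurrent classes); the paper's buys a finite-$n$ comparison between trace rates at different levels of the hierarchy, which is sharper information than the limiting statement alone, at the cost of the probabilistic estimates.
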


\begin{proof}
As $\ms V^{(p)}_k \subset \ms V^{(p)}$ and
$\ms V^{(p)}_k \not\subset \ms V^{(q)}$, by the tree construction
there exists $p\le p'<q$ such that
$\ms V^{(p)}_k \subset \ms V^{(p')}$ and
$\ms V^{(p)}_k \not\subset \ms V^{(p'+1)}$. Since
$\ms V^{(p)}_k \subset \ms V^{(p')}$, there exists $\ell\in S_{p'}$
such that $\ms V^{(p)}_k \subset \ms V^{(p')}_\ell$. As
$\ms V^{(p)}_k \not\subset \ms V^{(p'+1)}$, $\ell$ is a transient
state for the Markov chain $\bb X^{(p')}_t$.

On the other hand, as $\ms V^{(p)}_i \subset \ms V^{(q)}$ and
$\ms V^{(q)} \subset \ms V^{(p')}$,
$\ms V^{(p)}_i \subset \ms V^{(p')}$. Thus, there exists
$m \in S_{p'}$ such that $\ms V^{(p)}_i \subset \ms V^{(p')}_m$. As
$\ms V^{(p)}_i \subset \ms V^{(q)} \subset \ms V^{(p'+1)}$, $m$ is a
recurrent state for the Markov chain $\bb X^{(p')}_t$. In particular,
$m\not = \ell$.

As $m$ is recurrent and $\ell$ transient for the Markov chain $\bb
X^{(p')}_t$, $r^{(p')}(m,\ell)=0$. Thus, by \eqref{o-34} and
\eqref{20},
\begin{equation*}
0\;=\; \lim_{n\to\infty} \theta^{(p')}_n
\, r^{(p')}_n(m, \ell)
\;=\; \lim_{n\to\infty} \theta^{(p')}_n \,
\frac{1}{\pi_n(\ms V^{(p')}_m)}
\sum_{x \in\ms V^{(p')}_m} \pi_n(x) 
\, R^{(p')}_n(x,\ms V^{(p')}_\ell)\;.
\end{equation*}
By \eqref{o-40}, for $n$ fixed the expression on right-hand side is
equal to
\begin{equation}
\label{21}
\theta^{(p')}_n \, \frac{1}{\pi_n(\ms V^{(p')}_m)}
\sum_{x \in\ms V^{(p')}_m} \pi_n(x) \, \lambda_n(x)
\, \mb P^n_{\! x} \big[ H_{\ms V^{(p')}_\ell}
= H^+_{\ms V^{(p')}} \big] \;.
\end{equation}

Since $\ms V^{(p)}_k \subset \ms V^{(p')}_\ell$ and
$\ms V^{(p)} \supset \ms V^{(p')}$,
\begin{equation*}
\mb P^n_{\! x} \big[ H_{\ms V^{(p)}_k} = H^+_{\ms V^{(p)}} \big]
\;\le \;
 \mb P^n_{\! x} \big[ H_{\ms V^{(p')}_\ell}
= H^+_{\ms V^{(p')}} \big] \;. 
\end{equation*}
Hence, as $\theta^{(p')}_n \ge \theta^{(p)}_n$ and $\ms V^{(p')}_m
\supset \ms V^{(p)}_i$, by \eqref{o-58}, \eqref{21} is bounded below by
\begin{equation*}
c_0\, \theta^{(p)}_n \, 
\sum_{x \in\ms V^{(p)}_i}  \lambda_n(x)
\, \mb P^n_{\! x} \big[ H_{\ms V^{(p)}_k}
= H^+_{\ms V^{(p)}} \big] 
\end{equation*}
for some positive constant $c_0$.
This expression is clearly bounded below by
\begin{equation*}
c_0\, \theta^{(p)}_n \, \frac{1}{\pi_n(\ms V^{(p)}_i)}
\sum_{x \in\ms V^{(p)}_i} \pi_n(x) \, \lambda_n(x)
\, \mb P^n_{\! x} \big[ H_{\ms V^{(p)}_k}
= H^+_{\ms V^{(p)}} \big] \;=\;
c_0\, r^{(p)}_n (i,k) \;.
\end{equation*}
Collecting the previous estimates yields that this expression vanishes
as $n\to\infty$, as claimed. 
\end{proof}

For $1\le p < q\le \mf q$ and $i \not = j\in S_p$. Assume that
$\ms V^{(p)}_i$ and $\ms V^{(p)}_j$ are contained in $\ms V^{(q)}$:
$\ms V^{(p)}_i \cup \ms V^{(p)}_j \subset \ms V^{(q)}$. Let
\begin{equation*}
{\color{blue}  r^{p,q}_n(i,j)}
\; := \; \frac{1}{\pi_n(\ms V^{(p)}_i)}
\sum_{x \in\ms V^{(p)}_i} \pi_n(x) 
\sum_{y\in\ms V^{(p)}_j} R^{(q)}_n(x,y) \;.
\end{equation*}
The difference with respect to $r^{p}_n(i,j)$ is that we replaced
$R^{(p)}_n(x,y)$ by $R^{(q)}_n(x,y)$, that is, the trace on
$\ms V^{(p)}$ by the one on the smaller set $\ms V^{(q)}$.

\begin{corollary}
\label{l19}
Fix $1\le p < q\le \mf q$, $i \not = j\in S_p$. Assume that there
exists $m\in S_q$ such that
$\ms V^{(p)}_i \cup \ms V^{(p)}_j \subset \ms V^{(q)}_m$. Then, 
\begin{equation*}
\lim_{n\to\infty} \theta^{(p)}_n
\, r^{p,q}_n(i, j) \;=\; r^{(p)} (i,j) \;.
\end{equation*}
\end{corollary}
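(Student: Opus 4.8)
The plan is to compare $r^{p,q}_n(i,j)$ with $r^{(p)}_n(i,j)$ directly and to show that, after multiplication by $\theta^{(p)}_n$, their difference vanishes; the conclusion then follows from \eqref{o-34}, which identifies $\lim_n \theta^{(p)}_n r^{(p)}_n(i,j)$ with $r^{(p)}(i,j)$. Since $q>p$, the tree construction gives $\ms V^{(q)} \subset \ms V^{(p)}$. I would first establish, for $x$, $y\in \ms V^{(q)}$, the first-return decomposition
\begin{equation*}
R^{(q)}_n(x,y) \;=\; R^{(p)}_n(x,y) \;+\; \lambda_n(x)\, \mb P^n_{\! x}\big[\, H_y = H^+_{\ms V^{(q)}}\,,\; H^+_{\ms V^{(p)}} < H^+_{\ms V^{(q)}} \,\big]\;,
\end{equation*}
obtained from \eqref{o-40} by comparing $\{H_y = H^+_{\ms V^{(q)}}\}$ with $\{H_y = H^+_{\ms V^{(p)}}\}$: since $y\in\ms V^{(q)}\subset\ms V^{(p)}$, the latter event is contained in the former, and on the difference the first return to $\ms V^{(p)}$ occurs strictly before the first return to $\ms V^{(q)}$, hence at a point of $\ms V^{(p)}\setminus\ms V^{(q)}$. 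In particular the excess $R^{(q)}_n(x,y) - R^{(p)}_n(x,y)$ is nonnegative.

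Next I would sum over $y\in \ms V^{(p)}_j$. Because $\ms V^{(p)}_j \subset \ms V^{(q)}_m \subset \ms V^{(q)}$, the events $\{H_y = H^+_{\ms V^{(q)}}\}$, $y\in\ms V^{(p)}_j$, are disjoint; all of them are moreover contained in $\{H^+_{\ms V^{(p)}} < H^+_{\ms V^{(q)}}\}$, which is exactly the event that the first return to $\ms V^{(p)}$ lands in $\ms V^{(p)}\setminus\ms V^{(q)}$. Using \eqref{o-40} once more to sum that probability over the landing point gives
\begin{equation*}
0 \;\le\; \sum_{y\in\ms V^{(p)}_j} \big[\, R^{(q)}_n(x,y) - R^{(p)}_n(x,y)\,\big] \;\le\; R^{(p)}_n\big(x,\ms V^{(p)}\setminus\ms V^{(q)}\big)\;.
\end{equation*}
Weighting by $\pi_n(x)/\pi_n(\ms V^{(p)}_i)$, summing over $x\in\ms V^{(p)}_i$, multiplying by $\theta^{(p)}_n$, and writing $\ms V^{(p)}\setminus\ms V^{(q)}$ as the union of the classes $\ms V^{(p)}_\ell$ with $\ms V^{(p)}_\ell \not\subset \ms V^{(q)}$ (note $\ell\neq i$, since $\ms V^{(p)}_i\subset\ms V^{(q)}$), the definitions \eqref{20} and \eqref{o-34} give
\begin{equation*}
0 \;\le\; \theta^{(p)}_n \big[\, r^{p,q}_n(i,j) - r^{(p)}_n(i,j)\,\big] \;\le\; \sum_{\ell \,:\, \ms V^{(p)}_\ell \not\subset \ms V^{(q)}} \theta^{(p)}_n\, r^{(p)}_n(i,\ell)\;.
\end{equation*}

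Finally, each term on the right converges, by \eqref{o-34}, to $r^{(p)}(i,\ell)$, and Lemma \ref{l20} applies to every index $\ell$ in the sum, since $\ms V^{(p)}_i \subset \ms V^{(q)}$ while $\ms V^{(p)}_\ell \not\subset \ms V^{(q)}$; hence $r^{(p)}(i,\ell)=0$. The right-hand side therefore tends to $0$, so $\theta^{(p)}_n[\,r^{p,q}_n(i,j) - r^{(p)}_n(i,j)\,]\to 0$, and \eqref{o-34} yields $\lim_n \theta^{(p)}_n\, r^{p,q}_n(i,j) = r^{(p)}(i,j)$. The only genuinely probabilistic step—and the one I would expect to require the most care—is the first-return decomposition together with the identification of the excess rate with the ``wrong-direction'' flow $R^{(p)}_n(x,\ms V^{(p)}\setminus\ms V^{(q)})$ toward the classes that are transient on the way to level $q$; once that comparison is in place, Lemma \ref{l20} does the essential work, and the remaining steps are the bookkeeping already used in Lemmas \ref{l18} and \ref{l20}.
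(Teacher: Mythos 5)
Your proof is correct and follows essentially the same route as the paper: both identify the difference between the two mean rates with the contribution of trajectories whose first return to $\ms V^{(p)}$ lands in $\ms V^{(p)}\setminus\ms V^{(q)}$, and both show this contribution is negligible after multiplication by $\theta^{(p)}_n$ using Lemma \ref{l20} together with \eqref{o-34}. The only difference is cosmetic—you package the comparison as a sandwich inequality at the level of the jump rates $R^{(q)}_n$ versus $R^{(p)}_n$, while the paper decomposes the hitting-probability events appearing in \eqref{21b} directly.
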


\begin{proof}
By \eqref{o-34}, \eqref{20} and \eqref{o-40},
\begin{equation}
\label{21b}
r^{p}(i, j) \;=\; \lim_{n\to\infty}
\frac{\theta^{(p)}_n}{\pi_n(\ms V^{(p)}_i)}
\sum_{x \in\ms V^{(p)}_i} \pi_n(x) 
\lambda_n(x) \,
\mb P^n_{\! x} \big[ H_{\ms V^{(p)}_j}
= H^+_{\ms V^{(p)}} \big]\;. 
\end{equation}

Let $S_{p,q} := \{k\in S_p : \ms V^{(p)}_k \subset \ms V^{(q)}\}$,
$\ms V^{(p,q)} := \cup_{k\in S_{p,q}}\ms V^{(p)}_k$,
$\ms U^{(p,q)} := \ms V^{(p)} \setminus \ms V^{(q)}$. By Lemma
\ref{l20},
\begin{equation}
\label{22}
\lim_{n\to\infty}
\frac{\theta^{(p)}_n}{\pi_n(\ms V^{(p)}_i)}
\sum_{x \in\ms V^{(p)}_i} \pi_n(x) 
\lambda_n(x) \,
\mb P^n_{\! x} \big[ H_{\ms U^{(p,q)}}
= H^+_{\ms V^{(p)}} \big] \;=\; 0 \;. 
\end{equation}
Since $\ms V^{(p)} = \ms V^{(p,q)} \cup \ms U^{(p,q)}$,
$\ms V^{(p,q)} \cap \ms U^{(p,q)} = \varnothing$, the sets
$\{ H_{\ms U^{(p,q)}} = H^+_{\ms V^{(p)}} \}$ and
$\{ H^+_{\ms V^{(q)}} = H^+_{\ms V^{(p)}} \}$ form a partition of the
space. Decomposing the event appearing in \eqref{21b} according to this
partition, by \eqref{22},
\begin{equation*}
r^{p}(i, j) \;=\; \lim_{n\to\infty}
\frac{\theta^{(p)}_n}{\pi_n(\ms V^{(p)}_i)}
\sum_{x \in\ms V^{(p)}_i} \pi_n(x) 
\lambda_n(x) \,
\mb P^n_{\! x} \big[ H_{\ms V^{(p)}_j}
= H^+_{\ms V^{(q)}} \,,\, H^+_{\ms V^{(q)}}= H^+_{\ms V^{(p)}}
\big]\;. 
\end{equation*}
By \eqref{22} once more,
\begin{equation*}
r^{p}(i, j) \;=\; \lim_{n\to\infty}
\frac{\theta^{(p)}_n}{\pi_n(\ms V^{(p)}_i)}
\sum_{x \in\ms V^{(p)}_i} \pi_n(x) 
\lambda_n(x) \,
\mb P^n_{\! x} \big[ H_{\ms V^{(p)}_j}
= H^+_{\ms V^{(q)}} \big]  \;. 
\end{equation*}
By definition, the right-hand side is $\lim_{n\to\infty}
\theta^{(p)}_n \,r^{p,q}_n(i, j)$, which completes the proof of the
lemma. 
\end{proof}

\begin{lemma}
\label{l21}
Fix $1\le p < q\le \mf q$.  Then,
\begin{equation*}
r^{(p)}(i,k) \;=\;  0 
\end{equation*}
for all $k \neq i \in S_p$ such that
$\ms V^{(p)}_i \subset \ms V^{(q)}_a$,
$\ms V^{(p)}_k\subset \ms V^{(q)}_b$ for some $a\neq b\in S_q$.
\end{lemma}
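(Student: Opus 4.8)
The plan is to dominate the level-$p$ rate $r^{(p)}(i,k)$ by the level-$q$ rate $r^{(q)}(a,b)$ multiplied by the vanishing ratio $\theta^{(p)}_n/\theta^{(q)}_n$, using that a jump of the trace process from $\ms V^{(p)}_i$ to $\ms V^{(p)}_k$ forces the underlying chain to cross from $\ms V^{(q)}_a$ to $\ms V^{(q)}_b$, a transition that is only visible at the slower time-scale $\theta^{(q)}_n \succ \theta^{(p)}_n$. This lemma is the complementary case to Lemma \ref{l20}, which handled $\ms V^{(p)}_k \not\subset \ms V^{(q)}$.

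Recall from the tree construction that $\ms V^{(q)} \subset \ms V^{(p)}$, and that by hypothesis $\ms V^{(p)}_i \subset \ms V^{(q)}_a$ and $\ms V^{(p)}_k \subset \ms V^{(q)}_b$, so that $\ms V^{(p)}_k \subset \ms V^{(q)}_b \subset \ms V^{(q)} \subset \ms V^{(p)}$. Fix $x \in \ms V^{(p)}_i \subset \ms V^{(q)}_a$. The first step is to establish the $\mb P^n_{\! x}$-almost sure event inclusion
\[
\big\{\, H_{\ms V^{(p)}_k} \,=\, H^+_{\ms V^{(p)}} \,\big\}
\;\subset\;
\big\{\, H_{\ms V^{(q)}_b} \,=\, H^+_{\ms V^{(q)}} \,\big\}\;.
\]
On the event on the left, the chain avoids $\ms V^{(p)}$ --- hence $\ms V^{(q)}$ --- after its first jump and until it returns to $\ms V^{(p)}$ at a point of $\ms V^{(p)}_k$; since that return point already belongs to $\ms V^{(q)}_b \subset \ms V^{(q)}$, the first return to $\ms V^{(q)}$ occurs at the very same instant and inside $\ms V^{(q)}_b$, which is the event on the right. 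Together with the representation \eqref{o-40} of the trace jump rates, this gives, for every $x \in \ms V^{(p)}_i$,
\[
\lambda_n(x)\, \mb P^n_{\! x}\big[\, H_{\ms V^{(p)}_k} = H^+_{\ms V^{(p)}} \,\big]
\;\le\; R^{(q)}_n(x, \ms V^{(q)}_b)\;.
\]

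Next I would multiply by $\pi_n(x)$, sum over $x \in \ms V^{(p)}_i$, and enlarge the domain of summation to $\ms V^{(q)}_a \supset \ms V^{(p)}_i$ so as to recognize the definition of $r^{(q)}_n(a,b)$, arriving at
\[
\theta^{(p)}_n\, r^{(p)}_n(i,k)
\;\le\;
\frac{\pi_n(\ms V^{(q)}_a)}{\pi_n(\ms V^{(p)}_i)}\;
\frac{\theta^{(p)}_n}{\theta^{(q)}_n}\;
\theta^{(q)}_n\, r^{(q)}_n(a,b)\;.
\]
Letting $n\to\infty$ then finishes the argument: by \eqref{o-58} the prefactor $\pi_n(\ms V^{(q)}_a)/\pi_n(\ms V^{(p)}_i)$ converges to the finite positive number $1/\sum_{z\in \ms V^{(p)}_i}\pi^{(q)}_a(z)$; by \eqref{o-34} the factor $\theta^{(q)}_n\, r^{(q)}_n(a,b)$ converges to $r^{(q)}(a,b) < \infty$; and since $p<q$ one has $\theta^{(p)}_n \prec \theta^{(q)}_n$, whence $\theta^{(p)}_n/\theta^{(q)}_n \to 0$. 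The right-hand side therefore vanishes, forcing $r^{(p)}(i,k)=0$.

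The only delicate point is the event inclusion of the first step: one must check that the first return of the trace on $\ms V^{(p)}$ into $\ms V^{(p)}_k$ really coincides with a first return of the full chain into $\ms V^{(q)}_b$. This is exactly where the nesting $\ms V^{(p)}_k \subset \ms V^{(q)}_b \subset \ms V^{(q)} \subset \ms V^{(p)}$ and the fact that $x \in \ms V^{(q)}$ enter, and one should treat with a little care the borderline case $H^+_{\ms V^{(p)}} = \tau_1$ in which the chain jumps directly into $\ms V^{(p)}_k$. All remaining manipulations are the same $\pi_n$-weighting and time-scale bookkeeping already used in Lemma \ref{l20} and Corollary \ref{l19}.
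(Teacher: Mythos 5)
Your proof is correct, but it follows a genuinely different route from the paper's. Your argument is quantitative: the event inclusion $\{H_{\ms V^{(p)}_k} = H^+_{\ms V^{(p)}}\} \subset \{H_{\ms V^{(q)}_b} = H^+_{\ms V^{(q)}}\}$ (valid precisely because $\ms V^{(q)}\subset\ms V^{(p)}$ and $\ms V^{(p)}_k\subset\ms V^{(q)}_b$, and still valid in the borderline case $H^+_{\ms V^{(p)}}=\tau_1$ that you flag) combined with \eqref{o-40} gives $R^{(p)}_n(x,\ms V^{(p)}_k)\le R^{(q)}_n(x,\ms V^{(q)}_b)$ for $x\in\ms V^{(p)}_i$, and then the $\pi_n$-weighting, \eqref{o-58}, \eqref{o-34} and the separation $\theta^{(p)}_n\prec\theta^{(q)}_n$ force the limit to vanish; this is exactly the toolkit the paper deploys for the complementary case in Lemma \ref{l20} and in Corollary \ref{l19}, and it even yields the quantitative bound $\theta^{(p)}_n\, r^{(p)}_n(i,k) = O\big(\theta^{(p)}_n/\theta^{(q)}_n\big)$. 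The paper's own proof of Lemma \ref{l21} is instead purely structural and involves no estimate at all: since $\ms V^{(p)}_i\cup\ms V^{(p)}_k\subset\ms V^{(q)}\subset\ms V^{(p+1)}$, both $i$ and $k$ must be recurrent states of the limit chain $\bb X^{(p)}_t$ (sets indexed by transient states are absorbed into $\Delta_{p+1}$); if $r^{(p)}(i,k)>0$, two recurrent states joined by a positive jump rate would lie in the same closed irreducible class, so $\ms V^{(p)}_i\cup\ms V^{(p)}_k\subset\ms V^{(p+1)}_m$ for some $m\in S_{p+1}$, hence inside a single set $\ms V^{(q)}_c$ by the tree nesting, contradicting $a\neq b$. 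The structural proof is shorter but leans on identifying the numbers $r^{(p)}(\cdot,\cdot)$ as the jump rates of $\bb X^{(p)}_t$ and on the recurrent/transient classification from the tree construction; your proof avoids that classification entirely, works directly with the pre-limit quantities, and unifies Lemmata \ref{l20} and \ref{l21} under a single comparison technique, at the cost of redoing the bookkeeping of Lemma \ref{l20}.
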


\begin{proof}
Fix $1\le p < q\le \mf q$, and $k \neq i \in S_p$ such that
$\ms V^{(p)}_i \subset \ms V^{(q)}_a$,
$\ms V^{(p)}_k\subset \ms V^{(q)}_b$ for some $a\neq b\in S_q$.
Both states $i$ and $k$ are recurrent for the chain $\bb X^{(p)}_t$
because if one of them was transient it would not belong to $\ms
V^{(p+1)}\supset \ms V^{(q)}$.

Suppose by contradiction that $r^{(p)}(i,k)>0$. Hence, since both
states are recurrent, they belong to the same irreducible class. In
particular, there exists $m\in S_{p+1}$ such that
$\ms V^{(p)}_i \cup \ms V^{(p)}_k\subset \ms V^{(p+1)}_m$, so that
$\ms V^{(p)}_i \cup \ms V^{(p)}_k\subset \ms V^{(q)}_c$ for some
$c \in S_q$, in contradictions with the hypotheses.
\end{proof}

\section{$\Gamma-\limsup$ of the trace}
\label{sec3}

The main result of this section, Proposition \ref{l16}, states that
$\ms I^{(p)}$ is a $\Gamma-\limsup$ for the sequence
$\theta^{(p)}_n \ms I^{(p)}_n$. Here, $\ms I^{(p)}_n$ stands for the
large deviations rate functionals of the trace processes $Y^{n,p}_t$.
We assume below that the reader is familiar with the notation and
results presented in the appendix.

Fix $1\le p\le \mf q$.  Denote by
$\ms I^{(p)}_n \colon \ms P(\ms V^{(p)}) \to [0,+\infty)$ the
occupation time large deviations rate functional of the trace process
$Y^{n,p}_t$:
\begin{equation}
\label{eq:2}
{\color{blue}  \ms I^{(p)}_n (\mu)} \;:=\;
\sup_{H} \,-\,  \sum_{x\in \ms V^{(p)}} \mu(x)\,
e^{- H(x) } \, [\, (\mf T_{\ms V^{(p)}} \ms L_n) \, e^{H})\,]
(x)  \;,
\end{equation}
where the supremum is carried over all functions
$H\colon \ms V^{(p)}\to \bb R$ and $\mf T_{\ms V^{(p)}} \ms L_n$,
introduced and examined in Appendix \ref{sec-a3}, is the generator of
the trace process $Y^{n,p}_t$. The main result of this section reads
as follows.

\begin{proposition}
\label{l16}
For all $\mu\in \ms P(\ms V^{(p)})$, there exists a sequence of
measures $\mu_n \in \ms P(\ms V^{(p)})$ such that $\mu_n\to \mu$ and
\begin{equation*}
\limsup_{n\to\infty} \theta^{(p)}_n\, \ms I^{(p)}_n(\mu_n) \;\le\; \ms
I^{(p)}(\mu)\;.  
\end{equation*}
\end{proposition}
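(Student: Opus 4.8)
The goal is to produce, for each target measure $\mu \in \ms P(\ms V^{(p)})$, a recovery sequence $\mu_n \to \mu$ along which $\theta^{(p)}_n \ms I^{(p)}_n(\mu_n)$ stays below $\ms I^{(p)}(\mu)$ in the limit superior. Since $\ms I^{(p)}(\mu) = +\infty$ unless $\mu$ is a convex combination $\sum_{j\in S_p}\omega_j\,\pi^{(p)}_j$ of the metastable measures, the inequality is vacuous for all other $\mu$, and I may take $\mu_n=\mu$ in that case. So the whole content is the case $\mu=\sum_{j}\omega_j\,\pi^{(p)}_j$ with $\omega\in\ms P(S_p)$, where the bound to beat is $\bb I^{(p)}(\omega)$, the level-two rate functional of the reduced chain $\bb X^{(p)}_t$.

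The plan is to reduce the problem on $\ms V^{(p)}$ to the reduced chain on $S_p$ by further projecting the trace process. First I would fix the optimizer structure of $\bb I^{(p)}(\omega)$: by the variational formula \eqref{40} there is (or nearly is) a function $\mb h\colon S_p\to\bb R$ that realizes the supremum up to $\epsilon$, giving $-\sum_{j}\omega_j\,e^{-\mb h(j)}(\bb L^{(p)}e^{\mb h})(j)\ge \bb I^{(p)}(\omega)-\epsilon$. I would then lift $\mb h$ to a function $H_n\colon\ms V^{(p)}\to\bb R$ constant on each block $\ms V^{(p)}_j$, namely $H_n(x)=\mb h(j)$ for $x\in\ms V^{(p)}_j$, and use it as a test function in the supremum \eqref{eq:2} defining $\ms I^{(p)}_n$. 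Because $\ms I^{(p)}_n(\mu)$ is a supremum over $H$, any single choice of test function produces a lower bound for $\ms I^{(p)}_n$, which is the wrong direction; so instead I would exploit the dual/contraction structure. The clean route is: the functional $\mu\mapsto\ms I^{(p)}_n(\mu)$ and the rate functional of the $S_p$-valued projection $\Phi_p(Y^{n,p}_t)$ are related by the contraction principle, and the projected jump rates are exactly the $r^{(p)}_n(i,j)$ of \eqref{20}. By \eqref{o-34} these satisfy $\theta^{(p)}_n\,r^{(p)}_n(i,j)\to r^{(p)}(i,j)$, the jump rates of $\bb X^{(p)}_t$. Thus the time-rescaled projected chain has rate functional converging to $\bb I^{(p)}$.

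Concretely, I would choose $\mu_n$ to be the measure with the correct block masses $\omega_j$ but with the \emph{internal} conditional distribution on each block $\ms V^{(p)}_j$ set equal to $\pi_n(\cdot)/\pi_n(\ms V^{(p)}_j)$ rather than to the limiting $\pi^{(p)}_j$. By \eqref{o-58} these conditional laws converge to $\pi^{(p)}_j$, so $\mu_n\to\mu$. The advantage of using the finite-$n$ stationary conditional law is that it makes the test function $H_n$ (constant on blocks) \emph{harmonic for the internal dynamics}, so that when I expand $-\sum_x\mu_n(x)e^{-H_n}(\mf T_{\ms V^{(p)}}\ms L_n e^{H_n})(x)$ the intra-block contributions cancel in the stationary average and only the inter-block rates survive. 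What remains is precisely $\theta^{(p)}_n$ times a sum over $i\neq j$ of terms of the form $\omega_i\,\theta^{(p)}_n\,r^{(p)}_n(i,j)\,[\,\ldots e^{\mb h(j)-\mb h(i)}\ldots\,]$, which by \eqref{o-34} converges to the analogous expression built from $r^{(p)}(i,j)$ and $\mb h$. Since $H_n$ is block-constant it is an admissible test function, and taking the supremum over the lifted $\mb h$'s recovers $\bb I^{(p)}(\omega)$ in the limit. Passing to the $\limsup$ and letting $\epsilon\to0$ gives the claimed bound. I would need Lemma \ref{l35} here to guarantee the reflected internal dynamics is irreducible, so that the conditional stationary law is well-defined and the harmonicity cancellation is exact.

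The main obstacle I anticipate is controlling the off-block terms uniformly: the rates $\theta^{(p)}_n R^{(p)}_n(x,y)$ for $x\in\ms V^{(p)}_i$, $y\in\ms V^{(p)}_j$ need to be bounded and to aggregate correctly into $r^{(p)}(i,j)$, not merely to converge after summation. This is exactly what Lemmata \ref{l17}, \ref{l34} and Corollary \ref{l19} are designed to supply, so the real work is bookkeeping: showing that replacing the exact trace generator $\mf T_{\ms V^{(p)}}\ms L_n$ evaluated against the block-constant $H_n$ by its coarse-grained version introduces only errors that vanish after multiplication by $\theta^{(p)}_n$ and passage to the limit, using the vanishing of $\pi_n(\Delta_{p})$ and the boundedness estimates. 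A secondary subtlety is that $\ms I^{(p)}_n$ is defined via a supremum over \emph{all} $H$, whereas I only control block-constant test functions; but since I am proving a $\Gamma$-$\limsup$ (an upper bound on $\ms I^{(p)}_n(\mu_n)$, not a lower bound), I must instead bound $\ms I^{(p)}_n(\mu_n)$ from above by exhibiting $\mu_n$ as a near-minimizer, which is handled by the variational duality between the rate functional and the relative entropy of paths — I would invoke the appendix results (the contraction/projection lemmas of Appendix \ref{sec-a3}) to reduce $\ms I^{(p)}_n(\mu_n)$ to the rate functional of the projected chain and only then use convergence of rates.
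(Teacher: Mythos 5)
Your proposal correctly identifies the framing of the paper's proof (reduction to convex combinations $\mu=\sum_j\omega_j\,\pi^{(p)}_j$, block-constant lifts of test functions, and the role of the rate-convergence estimates in Lemmata \ref{l17}, \ref{l34} and Corollary \ref{l19}), and you correctly flag the central difficulty: $\ms I^{(p)}_n$ is a supremum over \emph{all} test functions, so exhibiting one block-constant $H$ only bounds it from below. But the mechanism you propose to overcome this does not work. For finite $n$ the projection $\Phi_p(Y^{n,p}_t)$ is not a Markov chain, so it has no jump rates and no rate functional of the form \eqref{40}; the quantities $r^{(p)}_n(i,j)$ in \eqref{20} are stationary averages, not Markov rates. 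Moreover, even granting an LDP for the projected empirical measure, the contraction principle expresses its rate functional as an infimum over liftings, which bounds $\ms I^{(p)}_n$ at a lifted measure from \emph{below} by the projected functional --- again the wrong direction. The second tool you invoke, a ``variational duality with the relative entropy of paths,'' is not available from Appendix \ref{sec-a3}: Proposition \ref{l05} there compares the trace functional with the functional of the full chain on $V$, not with any $S_p$-projection.

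The paper resolves the supremum-direction problem by a different, and essential, device that is absent from your plan. The recovery sequence is not ``$\omega_j$ times $\pi_n$ conditioned to $\ms V^{(p)}_j$.'' Instead, for each non-singleton equivalence class $\mf D^{(p)}_a$ of the \emph{limiting} chain $\bb X^{(p)}_t$, one takes the optimizer $\bs h_a$ of $I_{\bb L^{(p)}_a}(\omega^{(a)})$ given by Lemma \ref{l02}, lifts it to the block-constant function $H_a$ of \eqref{33}, and defines $\mu^{p,a}_n$ as the \emph{exact stationary state} of the tilted reflected trace generator $\mf M_{H_a}\,\mf R_{\ms W^{(p)}_a}\,\mf T_{\ms V^{(p)}}\,\ms L_n$; the recovery sequence is the convex combination \eqref{26} of these pieces. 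Because each piece is an exact stationary state of a tilted dynamics, Lemma \ref{l10} turns the supremum over all $H$ into the explicit value $J_{H_a}(\mu^{p,a}_n)$ --- no supremum remains --- and Lemma \ref{l04} (the identity $I=K$, applied after the convexity splitting \eqref{32} to each piece, whose support is a strict subset of $\ms V^{(p)}$) isolates the inter-block escape terms, which are then handled by the rate lemmas. Your choice of $\mu_n$ breaks this mechanism: for non-reversible chains, $\pi_n$ conditioned to a block is \emph{not} stationary for the reflected intra-block dynamics, so the intra-block contribution to $\sup_H J_H(\mu_n)$ does not vanish exactly, and you would need a new quantitative estimate showing it is $o(1/\theta^{(p)}_n)$ --- precisely the estimate the paper's construction is designed to avoid. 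Relatedly, the convergence $\mu^{p,a}_n\to\sum_{j\in\mf D^{(p)}_a}\omega^{(a)}_j\,\pi^{(p)}_j$ (Lemma \ref{l12}, the longest and most delicate part of the paper's argument) has no counterpart in your plan: you impose the limiting weights by hand, but in doing so you lose the exactness that makes the upper bound provable.
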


The proof of Proposition \ref{l16} is divided in several lemmata.  Fix
$1\le p\le \mf q$ and a measure $\mu \in \ms P(\ms V^{(p)})$ which can
be represented as $\mu = \sum_{j\in S_p} \omega_j \, \pi^{(p)}_j$ for
some $\omega\in \ms P(S_p)$ such that $\omega_j >0$ for all
$j\in S_p$.  We first construct a sequence
$\mu_n \in \ms P(\ms V^{(p)})$ which converges to $\mu$.

Denote by $\color{blue} \mf Q^{(p)}_a$, $1\le a \le \ell_p$, the
equivalent classes of the Markov chain $\bb X^{(p)}_t$, by
$\color{blue} \mf D^{(p)}_a$, $1\le a \le m_{p}$, the ones which are
not singletons, and by $\color{blue} S^{\rm sgl}_p$ the set of states
$j\in S_p$ such that $\{j\}$ is an equivalent class. Clearly,
\begin{equation}
\label{27}
S_p \;=\;
\bigcup_{a=1}^{\ell_p} \mf Q^{(p)}_a
\;=\; S^{\rm sgl}_p \;\cup\; \bigcup_{a=1}^{m_p} \mf D^{(p)}_a\;. 
\end{equation}

Recall from Appendix \ref{sec01} the definition of a Markov chain
reflected at a set and the notation used to represent its generator.
Denote by $\bb L^{(p)}_a$, $1\le a\le m_p$, the generator
$\bb L^{(p)}$ reflected at $\mf D^{(p)}_a$:
$\color{blue} \bb L^{(p)}_a := \mf R_{\mf D^{(p)}_a}\, \bb L^{(p)}$.
As $\omega_j>0$ for all $j\in \mf D^{(p)}_a$, by Lemma \ref{l02},
there exists $\color{blue} \bs h_a \colon \mf D^{(p)}_a \to \bb R$
which solves the optimal problem \eqref{19} for
$I_{\bb L^{(p)}_a} (\omega) $.

Let
$\color{blue} \ms W^{(p)}_a := \cup_{j\in \mf D^{(p)}_a} \ms
V^{(p)}_j$, $1\le \ell\le m_p$. The generator of the trace process
$Y^{n,p}_t$ reflected at the set $\ms W^{(p)}_a$ is denoted by
$\mf R_{\ms W^{(p)}_a} \mf T_{\ms V^{(p)}}\, \ms L_n$.

\begin{lemma}
\label{l36}
The Markov chain associated to the generator
$\mf R_{\ms W^{(p)}_a} \mf T_{\ms V^{(p)}}\, \ms L_n$ is irreducible.
\end{lemma}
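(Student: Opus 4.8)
The statement to be proved is that the trace process $Y^{n,p}_t$ reflected at the set $\ms W^{(p)}_a = \cup_{j\in \mf D^{(p)}_a} \ms V^{(p)}_j$ is irreducible. Since reflection at a set (recall the definition in \eqref{54}) leaves the jump rates between interior points unchanged, this amounts to showing that for every $x$, $y\in \ms W^{(p)}_a$ there exists a path $(x=z_0, z_1, \dots, z_\ell = y)$ with $z_i\in \ms W^{(p)}_a$ and $R^{(p)}_n(z_i,z_{i+1})>0$ for all $0\le i<\ell$ and all $n\ge 1$. The plan is to mimic the argument of Lemma \ref{l35}, the novelty being that the reflecting set is now a union of several blocks $\ms V^{(p)}_j$ indexed by the non-singleton equivalent class $\mf D^{(p)}_a$ of $\bb X^{(p)}_t$, rather than a single block.

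I would split the argument into two cases according to the decomposition $\ms W^{(p)}_a = \cup_{j\in \mf D^{(p)}_a} \ms V^{(p)}_j$. If $x$ and $y$ belong to the same block $\ms V^{(p)}_j$, then Lemma \ref{l35} already provides a path from $x$ to $y$ inside $\ms V^{(p)}_j \subset \ms W^{(p)}_a$ along which the rates $R^{(p)}_n$ are positive for every $n$, and we are done. If instead $x\in \ms V^{(p)}_j$ and $y\in \ms V^{(p)}_k$ with $j\neq k\in \mf D^{(p)}_a$, I would use that $\mf D^{(p)}_a$ is an equivalent class of $\bb X^{(p)}_t$ to produce a sequence $(j=j_0, j_1, \dots, j_r = k)$ with $j_s\in \mf D^{(p)}_a$ and $r^{(p)}(j_s,j_{s+1})>0$ for all $0\le s<r$. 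For each such edge, since $r^{(p)}(j_s,j_{s+1})>0$, formulas \eqref{o-34}, \eqref{20} together with \eqref{o-58} force the existence of points $x'_s\in \ms V^{(p)}_{j_s}$ and $y'_s\in \ms V^{(p)}_{j_{s+1}}$ with $R^{(p)}_n(x'_s,y'_s)>0$ for all large $n$; by \eqref{51} and \eqref{53} this positivity is in fact $n$-independent, hence holds for every $n\ge 1$.

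It then remains to stitch the pieces together. Within each block $\ms V^{(p)}_{j_s}$ I would invoke Lemma \ref{l35} to connect the entry point $y'_{s-1}$ to the exit point $x'_s$ by a path staying in $\ms V^{(p)}_{j_s}$ with positive $R^{(p)}_n$ rates, and then bridge consecutive blocks using the edges $(x'_s, y'_s)$. Concatenating the within-block paths and the bridging edges yields the desired path from $x$ to $y$. The point to watch—and the only real content beyond a verbatim repetition of Lemma \ref{l35}—is that the whole path remains inside $\ms W^{(p)}_a$: this is exactly guaranteed because the connecting sequence $j_0, \dots, j_r$ lies entirely in the equivalent class $\mf D^{(p)}_a$, so that every block $\ms V^{(p)}_{j_s}$ visited is a subset of $\ms W^{(p)}_a$. (When $r>1$ one proceeds along the chain $j_0, j_1, \dots, j_r$ edge by edge exactly as in the last paragraph of the proof of Lemma \ref{l35}, where the reduction to a single edge is spelled out.) This completes the proof.
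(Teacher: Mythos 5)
Your proof is correct and follows essentially the same route as the paper: the paper's own proof is a one-liner that invokes the fact that $\mf D^{(p)}_a$ is an equivalent class of $\bb X^{(p)}_t$ together with ``the argument presented in the proof of Lemma \ref{l35}'', which is exactly what you carry out — within-block connectivity from Lemma \ref{l35}, bridging edges from $r^{(p)}(j_s,j_{s+1})>0$ via \eqref{o-34}, \eqref{20}, \eqref{o-58}, with $n$-independence of positivity from \eqref{51} and \eqref{53}. Your write-up in fact supplies the details the paper leaves implicit, including the (standard, and worth noting) fact that a positive-rate path between two states of the communicating class $\mf D^{(p)}_a$ can be taken entirely inside that class.
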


\begin{proof}
Recall from \cite[Proposition 6.1]{bl2} that the
Markov chain induced by the trace generator
$\mf T_{\ms V^{(p)}}\, \ms L_n$ is irreducible.  Since $\mf D^{(p)}_a$
is an equivalent class for the Markov chain $\bb X^{(p)}_t$, the
argument presented in the proof of Lemma \ref{l35} yields that the
Markov chain induced by the reflected generator
$\mf R_{\ms W^{(p)}_a} \, \mf T_{\ms V^{(p)}}\, \ms L_n$ is also
irreducible. 
\end{proof}

Denote by $H_a\colon \ms W^{(p)}_a \to \bb R$ the function given by
\begin{equation}
\label{33}
{\color{blue} H_a}
\;=\; \sum_{j\in \mf D^{(p)}_a} \bs h_a(j)\, \chi_{\ms V^{(p)}_j} \;,
\end{equation}
where, recall, $\chi_{\ms A}$ stands for the indicator function of the
set $\ms A$. Recall from \eqref{56} the definition of a tilted
generator $\mf M_G\, L$, and consider the generator
$\mf M_{H_a}\, \mf R_{\ms W^{(p)}_a} \mf T_{\ms V^{(p)}}\, \ms L_n$.
Since tilting the generator does not affect its irreducibility, it
follows from the previous result that the Markov chain associated to
the generator
$\mf M_{H_a}\, \mf R_{\ms W^{(p)}_a} \mf T_{\ms V^{(p)}}\, \ms L_n$ is
also irreducible.  Denote by
$\color{blue} \mu^{p,a}_n \in \ms P(\ms W^{(p)}_a)$ its stationary
state.

\begin{lemma}
\label{l12}
The sequence of probability measures $\mu^{p,a}_n$ converges to
$\sum_{j\in \mf D^{(p)}_a} \omega^{(a)}_j \, \pi^{(p)}_j$, where
$\color{blue} \omega^{(a)}_j = \omega_j/ \Omega_a$,
$\color{blue} \Omega_a = \sum_{j\in \mf D^{(p)}_a} \omega_j$.
\end{lemma}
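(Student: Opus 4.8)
The plan is to identify the limit of the stationary measure $\mu^{p,a}_n$ of the tilted reflected trace generator $\ms G_n := \mf M_{H_a}\,\mf R_{\ms W^{(p)}_a}\,\mf T_{\ms V^{(p)}}\,\ms L_n$ by decomposing it according to the block partition $\ms W^{(p)}_a = \bigcup_{j\in\mf D^{(p)}_a}\ms V^{(p)}_j$. Write $c^n_j := \mu^{p,a}_n(\ms V^{(p)}_j)$, and let $\nu^n_j$ denote the conditional law $\mu^{p,a}_n(\,\cdot\,)/c^n_j$ on $\ms V^{(p)}_j$ when $c^n_j>0$; here $\mu^{p,a}_n$ is well defined and unique by the irreducibility established in Lemma \ref{l36}. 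Since $\ms P(\ms W^{(p)}_a)$ is compact, it suffices to show that every convergent subsequence has the same limit; so I fix a subsequence along which $c^n_j\to c_j$ for all $j$ and $\mu^{p,a}_n\to\mu^\star$, and I will prove that $\nu^n_j\to\pi^{(p)}_j$ whenever $c_j>0$, and that $c=\omega^{(a)}$.

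The first and hardest step concerns the conditional laws. The key observation is that $H_a$ is constant, equal to $\bs h_a(j)$, on each block $\ms V^{(p)}_j$; hence the restriction of $\ms G_n$ to transitions internal to a single block carries no tilting and coincides with the reflected trace generator, with rates $R^{(p)}_n(x,y)$, $x,y\in\ms V^{(p)}_j$. By Lemma \ref{l17}, the rates connecting distinct blocks are of order $1/\theta^{(p)}_n$, whereas the within-block dynamics relaxes on the strictly faster scale $\theta^{(p-1)}_n$ (on scale $1$ when $p=1$). A two-scale argument then shows that, whenever $c_j>0$, the conditional law $\nu^n_j$ is asymptotically the stationary measure of the within-block dynamics, which agrees to leading order with the trace process on $\ms V^{(p)}_j$ (the two differ only by excursion contributions of order $1/\theta^{(p)}_n$, negligible against the within-block relaxation). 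As the stationary measure of the trace on $\ms V^{(p)}_j$ is $\pi_n$ conditioned to $\ms V^{(p)}_j$, which converges to $\pi^{(p)}_j$ by \eqref{o-58}, we obtain $\nu^n_j\to\pi^{(p)}_j$. The separation of scales — the inter-block perturbation being negligible relative to the intra-block relaxation — is the main obstacle.

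I then turn to the weights. Fix $m\in\mf D^{(p)}_a$ and sum the stationarity identity $\mu^{p,a}_n\,\ms G_n=0$ over $z\in\ms V^{(p)}_m$; the internal fluxes cancel and leave the balance of the flux entering and leaving $\ms V^{(p)}_m$. Since the tilted rate from $\ms V^{(p)}_i$ into $\ms V^{(p)}_m$ equals $R^{(p)}_n(x,\ms V^{(p)}_m)\,e^{\bs h_a(m)-\bs h_a(i)}$, this reads
\begin{equation*}
\sum_{i\neq m} e^{\bs h_a(m)-\bs h_a(i)} \sum_{x\in\ms V^{(p)}_i} \mu^{p,a}_n(x)\, R^{(p)}_n(x,\ms V^{(p)}_m)
\;=\;
\sum_{j\neq m} e^{\bs h_a(j)-\bs h_a(m)} \sum_{z\in\ms V^{(p)}_m} \mu^{p,a}_n(z)\, R^{(p)}_n(z,\ms V^{(p)}_j)\;,
\end{equation*}
with $i,j$ ranging over $\mf D^{(p)}_a\setminus\{m\}$. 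Multiplying by $\theta^{(p)}_n$ and letting $n\to\infty$, I combine the boundedness from Lemma \ref{l17}, the first step ($\nu^n_i\to\pi^{(p)}_i$ when $c_i>0$, while the contribution vanishes when $c_i=0$), and the fact that $\nu^n_i$ and $\pi_n(\,\cdot\,)/\pi_n(\ms V^{(p)}_i)$ share the same limit $\pi^{(p)}_i$, to deduce from \eqref{20} and \eqref{o-34} that
\begin{equation*}
\theta^{(p)}_n\sum_{x\in\ms V^{(p)}_i}\mu^{p,a}_n(x)\, R^{(p)}_n(x,\ms V^{(p)}_m)\;\longrightarrow\; c_i\,r^{(p)}(i,m)\;.
\end{equation*}
The balance therefore passes to the limit as $\sum_{i\neq m} c_i\, r^{(p)}(i,m)\, e^{\bs h_a(m)-\bs h_a(i)} = c_m \sum_{j\neq m} r^{(p)}(m,j)\, e^{\bs h_a(j)-\bs h_a(m)}$ for every $m\in\mf D^{(p)}_a$, which is exactly the stationarity equation for the tilted generator $\mf M_{\bs h_a}\bb L^{(p)}_a$, whose rates are $r^{(p)}(i,m)\,e^{\bs h_a(m)-\bs h_a(i)}$.

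Finally, I invoke the first-order characterisation of the optimiser of \eqref{19}: because $\bs h_a$ solves the variational problem defining $I_{\bb L^{(p)}_a}(\omega^{(a)})$, the Euler–Lagrange condition states precisely that the normalised vector $\omega^{(a)}=(\omega_j/\Omega_a)_{j\in\mf D^{(p)}_a}$ is a stationary measure of $\mf M_{\bs h_a}\bb L^{(p)}_a$. Since $\mf D^{(p)}_a$ is a (non-singleton) equivalence class of $\bb X^{(p)}_t$, the reflected generator $\bb L^{(p)}_a$ is irreducible, and tilting does not affect irreducibility; hence its stationary measure is unique and strictly positive, forcing $c=\omega^{(a)}$. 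In particular every $c_j=\omega^{(a)}_j>0$, so the first step applies to all blocks and $\mu^{p,a}_n\to\sum_{j\in\mf D^{(p)}_a}\omega^{(a)}_j\,\pi^{(p)}_j$. As all subsequential limits coincide, the full sequence converges, proving the lemma.
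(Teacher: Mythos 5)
Your second and third steps are sound and essentially reproduce Step 5 of the paper's proof: summing the stationarity identity over a block, passing to the limit with Lemma \ref{l17} (which also disposes of blocks with $c_i=0$), \eqref{20}, \eqref{o-34} and \eqref{o-58}, and then identifying $c=\omega^{(a)}$ because the Euler--Lagrange condition (Lemma \ref{l10}) makes $\omega^{(a)}$ stationary for the tilted rates $r^{(p)}(i,m)\,e^{\bs h_a(m)-\bs h_a(i)}$, whose chain is irreducible on $\mf D^{(p)}_a$. The genuine gap is your first step, and it is not a technical omission: it is the actual content of the lemma. You assert that ``a two-scale argument shows'' that the conditional law $\nu^n_j$ is asymptotically the stationary measure of the within-block dynamics, and that this within-block stationary measure ``agrees to leading order'' with that of the trace process on $\ms V^{(p)}_j$, i.e.\ with $\pi_n$ conditioned to the block. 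Neither claim is proved, and both are exactly where the difficulty sits.

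The reason these claims are delicate is that for $p\ge 2$ the within-block dynamics is asymptotically \emph{reducible}: its limiting rates $\bb R_0$ restricted to $\ms V^{(p)}_j$ split the block into the closed irreducible classes $\ms V^{(1)}_k$. Consequently any soft argument (passing the stationarity equation to the limit, weak compactness) only shows that the limit of $\nu^n_j$ is \emph{some} convex combination of the measures $\pi^{(1)}_k$; it cannot identify the weights, which are determined by the dynamics at every intermediate scale $\theta^{(1)}_n,\dots,\theta^{(p-1)}_n$. (For $p=1$ the limiting block chain is irreducible and your argument does close; the problem starts at $p=2$.) This is precisely why the paper's proof runs a level-by-level iteration (its Steps 1--4), testing stationarity against functions constant on level-$r$ blocks and using \eqref{16b}, \eqref{31}, Corollary \ref{l19} and Lemma \ref{l21} to climb from level $1$ up to level $p$. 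Your proposed shortcut --- comparing the reflected block dynamics with the trace on the block, whose rates differ by $O(1/\theta^{(p)}_n)$ --- would require a stability theorem for stationary measures under additive rate perturbations of size $1/\theta^{(p)}_n$; but the rates being perturbed are themselves of assorted vanishing orders $1/\theta^{(q)}_n$, $q\le p-1$ (some individual rates may even vanish identically), so the perturbations are not relatively small edge by edge, and standard perturbation theory does not apply. Proving that stability is equivalent to the hierarchical analysis itself (a matrix-tree expansion under assumption \eqref{mh}, or the paper's iteration). Note finally that the statement you implicitly rely on --- that the stationary measure of the reflected block dynamics converges to $\pi^{(p)}_j$ --- is Lemma \ref{l23} of the paper, which is proved there by the very argument of Lemma \ref{l12}; so, within this paper's logical structure, your route is circular.
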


\begin{proof}
Since $\ms P(\ms W^{(p)}_a)$ is compact for the weak topology, it is
enough to prove uniqueness of limit points.  consider a subsequence of
$\mu^{p,a}_n$, still denoted by $\mu^{p,a}_n$, which converges to a
limit denoted by $\nu\in \ms P(\ms W^{(p)}_a)$.

\smallskip\noindent {\bf Step 1: $\nu$ on the sets $\ms V^{(1)}_i$.}
The set $\ms W^{(p)}_a$ is the union of sets $\ms V^{(p)}_i$, which in
turn are formed by sets $\ms V^{(1)}_j$. Hence,
$\ms W^{(p)}_a = \cup_{i\in S^a_1} \ms V^{(1)}_i$ for some subset
$\color{blue} S^a_1$ of $S_1$.

Fix $x\in \ms W^{(p)}_a$ and assume that $x\in \ms V^{(1)}_k$ for some
$k\in S^a_1$. Since $\mu^{p,a}_n$ is the stationary state for the
chain induced by the generator
$\mf M_{H_a}\, \mf R_{\ms W^{(p)}_a} \, \mf T_{\ms V^{(p)}}\,\ms L_n$,
\begin{equation*}
\sum_{y\in \ms W^{(p)}_a} \mu^{p,a}_n(y)\, R^{(p)}_n(y,x) \, e^{H_a(x)-H_a(y)} \;=\;
\sum_{y\in \ms W^{(p)}_a} \mu^{p,a}_n(x)\, R^{(p)}_n(x,y) \, e^{H_a(y)-H_a(x)} \;.
\end{equation*}
Since $\mu^{p,a}_n \to \nu$,
taking $n\to\infty$ in the previous formula, by Lemma \ref{l18},
\begin{equation*}
\sum_{y\in \ms W^{(p)}_a} \nu (y)\, \bb R_0 (y,x) \, e^{H_a(x)-H_a(y)} \;=\;
\sum_{y\in \ms W^{(p)}_a} \nu (x)\, \bb R_0 (x,y) \, e^{H_a(y)-H_a(x)} \;.
\end{equation*}
By definition of the sets $\ms V^{(1)}_i$, $\bb R_0(z,w)=0$ if
$z\in \ms V^{(1)}_i$, $w\in \ms V^{(1)}_{i'}$ and $i\neq i'$. Hence,
as $x\in \ms V^{(1)}_{k}$ and $H_a$ is contant on each set
$\ms V^{(p)}_{j}$, the previous identity becomes
\begin{equation*}
\sum_{y\in \ms V^{(1)}_{k}} \nu (y)\, \bb R_0 (y,x)  \;=\;
\sum_{y\in \ms V^{(1)}_{k}} \nu(x)\, \bb R_0 (x,y) \;.
\end{equation*}
Therefore, the measure $\nu$ restricted to $\ms V^{(1)}_{k}$ is a
stationary measure for the Markov chain $\bb X_t$ restricted to
$\ms V^{(1)}_{k}$. Since this process is irreducible on
$\ms V^{(1)}_{k}$, by the definition of $\pi^{(1)}_k$ given right
after \eqref{o-52},
$\nu (\,\cdot\,) \,=\, \nu (\ms V^{(1)}_{k}) \, \pi^{(1)}_k
(\,\cdot\,)$. Hence, $\nu$ is a convex combination of the stationary
states $\pi^{(1)}_\ell$:
\begin{equation}
\label{23}
\nu \;=\; \sum_{k\in S^a_1} \vartheta_1 (k)\, \pi^{(1)}_k 
\end{equation}
for some probability measure $\vartheta_1$ on $S^a_1$.

\smallskip\noindent {\bf Step 2: An equation for $\vartheta_1$.}  Fix
$1\le r\le p$, and let
$\color{blue} S^a_r := \{ i\in S_r : \ms V^{(r)}_i \subset \ms
W^{(p)}_a\}$ so that
$\ms W^{(p)}_a = \cup_{i\in S^a_r} \ms V^{(r)}_i$.  Fix
$\bs g\colon S^a_r \to \bb R$ and let
$G\colon \ms W^{(p)}_a \to \bb R$ be given by
$G = \sum_{j\in S^a_r} \bs g(j)\, \chi_{\ms V^{(r)}_j}$. As
$\mu^{p,a}_n$ is a stationary state,
\begin{equation*}
\sum_{x\in\ms W^{(p)}_a} \mu^{p,a}_n(x)
\sum_{y\in \ms W^{(p)}_a} R^{(p)}_n(x,y) \, e^{H_a(y) - H_a(x)}\,
\big[ \, G(y)\,-\, G(x)\,\big]\;=\; 0\;.
\end{equation*}
Since $H_a$ is constant on the sets $\ms V^{(p)}_\ell$, it is also
constant on the sets $\ms V^{(r)}_j$, which are subsets of the former
sets.  By definition of $G$ and $H_a$, this identity can be written as
\begin{equation}
\label{17b}
\sum_{j\in S^a_r}\sum_{k\in S^a_r\setminus\{j\}}
e^{\bs f_a(k) - \bs f_a(j)}\,
\big[ \, \bs g(k) \,-\, \bs g (j) \,\big]
\sum_{x\in\ms V^{(r)}_j} \mu^{p,a}_n(x) \, R^{(p)}_n(x,\ms V^{(r)}_k)
\;=\; 0\;.
\end{equation}
Here $\bs f_a \colon S^a_r \to \bb R$ is the function defined by
$\bs f_a(j) = \bs h_a(m)$ for all $j\in S^{a,m}_r$, where
$\color{blue} S^{a,m}_r := \{ i\in S_r : \ms V^{(r)}_i \subset \ms
V^{(p)}_m\}$, $m\in S^a_p$.

If $p=1$, jump to Step 5.  Assume below that $p>1$ and set $r=1$. We
claim that for each $j\neq k \in S^a_1$,
\begin{equation}
\label{16b}
\lim_{n\to\infty} \theta^{(1)}_n\,
\sum_{x\in\ms V^{(1)}_j} \mu^{p,a}_n(x) \, R^{(p)}_n(x,\ms V^{(1)}_k) \;=\;
\vartheta_1 (j)\, r^{(1)}(j,k)\;,
\end{equation}
where $\vartheta_1 \in \ms P(S^a_1)$ is the probability measure obtained in the
first step. 

To prove \eqref{16b}, rewrite the expression on the left-hand side as
\begin{equation*}
\theta^{(1)}_n\,
\sum_{x\in\ms V^{(1)}_j} \frac{\pi_n(\ms V^{(1)}_j)} {\pi_n(x)} \,
\mu^{p,a}_n(x) \, \frac{\pi_n(x)}{\pi_n(\ms V^{(1)}_j)}
\, R^{(p)}_n(x,\ms V^{(1)}_k) \;.
\end{equation*}
By the first part of the proof,
$\mu^{p,a}_n(x) \to \vartheta_1 (j) \, \pi^{(1)}_j(x)$. By
\eqref{o-58}, $\pi_n(\ms V^{(1)}_j)/\pi_n(x) \to 1/
\pi^{(1)}_j(x)$. By Lemma \ref{l34}, the sequence
$\theta^{(1)}_n\, R^{(p)}_n(x,\ms V^{(1)}_k)$ is bounded. Therefore,
by Corollary \ref{l19}, \eqref{16b} holds.

By \eqref{17b} and \eqref{16b},
\begin{equation*}
\sum_{j\in S^a_1}\sum_{k\in S^a_1\setminus\{j\}}
\vartheta_1 (j) \, r^{(1)}(j,k) \,
e^{\bs f_a(k) - \bs f_a(j)}\,
\big[ \, \bs g(k) \,-\, \bs g (j) \,\big]
\;=\; 0
\end{equation*}
for all functions $\bs g\colon S^a_1 \to \bb R$.

By Lemma \ref{l21}, $r^{(1)}(j,k)=0$ for all $j\in S^{a,m}_1$,
$k\in S^{a,\ell}_1$ and $m\not = \ell$. We may therefore rewrite the
previous identity as
\begin{equation*}
\sum_{m\in S^a_p} \sum_{j\in S^{a,m}_1}\sum_{k\in S^{a,m}_1\setminus\{j\}}
\vartheta_1 (j) \, r^{(1)}(j,k) \,
e^{\bs f_a(k) - \bs f_a(j)}\,
\big[ \, \bs g(k) \,-\, \bs g (j) \,\big]
\;=\; 0\;.
\end{equation*}
As the function $\bs f_a$ is constant on each $S^{a,m}_1$, we may
remove the exponential from the previous equation and rewrite the
identity as
\begin{equation*}
\sum_{j\in S^a_1}\sum_{k\in S^a_1\setminus\{j\}}
\vartheta_1 (j) \, r^{(1)}(j,k) \,
\big[ \, \bs g(k) \,-\, \bs g (j) \,\big]
\;=\; 0
\end{equation*}
for all functions $\bs g\colon S^a_1 \to \bb R$.  Hence,
$\vartheta_1 (\cdot)$ is a stationary state for the chain
$\bb X^{(1)}_t$ reflected at $S^a_{1}$.

\smallskip\noindent{\bf Step 3: from $\vartheta_1$ to $\vartheta_2$}
Recall that if $p=1$, the proof continues at Step 5. By its
definition, given a few lines above equation \eqref{o-80},
$\color{blue} M^{(1)}_m$, $m\in S_2$, represents the stationary state
of the chain $\bb X^{(1)}_t$ reflected at
$S^{2,m}_1 = \{j\in S_1: \ms V^{(1)}_j \subset \ms V^{(2)}_m\}$.
Since $\vartheta_1$ is a stationary state of the chain $\bb X^{(1)}_t$
whose support is contained in $S^a_1$, $\vartheta_1$ is a convex
combination of the measures $M^{(1)}_m$, $m \in S^a_2$:
\begin{equation*}
\vartheta_1 (\,\cdot\,) \;=\;  \sum_{m \in S^a_2} \vartheta_2(m)\,
M^{(1)}_m(\,\cdot\,)
\end{equation*}
for a probability measure $\vartheta_2 \in \ms P(S^a_2)$. Thus,
\begin{equation*}
\nu(\,\cdot\,) \;=\; \sum_{k\in S^a_1} \vartheta_1 (k)\, \pi^{(1)}_k (\,\cdot\,)
\;=\; \sum_{k\in S^a_1}
\sum_{m \in S^a_2} \vartheta_2(m)\, M^{(1)}_m(k) \, \pi^{(1)}_k
(\,\cdot\,)\;. 
\end{equation*}
Changing the order of summation, and recalling the definition of the
measures $\pi^{(2)}_m$ yields that
\begin{equation}
\label{24}
\nu(\,\cdot\,) \;=\; \sum_{m \in S^a_2}
\vartheta_2(m)\,  \sum_{k\in S^{2,m}_1}
M^{(1)}_m(k) \, \pi^{(1)}_k (\,\cdot\,)
\;=\; \sum_{m \in S^a_2}
\vartheta_2(m)\,   \pi^{(2)}_m (\,\cdot\,) \;. 
\end{equation}

Step 2 and 3 permitted to pass from \eqref{23} to \eqref{24}. That is,
at the end of Step 1 we obtained that $\nu$ is a convex combination of
the measures $\pi^{(1)}_j$. We now have shown that it is, actually, a
convex combination of the measures $\pi^{(2)}_j$. Next step consist in
iterating this argument to obtain that $\nu$ is, actually, a convex
combination of the measures $\pi^{(p)}_j$.

\smallskip\noindent{\bf Step 4: An iteration.}  If $p=2$ the proof
continues at Step 5.  Assume here that $p>2$, and suppose that we
proved that
\begin{equation*}
\nu(\,\cdot\,) \;=\; \sum_{j \in S^a_s}
\vartheta_s(j)\,   \pi^{(s)}_j (\,\cdot\,)  
\end{equation*}
for some $2\le s<p$ and some probability measure
$\vartheta_s \in \ms P(S^a_s)$.

Resume the proof of Step 2 at equation \eqref{17b} with $r=s$.  Recall
the argument presented to derive \eqref{16b}.  Fix
$j\not = k\in S^a_s$. Inserting $\pi_n(\ms V^{(s)}_j)/\pi_n(x)$
instead of $\pi_n(\ms V^{(1)}_j)/\pi_n(x)$ yields that
\begin{equation}
\label{31}
\lim_{n\to\infty} \theta^{(s)}_n\,
\sum_{x\in\ms V^{(s)}_j} \mu^{p,a}_n(x) \, R^{(p)}_n(x,\ms V^{(s)}_k) \;=\;
\vartheta_s (j)\, r^{(s)}(j,k)\;.
\end{equation}
By Lemma \ref{l21}, $r^{(s)}(j,k) = 0$ if $j\in S^{a,\ell}_s$,
$k\in S^{a,m}_s$ and $\ell \neq m$. Hence, applying the arguments
presented in Step 3, one gets that
\begin{equation*}
\nu(\,\cdot\,) \;=\; \sum_{j \in S^a_{s+1}}
\vartheta_{s+1}(j)\,   \pi^{(s+1)}_j (\,\cdot\,) 
\end{equation*}
for some some probability measure $\vartheta_{s+1} \in \ms P(S^a_{s+1})$.

Iterating this procedure yields that
\begin{equation}
\label{25}
\nu(\,\cdot\,) \;=\; \sum_{j \in S^a_{p}}
\vartheta_{p}(j)\,   \pi^{(p)}_j (\,\cdot\,) 
\end{equation}
for some some probability measure $\vartheta_{p} \in \ms P(S^a_{p})$.

\smallskip\noindent{\bf Step 5: Conclusion.}
Applying again the arguments presented in Step 2 yields that
\begin{equation*}
\sum_{j\in S^a_p}\sum_{k\in S^a_p\setminus\{j\}}
\vartheta_{p} (j) \, r^{(p)}(j,k) \,
e^{\bs h_a(k) - \bs h_a(j)}\,
\big[ \, \bs g(k) \,-\, \bs g (j) \,\big]
\;=\; 0
\end{equation*}
for all function $\bs g\colon S^a_p \to \bb R$. In particular,
$\vartheta_{p}$ is a stationary state for the $S^a_p$-valued Markov chain
which jumps from $j$ to $k$ at rate
$r^{(p)}_{\bs h_a} (j,k) := r^{(p)}(j,k) \, \exp\{ \bs h_a(k) - \bs h_a(j)
\}$.
Since the chain $\bb X^{(p)}_t$ is irreducible on $S^a_p$, so is the
one with jump rates $r^{(p)}_{\bs h_a}(j,k)$. Hence, $\vartheta_{p}$ is the
unique stationary state. Since $\omega$ is also stationary,
$\vartheta_{p} (\,\cdot\,) = \Omega^{-1}_a \omega (\,\cdot\,)$.

This proves the uniqueness of limit points of the sequence
$\mu^{p,a}_n$ and completes the proof of the lemma.
\end{proof}

Fix $j\in S^{\rm sgl}_p$. By Lemma \ref{l35}, the Markov chain induced
by the generator $\mf R_{\ms V^{(p)}_j} \mf T_{\ms V^{(p)}} \ms L_n$
is irreducible. Denote by $\nu^{p,j}_n \in \ms P(\ms V^{(p)}_j)$ its
stationary state.

\begin{lemma}
\label{l23}
For each $j\in S^{\rm sgl}_p$, the sequence of probability measures
$\nu^{p,j}_n$ converges to $\pi^{(p)}_j$.
\end{lemma}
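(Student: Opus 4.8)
The plan is to follow the strategy of Lemma \ref{l12} almost verbatim, exploiting the fact that the present generator $\mf R_{\ms V^{(p)}_j} \mf T_{\ms V^{(p)}} \ms L_n$ is exactly the reflected trace generator appearing there, only \emph{without} the tilt: since $\{j\}$ is a singleton equivalence class, the function $H_a$ of \eqref{33} reduces to a constant on $\ms V^{(p)}_j$, so the tilting operator $\mf M_{H_a}$ acts trivially. By Lemma \ref{l35} the reflected process is irreducible, so $\nu^{p,j}_n$ is well defined, and since $\ms P(\ms V^{(p)}_j)$ is compact it suffices to prove that every limit point $\nu$ of the sequence equals $\pi^{(p)}_j$.

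First I would reproduce Step 1 of the proof of Lemma \ref{l12}. Writing $\ms V^{(p)}_j = \cup_{k\in S^j_1}\ms V^{(1)}_k$ for a suitable $S^j_1\subset S_1$ and using the stationarity of $\nu^{p,j}_n$ together with the convergence $R^{(p)}_n(x,y)\to\bb R_0(x,y)$ of Lemma \ref{l18}, and the fact that $\bb R_0$ carries no mass between distinct level-one classes, one deduces that the restriction of $\nu$ to each $\ms V^{(1)}_k$ is a stationary measure for $\bb X_t$ on that irreducible class. By irreducibility this forces $\nu = \sum_{k\in S^j_1}\vartheta_1(k)\,\pi^{(1)}_k$ for some $\vartheta_1\in\ms P(S^j_1)$.

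Next I would run the iteration of Steps 2--4 with $S^a_r$ replaced throughout by $S^j_r := \{\, i\in S_r : \ms V^{(r)}_i \subset \ms V^{(p)}_j\,\}$. The limit identity \eqref{16b}, and its level-$r$ analogue \eqref{31}, hold by Lemma \ref{l34}, Corollary \ref{l19} and the first-step decomposition, while Lemma \ref{l21} guarantees that $r^{(r)}(j,k)$ vanishes across distinct level-$(r+1)$ blocks. At each stage this shows that the weight $\vartheta_r$ is a stationary state of the chain $\bb X^{(r)}_t$ reflected at $S^j_r$, hence decomposes along the measures $M^{(r)}_m$, upgrading $\nu$ into a convex combination of the level-$(r+1)$ measures $\pi^{(r+1)}_m$. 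Iterating up to level $p$ gives $\nu = \sum_{m\in S^j_p}\vartheta_p(m)\,\pi^{(p)}_m$, the analogue of \eqref{25}. A minor but necessary check is that the absence of tilting does not disturb Steps 2--4; it does not, because the exponential factors in \eqref{17b} are constant on each block and cancel exactly as in the tilted case.

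The decisive simplification, relative to Lemma \ref{l12}, is that $\ms V^{(p)}_j$ is a \emph{single} level-$p$ block, so $S^j_p = \{\, m\in S_p : \ms V^{(p)}_m \subset \ms V^{(p)}_j\,\} = \{j\}$. Consequently the convex combination collapses to a single term and $\nu = \pi^{(p)}_j$; in particular no analogue of Step 5 (the tilted reflected chain and its uniqueness) is required. Having identified the unique limit point, the convergence $\nu^{p,j}_n\to\pi^{(p)}_j$ follows. I expect the only real obstacle to be purely expository: the technical heart, namely the tree iteration driven by the jump-rate estimates of Section \ref{sec2}, is already carried out in Lemma \ref{l12}, and the work here is to confirm that dropping the tilt and using the singleton structure forces the limit to be exactly $\pi^{(p)}_j$.
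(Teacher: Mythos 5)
Your proposal is correct and is exactly the paper's argument: the paper's proof of Lemma \ref{l23} simply states that one repeats the proof of Lemma \ref{l12} with $H=0$ and $\ms W^{(p)}_a = \ms V^{(p)}_j$, which is precisely your reduction. Your additional observation that the singleton structure $S^j_p=\{j\}$ collapses the final convex combination and makes Step 5 unnecessary is the content of the paper's parenthetical ``(and simpler)''.
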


\begin{proof}
The proof is similar (and simpler) than the one of Lemma \ref{l12}. It
amounts to take $H=0$ and $\ms W^{(p)}_a = \ms V^{(p)}_j$.
\end{proof}

Consider the measures $\mu^{p,a}_n$, $\nu^{p,j}_n$, $1\le a\le m_p$, 
$j \in S^{\rm sgl}_p$, as probability measures on $\ms V^{(p)}$, and
let $\mu_n \in \ms P(\ms V^{(p)})$ be the measure given by
\begin{equation}
\label{26}
\mu_n(\,\cdot\,) \;=\; \sum_{a=1}^{m_p} \Omega_a \,
\mu^{p,a}_n (\,\cdot\,)\,
\;+\;
\sum_{j \in S^{\rm sgl}_p} \omega_j \, \nu^{p,j}_n (\,\cdot\,)  \;. 
\end{equation}
Next result follows from \eqref{27} and Lemmata \ref{l12} and
\ref{l23}.

\begin{corollary}
\label{l22}
The sequence of probability measures $\mu_n$ converges to $\mu$.
\end{corollary}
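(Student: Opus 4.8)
The plan is to pass to the limit $n\to\infty$ directly in the definition \eqref{26} of $\mu_n$ and to recognise the resulting measure as $\mu$ by means of the decomposition \eqref{27}. Since all the measures in play are supported on the finite set $\ms V^{(p)}$, weak convergence reduces to the convergence of the mass assigned to each individual point, so it suffices to evaluate $\lim_n \mu_n$ summand by summand.

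First I would invoke Lemma \ref{l12} for each of the measures $\mu^{p,a}_n$, $1\le a\le m_p$, and Lemma \ref{l23} for each of the measures $\nu^{p,j}_n$, $j\in S^{\rm sgl}_p$, that appear on the right-hand side of \eqref{26}. Taking the limit term by term gives
\begin{equation*}
\lim_{n\to\infty} \mu_n(\,\cdot\,)
\;=\; \sum_{a=1}^{m_p} \Omega_a \sum_{j\in \mf D^{(p)}_a}
\frac{\omega_j}{\Omega_a}\, \pi^{(p)}_j(\,\cdot\,)
\;+\; \sum_{j\in S^{\rm sgl}_p} \omega_j\, \pi^{(p)}_j(\,\cdot\,)\;.
\end{equation*}
In the first sum the normalising constants $\Omega_a$ cancel against the weights $\omega^{(a)}_j = \omega_j/\Omega_a$ supplied by Lemma \ref{l12}, so that it collapses to $\sum_{a=1}^{m_p}\sum_{j\in \mf D^{(p)}_a} \omega_j\, \pi^{(p)}_j = \sum_{j\in \cup_a \mf D^{(p)}_a} \omega_j\, \pi^{(p)}_j$. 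Since, by \eqref{27}, the set $S_p$ is the disjoint union of $S^{\rm sgl}_p$ and $\cup_{a=1}^{m_p} \mf D^{(p)}_a$, adding the singleton sum yields $\sum_{j\in S_p} \omega_j\, \pi^{(p)}_j = \mu$, which is the claim.

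There is no real obstacle here: the statement is a purely linear consequence of the two preceding lemmata together with the partition identity \eqref{27}, and the only point to keep track of is the bookkeeping of the constants $\Omega_a$, which were built into the definition \eqref{26} precisely so as to cancel the normalisations appearing in the limits of the reflected stationary measures $\mu^{p,a}_n$. Consequently the proof is short and requires no new estimate.
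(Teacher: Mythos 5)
Your proof is correct and is exactly the argument the paper intends: the paper states that the corollary ``follows from \eqref{27} and Lemmata \ref{l12} and \ref{l23}'', and your term-by-term passage to the limit in \eqref{26}, with the cancellation of the constants $\Omega_a$ and the partition \eqref{27} of $S_p$, is precisely that deduction spelled out.
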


Fix $\mu\in \ms P(\ms V^{(p)})$ which can be represented as
$\mu = \sum_{j\in S_p} \omega_j \, \pi^{(p)}_j$ for some
$\omega \in \ms P (S_p)$ such that $\omega_j >0$ for all $j\in S_p$.
Recall the definition of $\Omega_a$, $\omega^{(a)}$ introduced in
Lemma \ref{l12}.  By definition of $\ms I^{(p)}$, Lemma \ref{l04} and
\eqref{05},
\begin{equation}
\label{18}
\ms I^{(p)} (\mu) \, =\, \sum_{a=1}^{m_p}
\Omega_a\, I_{\bb L^{(p)}_a} (\omega^{(a)}) 
\;+\;
\sum_{a=1}^{\ell_p} \sum_{b\neq a}
\sum_{j\in \mf Q^{(p)}_a} \sum_{k\in \mf Q^{(p)}_b} \omega_j \,
r^{(p)}(j,k)  \;.
\end{equation}
Note that we can restrict the first sum in the second term of the
right-hand side to the transient equivalent classes.

\begin{lemma}
\label{l11}
Fix $1\le p\le \mf q$, and a measure $\mu\in \ms P(\ms V^{(p)})$ which
can be represented as $\mu = \sum_{j\in S_p} \omega_j \, \pi^{(p)}_j$
for some $\omega \in \ms P (S_p)$ such that $\omega_j >0$ for all
$j\in S_p$. Let $(\mu_n : n\ge 1)$ be the sequence of probability
measures introduced in \eqref{26}. Then,
\begin{equation*}
\limsup_{n\to\infty} \theta^{(p)}_n\, \ms I^{(p)}_n(\mu_n) \;\le\; \ms
I^{(p)}(\mu)\;.  
\end{equation*}
\end{lemma}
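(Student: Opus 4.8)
The plan is to start from the variational formula \eqref{eq:2} for $\ms I^{(p)}_n$ and split the test-function functional according to whether a jump stays inside one of the sets $\ms V^{(p)}_j$ or crosses between two of them. Recalling that the trace generator has rates $R^{(p)}_n$, I would write, for every $H\colon \ms V^{(p)}\to\bb R$,
\begin{equation*}
-\sum_{x\in\ms V^{(p)}}\mu_n(x)\,e^{-H(x)}\,[(\mf T_{\ms V^{(p)}}\ms L_n)e^H](x)
\;=\; \sum_{x,y\in\ms V^{(p)}}\mu_n(x)\,R^{(p)}_n(x,y)\,\big(1-e^{H(y)-H(x)}\big)\;,
\end{equation*}
and then group the blocks of $\ms V^{(p)}$ into the unions $\ms W^{(p)}_a$, $1\le a\le m_p$, attached to the non-singleton classes $\mf D^{(p)}_a$, together with the singleton sets $\ms V^{(p)}_j$, $j\in S^{\rm sgl}_p$. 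The diagonal (within one block) part of the sum depends on $H$ only through its restriction to that block, while $1-e^{H(y)-H(x)}<1$ bounds the off-diagonal (between two blocks) part by $\sum\mu_n(x)R^{(p)}_n(x,y)$; hence the subadditivity $\sup(f+g)\le\sup f+\sup g$ of the supremum, applied to \eqref{eq:2}, yields the decomposition bound
\begin{equation*}
\ms I^{(p)}_n(\mu_n)\;\le\;
\sum_{a=1}^{m_p}\Omega_a\, I_{\mf R_{\ms W^{(p)}_a}\mf T_{\ms V^{(p)}}\ms L_n}(\mu^{p,a}_n)
\;+\;\sum_{j\in S^{\rm sgl}_p}\omega_j\, I_{\mf R_{\ms V^{(p)}_j}\mf T_{\ms V^{(p)}}\ms L_n}(\nu^{p,j}_n)
\;+\; \mf F_n\;,
\end{equation*}
where $\mf F_n$ is the total flux carried by edges joining two distinct blocks and each reflected generator keeps the internal rates $R^{(p)}_n$ unchanged.

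The heart of the argument is to evaluate the reflected functionals exactly. By construction $\mu^{p,a}_n$ is the stationary state of the tilted reflected generator $\mf M_{H_a}\mf R_{\ms W^{(p)}_a}\mf T_{\ms V^{(p)}}\ms L_n$, so the stationarity identity is precisely the vanishing of the gradient of the concave map $H\mapsto\sum\mu^{p,a}_n(x)R^{(p)}_n(x,y)(1-e^{H(y)-H(x)})$ at $H=H_a$. Consequently the supremum defining $I_{\mf R_{\ms W^{(p)}_a}\mf T_{\ms V^{(p)}}\ms L_n}(\mu^{p,a}_n)$ is attained at $H=H_a$ and equals $\sum_{x,y\in\ms W^{(p)}_a}\mu^{p,a}_n(x)R^{(p)}_n(x,y)(1-e^{H_a(y)-H_a(x)})$. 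By \eqref{33} the function $H_a$ is constant on each $\ms V^{(p)}_j$, so every within-block (fast) term vanishes and only edges between two blocks $\ms V^{(p)}_j$, $\ms V^{(p)}_k$ of $\mf D^{(p)}_a$ survive. Multiplying by $\theta^{(p)}_n$ and using Lemma \ref{l12}, \eqref{o-58}, \eqref{o-34} and Corollary \ref{l19} to pass to the limit of the scaled fluxes, this contribution converges to $\sum_{j\neq k\in\mf D^{(p)}_a}\omega^{(a)}_j\,r^{(p)}(j,k)(1-e^{\bs h_a(k)-\bs h_a(j)})$, which is exactly $I_{\bb L^{(p)}_a}(\omega^{(a)})$ since, by Lemma \ref{l02}, $\bs h_a$ solves that optimal problem. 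For the singleton blocks the tilt is absent, and the same formula with $H=0$ gives $I_{\mf R_{\ms V^{(p)}_j}\mf T_{\ms V^{(p)}}\ms L_n}(\nu^{p,j}_n)=0$ for every $n$ (via Lemma \ref{l23}), so these terms drop out.

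It remains to treat the flux term $\mf F_n$. Each of its edges joins blocks lying in two different equivalence classes, and after scaling by $\theta^{(p)}_n$ the same convergence of fluxes (Lemma \ref{l17}, \eqref{o-58}, \eqref{o-34}) gives $\lim_n\theta^{(p)}_n\mf F_n=\sum_{a\neq b}\sum_{j\in\mf Q^{(p)}_a}\sum_{k\in\mf Q^{(p)}_b}\omega_j\,r^{(p)}(j,k)$, using that $\mu_n$ restricts on each $\ms V^{(p)}_j$ to a measure converging to $\omega_j\pi^{(p)}_j$. Adding the three contributions and comparing with the decomposition \eqref{18} of $\ms I^{(p)}(\mu)$ yields $\limsup_n\theta^{(p)}_n\ms I^{(p)}_n(\mu_n)\le\ms I^{(p)}(\mu)$, as claimed. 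The delicate point throughout is the control of the fast, within-block transitions: they carry $O(1)$ rates and would blow up under the factor $\theta^{(p)}_n$, and the whole scheme works only because the tilted-stationarity of $\mu^{p,a}_n$ lets one realise the supremum at the block-constant function $H_a$, so that these terms cancel identically \emph{before} the limit is taken.
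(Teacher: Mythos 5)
Your proof is correct and follows essentially the same route as the paper's: the paper first splits $\ms I^{(p)}_n(\mu_n)$ by convexity over the measures $\mu^{p,a}_n$, $\nu^{p,j}_n$ and then applies the exact decomposition of Lemma \ref{l04}/\eqref{05}, which yields precisely your reflected-plus-flux bound, after which both arguments coincide: the tilted stationarity of $\mu^{p,a}_n$ (Lemma \ref{l10}) realises the supremum at the block-constant $H_a$, killing the fast within-block terms, the scaled fluxes converge to $\omega_j\, r^{(p)}(j,k)$, and the limit is identified with $\ms I^{(p)}(\mu)$ via \eqref{18}. The only differences are cosmetic (your subadditivity-of-sup step replaces the paper's convexity-plus-Lemma-\ref{l04} step, and the vanishing of the singleton terms follows from stationarity of $\nu^{p,j}_n$ rather than from Lemma \ref{l23}).
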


\begin{proof}
By convexity,
\begin{equation}
\label{32}
\ms I^{(p)}_n(\mu_n) \;\le\;
\sum_{a=1}^{m_p} \Omega_a \,
\ms I^{(p)}_n( \mu^{p,a}_n)  \,+\,
\sum_{j \in S^{\rm sgl}_p} \omega_j \,
\ms I^{(p)}_n(\nu^{p,j}_n) \;. 
\end{equation}
We investigate the asymptotic behavior of each term separately.

Fix $j\in S^{\rm sgl}_p$. By Lemma \ref{l04} and equation
\eqref{05}, since the support of the measure $\nu^{p,j}_n$ is the set
$\ms V^{(p)}_j$, and, by Lemma \ref{l35}, the trace process
$Y^{n,p}_t$ reflected at $\ms V^{(p)}_j$ is irreducible,
\begin{equation*}
\ms I^{(p)}_n  (\nu^{p,j}_n)
\, =\, I_{\mf R_{\ms V^{(p)}_j} \mf T_{\ms V^{(p)}} \ms L_n} (\nu^{p,j}_n) 
\;+\;
\sum_{x\in \ms V^{(p)}_j}
\sum_{y\in \ms V^{(p)} \setminus \ms V^{(p)}_j}
\nu^{p,j}_n(x)\, R^{(p)}_n(x,y)  \;.
\end{equation*}
The first term on the right-hand side vanishes because $\nu^{p,j}_n$
is the stationary state of the process induced by the generator $\mf
R_{\ms V^{(p)}_j} \mf T_{\ms V^{(p)}} \ms L_n$. By the proof of
\eqref{16b}, or \eqref{31}, the
second term multiplied by $\theta^{(p)}_n$ converges to $\sum_{k\in
S_p \setminus \{j\}} r^{(p)}(j,k)$ so that
\begin{equation*}
\lim_{n\to\infty} 
\theta^{(p)}_n\,
\sum_{j \in S^{\rm sgl}_p} \omega_j \, \ms I^{(p)}_n(\nu^{p,j}_n)
\;=\; \sum_{j \in S^{\rm sgl}_p} 
\sum_{k\in S_p \setminus \{j\}} \omega_j\, r^{(p)}(j,k)\;.
\end{equation*}

The analysis of the asymptotic behavior of the first term on the
right-hand side in \eqref{32} is similar.  Fix $1\le a\le m_p$. Since
the support of the measure $\mu^{p,a}_n$ is the set $\ms W^{(p)}_a$,
and, by Lemma \ref{l36}, the trace process $Y^{n,p}_t$ reflected at
$\ms W^{(p)}_a$ is irreducible, by Lemma \eqref{l04} and equation
\eqref{05},
\begin{equation}
\label{34}
\ms I^{(p)}_n  (\mu^{p,a}_n)
\, =\, I_{\mf R_{\ms W^{(p)}_a} \mf T_{\ms V^{(p)}} \ms L_n}
(\mu^{p,a}_n) 
\;+\;
\sum_{x\in \ms W^{(p)}_a}
\sum_{y\in \ms V^{(p)} \setminus \ms W^{(p)}_a}
\mu^{p,a}_n(x)\, R^{(p)}_n(x,y)  \;.
\end{equation}
Since $\mu^{p,a}_n$ is the stationary state of the dynamics induced by
the generator $\mf M_{H_a}$
$\mf R_{\ms W^{(p)}_a} \mf T_{\ms V^{(p)}} \ms L_n$, where $H_a$ is
the function introduced in \eqref{33}, by Lemma \ref{l10},
\begin{equation*}
I_{\mf R_{\ms W^{(p)}_a} \mf T_{\ms V^{(p)}} \ms L_n}
(\mu^{p,a}_n) \;=\; -\,
\sum_{x\in \ms W^{(p)}_a} 
\sum_{y\in \ms W^{(p)}_a \setminus \{x\}}
\mu^{p,a}_n(x) \, R_n(x,y)\, 
\big[\, e^{H_a(y) -H_a(x)} - 1\,\big]\;.
\end{equation*}
As $H_a$ is constant and equal to $\bs h_a(j)$ on each set
$\ms V^{(p)}_j$, $j\in \mf D^{(p)}_a = S^a_p$, the previous expression
is equal to
\begin{equation*}
-\, \sum_{j\in S^a_p}
\sum_{k\in S^a_p \setminus \{j\}}
\big[\, e^{\bs h_a(k) - \bs h_a(j)} - 1\,\big]
\sum_{x\in \ms V^{(p)}_j} 
\sum_{y\in \ms V^{(p)}_k }
\mu^{p,a}_n(x) \, R_n(x,y) \;.
\end{equation*}
Hence, by the proof of \eqref{16b}, or \eqref{31},
\begin{equation*}
\lim_{n\to\infty} \theta^{(p)}_n\,
I_{\mf R_{\ms W^{(p)}_a} \mf T_{\ms V^{(p)}} \ms L_n}
(\mu^{p,a}_n) \;=\; -\,
\sum_{j\in S^a_p}
\sum_{k\in S^a_p \setminus \{j\}}
\big[\, e^{\bs h_a(k) - \bs h_a(j)} - 1\,\big]
\, \omega^{(a)}_j \, r^{(p)}(j,k) \;.
\end{equation*}
As $\bs h_a \colon S^a_p \to \bb R$ is the function which
solves the optimal problem \eqref{19} for
$I_{\bb L^{(p)}_a} (\omega^{(a)})$, the right-hand side is equal to 
$I_{\bb L^{(p)}_a} (\omega^{(a)})$ so that
\begin{equation*}
\lim_{n\to\infty} \theta^{(p)}_n\,
I_{\mf R_{\ms W^{(p)}_a} \mf T_{\ms V^{(p)}} \ms L_n}
(\mu^{p,a}_n) \;=\; I_{\bb L^{(p)}_a} (\omega^{(a)}) \;.
\end{equation*}

By similar reasons, the second term on the right-hand in \eqref{34}
multiplied by $\theta^{(p)}_n$ converges to
$\sum_{j\in S^a_p} \sum_{k\in S_p \setminus S^a_p}
\omega^{(a)}_j \, r^{(p)}(j,k)$ so that
\begin{equation*}
\lim_{n\to\infty} 
\theta^{(p)}_n\,
\sum_{a=1}^{m_p} \Omega_a \,
\ms I^{(p)}_n( \mu^{p,a}_n) 
\;=\; \sum_{a=1}^{m_p} \Omega_a
I_{\bb L^{(p)}_a} (\omega^{(a)}) \;+\;
\sum_{a=1}^{m_p} 
\sum_{j\in S^a_p} \sum_{k\in S_p \setminus S^a_p}
\omega_j \, r^{(p)}(j,k) \;.
\end{equation*}
By \eqref{18}, the right-hand side is equal to $\ms I^{(p)}(\mu)$,
completing the proof of the lemma.
\end{proof}

\begin{proof}[Proof of Proposition \ref{l16}]
Fix $1\le p\le \mf q$ and $\mu\in \ms P(\ms V^{(p)})$.  By Lemmata
\ref{l24} and \ref{l15}, we may assume that $\mu(x)>0$ for all
$x\in \ms V^{(p)}$. If $\ms I^{(p)}(\mu) = \infty$, there is nothing
to prove. Assume, therefore, that
$\mu = \sum_{j\in S_p} \omega_j \, \pi^{(p)}_j$ for some
$\omega \in \ms P(S_p)$. Since $\mu(x)>0$ for all $x\in \ms V^{(p)}$,
$\omega_j>0$ for all $j\in S_p$.  To complete the proof it remains to
recall the assertions of Corollary \ref{l22} and Lemma \ref{l11}.
\end{proof}

\section{Proofs of Proposition \ref{mt2} and Theorem \ref{mt1}}
\label{sec4}

We first present some properties of the functionals $\ms I^{(p)}$.

\begin{lemma}
\label{l39}
Fix $1\le p < \mf q$. Then,
\begin{equation*}
\ms I^{(p+1)} (\mu) \;<\; \infty \quad \text{if and only if}\quad
\ms I^{(p)} (\mu) \;=\; 0\;.
\end{equation*}
\end{lemma}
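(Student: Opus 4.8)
The plan is to convert both conditions into statements about the weight vector obtained by expanding $\mu$ in the measures $\pi^{(p)}_j$, and then to invoke the characterization of the zero set of the level-two rate functional of the (generally reducible) chain $\bb X^{(p)}_t$. The key preliminary remark is that by \eqref{o-52} the measures $\pi^{(p)}_j$, $j\in S_p$, have pairwise disjoint supports $\ms V^{(p)}_j$; hence whenever $\mu=\sum_{j\in S_p}\omega_j\,\pi^{(p)}_j$ the weights are forced to be $\omega_j=\mu(\ms V^{(p)}_j)$, so the representation is unique and $\ms I^{(p)}(\mu)=\bb I^{(p)}(\omega)$ is well defined. The algebraic engine of the argument is identity \eqref{o-80}: since the stationary state $M^{(p)}_m$ of $\bb X^{(p)}_t$ vanishes off the recurrent class $\mf R^{(p)}_m$, one has $\pi^{(p+1)}_m=\sum_{j\in S_p}M^{(p)}_m(j)\,\pi^{(p)}_j$.

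First I would prove that $\ms I^{(p)}(\mu)=0$ implies $\ms I^{(p+1)}(\mu)<\infty$. If $\ms I^{(p)}(\mu)=0$, then by \eqref{o-83b} the value is not $+\infty$, so $\mu=\sum_{j}\omega_j\,\pi^{(p)}_j$ for some $\omega\in\ms P(S_p)$ with $\bb I^{(p)}(\omega)=0$. Applying the zero-set characterization (Lemma \ref{l32}) to $\bb X^{(p)}_t$, the measure $\omega$ is invariant for $\bb X^{(p)}_t$; since the recurrent classes of that chain are $\mf R^{(p)}_1,\dots,\mf R^{(p)}_{\mf n_{p+1}}$ with stationary states $M^{(p)}_m$, every invariant measure is a convex combination $\omega=\sum_{m\in S_{p+1}}c_m\,M^{(p)}_m$, $c\in\ms P(S_{p+1})$. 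Substituting this into $\mu=\sum_j\omega_j\,\pi^{(p)}_j$ and using the identity above gives $\mu=\sum_{m\in S_{p+1}}c_m\,\pi^{(p+1)}_m$, whence $\ms I^{(p+1)}(\mu)=\bb I^{(p+1)}(c)<\infty$.

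For the converse, suppose $\ms I^{(p+1)}(\mu)<\infty$. By \eqref{o-83b} applied at level $p+1$, $\mu=\sum_{m\in S_{p+1}}c_m\,\pi^{(p+1)}_m$ for some $c\in\ms P(S_{p+1})$. Expanding each $\pi^{(p+1)}_m$ by \eqref{o-80} yields $\mu=\sum_{j\in S_p}\omega_j\,\pi^{(p)}_j$ with $\omega:=\sum_{m}c_m\,M^{(p)}_m\in\ms P(S_p)$. As $\omega$ is a convex combination of the invariant measures $M^{(p)}_m$, it is itself invariant for $\bb X^{(p)}_t$, so Lemma \ref{l32} gives $\bb I^{(p)}(\omega)=0$; by the uniqueness of the expansion noted above, $\ms I^{(p)}(\mu)=\bb I^{(p)}(\omega)=0$. (The hypothesis $p<\mf q$ is used only to guarantee that $\ms I^{(p+1)}$ is among the functionals constructed, so that $\pi^{(p+1)}_m$ and $\bb I^{(p+1)}$ are available.)

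The manipulations with \eqref{o-80} and the disjoint supports are routine bookkeeping; the one substantive ingredient is the description of the zero set of $\bb I^{(p)}$ for the reducible chain $\bb X^{(p)}_t$, namely that it coincides with the invariant simplex spanned by the $M^{(p)}_m$. This is the exact analog, at level $p$, of the fact recorded after \eqref{49} that $\ms I^{(0)}$ vanishes precisely on convex combinations of the $\pi^\sharp_j$, and it is the point where I would be most careful: one must confirm that Lemma \ref{l32} indeed yields the full invariant simplex for a possibly reducible finite-state chain, rather than a single stationary measure as in the irreducible case.
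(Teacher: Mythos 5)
Your proposal is correct and follows essentially the same route as the paper: both directions rest on the identity \eqref{o-80} together with Lemma \ref{l32} applied to the (possibly reducible) chain $\bb X^{(p)}_t$, combined with the standard fact that its invariant measures form the simplex spanned by the $M^{(p)}_m$. Your two added remarks --- the uniqueness of the weights $\omega_j=\mu(\ms V^{(p)}_j)$ forced by the disjoint supports in \eqref{o-52}, and the caution that Lemma \ref{l32} must be read as characterizing the full invariant simplex of a reducible chain --- are points the paper uses implicitly, so they sharpen rather than alter the argument.
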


\begin{proof}
Suppose that $\ms I^{(p)} (\mu) = 0$. Then, by \eqref{o-83b}, $\mu =
\sum_{j\in S_p} \omega_j\, \pi^{(p)}_j$ for some $\omega\in \ms
P(S_p)$ and $\bb I^{(p)}(\omega)=0$. By the definition \eqref{40} of
$\bb I^{(p)}$ and Lemma \ref{l32}, $\omega$ is a stationary state of
the Markov chain $\bb X^{(p)}_t$, that is, $\omega$ is a convex
combination of the measures $M^{(p)}_m$, $m\in S_{p+1}$:
\begin{equation*}
\omega(j) \;=\; \sum_{m\in S_{p+1}} \vartheta (m)\, M^{(p)}_m(j)\;,
\quad j\,\in\,S_p\;,
\end{equation*}
for some $\vartheta \in \ms P(S_{p+1})$.  Inserting this expression in
the formula for $\mu$ and changing the order of summation yields that
\begin{equation*}
\mu \;=\; \sum_{m\in S_{p+1}} \vartheta (m) \, \sum_{j\in S_p} M^{(p)}_m(j)
\, \pi^{(p)}_j \;=\;
\sum_{m\in S_{p+1}} \vartheta (m) \, \pi^{(p+1)}_m\;,
\end{equation*}
where we used identity \eqref{o-80} in the last step. This proves the
first assertion of the lemma because
$\bb I^{(p+1)} (\vartheta) <\infty$ for all
$\vartheta\in \ms P(S_{p+1})$. We turn to the converse.

Suppose that $\ms I^{(p+1)} (\mu) < \infty$. In this case, by
\eqref{o-83b},
$\mu = \sum_{m\in S_{p+1}} \vartheta (m) \, \pi^{(p+1)}_m$ for some
$\vartheta \in \ms P(S_{p+1})$. By \eqref{o-80}, this identity can be
rewritten as
\begin{equation*}
\mu(\,\cdot\,) \;=\; \sum_{j\in S_p} \Big( \, \sum_{m\in S_{p+1}}
\vartheta (m) \, M^{(p)}_m(j)\,\Big)
\, \pi^{(p)}_j (\,\cdot\,) \;.
\end{equation*}
Therefore, by definition of $\ms I^{(p)}$,
$\ms I^{(p)}(\mu) = \bb I^{(p)}(\omega)$, where
$\omega(j) = \sum_{m\in S_{p+1}} \vartheta (m) \, M^{(p)}_m(j)$.
As the measures $M^{(p)}_m$ are stationary for the chain $\bb
X^{(p)}_t$, so is $\omega$. Thus, by Lemma \ref{l32}, $\bb
I^{(p)}(\omega)=0$, as claimed.
\end{proof}

By Lemma \ref{l32}, $\ms I^{(0)} (\mu) \,=\, 0$ if and only if there
exists a probability measure $\omega$ on $S_1$ such that
\begin{equation*}
\mu \;=\; \sum_{j\in S_1} \omega_j\, \pi^{(1)}_j\;.
\end{equation*}
By \eqref{o-83b} and since $\bb I^{(1)}(\omega) <\infty$ for all
$\omega \in \ms P(S_1)$, $\mu$ has this form if and only if
$\ms I^{(1)}(\mu)<\infty$. Hence, the previous lemma holds for $p=0$
as well:
\begin{equation}
\label{o-79}
\ms I^{(1)} (\mu) \;<\; \infty \quad \text{if and only if}\quad
\ms I^{(0)} (\mu) \;=\; 0\;.
\end{equation}

We turn to the proof of Proposition \ref{mt2}. In sake of
completeness, we reproduce the proof that $\ms I^{(0)}$ is a
$\Gamma-\liminf$ of the sequence $\ms I_n$ presented in \cite{bgl2}
and which applies to non-reversible dynamics. Next result is the
second half of \cite[Proposition 8.3]{bgl2}.

\begin{lemma}
\label{l37}
The functional $\ms I^{(0)} \colon \ms P(V) \to \bb R_+$ is a
$\Gamma-\liminf$ of the sequence $\ms I_n$.
\end{lemma}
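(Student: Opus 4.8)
The plan is to exploit the variational formula \eqref{35} directly: rather than optimising over $H$ along the sequence, I would bound $\ms I_n(\mu_n)$ from below by testing against a single function $H\colon V \to \bb R$ held fixed, which produces a quantity that passes easily to the limit. Concretely, fix $\mu \in \ms P(V)$ and an arbitrary sequence $\mu_n \to \mu$ in $\ms P(V)$; since $V$ is finite, weak convergence is the same as pointwise convergence $\mu_n(x) \to \mu(x)$ for every $x \in V$. For each fixed $H$, by \eqref{35},
\begin{equation*}
\ms I_n(\mu_n) \;\ge\; J^{(n)}_H(\mu_n) \;=\; -\, \sum_{x\in V} e^{-H(x)}\, (\ms L_n e^H)(x)\, \mu_n(x)\;.
\end{equation*}

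The heart of the argument is to let $n \to \infty$ in this fixed-$H$ expression. Because $V$ is finite, $(\ms L_n e^H)(x) = \sum_{y\in V} R_n(x,y)\,\{e^{H(y)} - e^{H(x)}\}$ is a finite sum, and the hypothesis \eqref{o-01} that $R_n(x,y) \to \bb R_0(x,y)$ gives $(\ms L_n e^H)(x) \to (\bb L^{(0)} e^H)(x)$ for every $x \in V$. Combined with $\mu_n(x) \to \mu(x)$, each summand converges, so
\begin{equation*}
\lim_{n\to\infty} J^{(n)}_H(\mu_n) \;=\; -\, \sum_{x\in V} e^{-H(x)}\, (\bb L^{(0)} e^H)(x)\, \mu(x) \;=:\; J^{(0)}_H(\mu)\;.
\end{equation*}
Taking the $\liminf$ in the displayed inequality then yields $\liminf_{n} \ms I_n(\mu_n) \ge J^{(0)}_H(\mu)$.

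Since $H$ was arbitrary, I would finish by taking the supremum over all $H\colon V \to \bb R$ on the right-hand side, which by \eqref{o-f11} is precisely $\ms I^{(0)}(\mu)$; hence $\liminf_{n} \ms I_n(\mu_n) \ge \ms I^{(0)}(\mu)$, establishing the $\Gamma$-liminf inequality. I do not expect any genuine obstacle here: the finiteness of $V$ makes every sum finite and turns weak convergence into pointwise convergence, so the only subtlety is the order of operations — one must fix $H$, pass to the limit in $n$, and only afterwards optimise over $H$, exactly as in the standard argument that a pointwise supremum of jointly continuous functionals is a $\Gamma$-liminf. This is also why the conclusion requires no structural assumption on the limiting rates $\bb R_0$ beyond their mere existence in \eqref{o-01}.
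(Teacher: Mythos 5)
Your proposal is correct and coincides with the paper's own argument: the paper also fixes a single test function (written as $u = e^{H} > 0$ rather than $H$), passes to the limit in $n$ using $R_n \to \bb R_0$ and $\mu_n \to \mu$ over the finite set $V$, and only then optimises, yielding $\liminf_n \ms I_n(\mu_n) \ge \sup_{u>0} J^{(0)}(\mu) = \ms I^{(0)}(\mu)$. The parametrisation by $H$ versus by $u=e^H$ is immaterial, so the two proofs are essentially identical.
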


\begin{proof}
Fix  $\mu\in \ms P(V)$ and a sequence
of probability measures $\mu_n$ in $\ms P(\ms V)$ converging to $\mu$.
By definition of $\ms I_n$,
\begin{equation*}
\ms I_n(\mu_n) \; \ge \; -\, \int_V \frac{\ms L_n u}{u}\, d\mu_n
\;=\; -\, \sum_{x\in V} \frac{\mu_n(x)}{u(x)}\,
\sum_{y\in V} R_n(x,y) \, [\, u(y) - u(x)\,]
\end{equation*}
for all $u: V \to (0,\infty)$. As $\mu_n\to \mu$ and $R_n \to \bb
R_0$, this expression converges to
\begin{equation*}
-\, \sum_{y\neq x\in V} \frac{\mu(x)}{u(x)}\,
\bb R_0(x,y) \, [\, u(y) - u(x)\,] \;.
\end{equation*}
Therefore,
\begin{equation*}
\liminf_{n\to\infty} \ms I_n(\mu_n) \; \ge \;
\sup_{u>0}  \, -\, \sum_{y\neq x\in V} \frac{\mu(x)}{u(x)}\,
\bb R_0(x,y) \, [\, u(y) - u(x)\,] \;=\; \ms I^{(0)}(\mu) \;,
\end{equation*}
which completes the proof of the lemma.
\end{proof}

We turn to the $\Gamma-\limsup$. Fix a measure $\mu \in \ms P(V)$ such
that $\mu(x)>0$ for all $x\in V$.  Denote by
$\color{blue} \mf D^{(0)}_1, \dots, \mf D^{(0)}_{m_0}$ the equivalent
classes of the chain $\bb X_t$ which are not singletons.

By definition, the Markov chain $\bb X_t$ reflected at
$\mf D^{(0)}_a$, $1\le a\le m_0$, is irreducible. Denote by
$\mu_{\mf D^{(0)}_a}$ the measure $\mu$ conditioned to $\mf D^{(0)}_a$
defined by equation \eqref{57}. Let
$H_a \colon \mf D^{(0)}_a \to \bb R$ be the function given by Lemma
\ref{l02} which turns $\mu_{\mf D^{(0)}_a}$ a stationary state for the
Markov chain induced by
$\mf M_{H_a} \mf R_{\mf D^{(0)}_a}\, \bb L^{(0)}$, the generator
$\mf R_{\mf D^{(0)}_a}\, \bb L^{(0)}$ tilted by $H_a$.

By Corollary \ref{l38}, 
\begin{equation}
\label{12}
\ms I^{(0)} (\mu) \;=\;
-\, \sum_{a=1}^{m_0} \sum_{x\in \ms D_a} \sum_{y\in \ms
D_a\setminus \{x\}}
\mu(x) \, \bb R_{H_a} (x,y) 
\;+\;
\sum_{x\in V} \sum_{y\in V\setminus \{x\}} \mu(x)\, \bb R_0(x,y)\;,
\end{equation}
where $\bb R_{H_a} (x,y) = \bb R_{0} (x,y) \, e^{H_a(y) - H_a(x)} $

\begin{lemma}
\label{l09}
For all $\mu\in \ms P(V)$,
\begin{equation*}
\limsup_{n\to\infty} \ms I_n(\mu) \;\le\; \ms I^{(0)}(\mu)\;. 
\end{equation*}
\end{lemma}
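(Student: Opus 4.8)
The plan is to bound $\ms I_n(\mu)$ from above by a single, $n$-independent ``cost of a flow'' and to choose that flow so that its cost converges to the right-hand side of \eqref{12}. We may assume $\ms I^{(0)}(\mu)<\infty$, and we treat first the generic case in which $\mu(x)>0$ for every $x\in V$. This is precisely the situation set up in the paragraph preceding the lemma, so the tilts $H_a\colon \mf D^{(0)}_a\to\bb R$ on the non-singleton equivalence classes $\mf D^{(0)}_a$ of $\bb X_t$, together with the identity \eqref{12}, are at our disposal; the extension to an arbitrary $\mu$ then follows by approximating $\mu$ with full-support measures.

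The device is an elementary Legendre--Young inequality. Since $e^{-H(x)}(\ms L_n e^{H})(x)=\sum_y R_n(x,y)[e^{H(y)-H(x)}-1]$, definition \eqref{35} reads $\ms I_n(\mu)=\sup_H \sum_{x,y}\mu(x)R_n(x,y)\,[\,1-e^{H(y)-H(x)}\,]$. Put $\phi(q,r):=q\log(q/r)-q+r$, so that $\phi(q,r)=\sup_{s\in\bb R}\{qs-r(e^{s}-1)\}$ and hence $r(1-e^{s})\le \phi(q,r)-qs$ for all $s$. Applying this with $s=H(y)-H(x)$, $r=\mu(x)R_n(x,y)$ and $q=Q(x,y)$, for an arbitrary nonnegative flow $Q$ on $E$, and summing over $(x,y)$, I get
\begin{equation*}
\sum_{x,y}\mu(x)R_n(x,y)\,[\,1-e^{H(y)-H(x)}\,]\;\le\;
\sum_{x,y}\phi\big(Q(x,y),\mu(x)R_n(x,y)\big)\;-\;\sum_{x,y}Q(x,y)\,[\,H(y)-H(x)\,]\;.
\end{equation*}
If $Q$ is \emph{divergence-free}, i.e. $\sum_y Q(x,y)=\sum_y Q(y,x)$ for all $x$, the last sum vanishes and the bound becomes uniform in $H$. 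Taking the supremum over $H$ yields, for every divergence-free $Q\ge 0$ and every $n$, the inequality $\ms I_n(\mu)\le \sum_{x,y}\phi\big(Q(x,y),\mu(x)R_n(x,y)\big)$.

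It remains to produce one good flow. I set $Q(x,y):=\mu(x)\,\bb R_0(x,y)\,e^{H_a(y)-H_a(x)}$ when $x$ and $y$ lie in a common class $\mf D^{(0)}_a$, and $Q(x,y):=0$ otherwise; note $Q$ does not depend on $n$. By the defining property of $H_a$ (Lemma \ref{l02}), the conditioned measure $\mu_{\mf D^{(0)}_a}$ is stationary for the tilted reflected generator $\mf M_{H_a}\mf R_{\mf D^{(0)}_a}\,\bb L^{(0)}$, whose rates are exactly $\bb R_{H_a}(x,y)=\bb R_0(x,y)e^{H_a(y)-H_a(x)}$; this is precisely divergence-freeness of $Q$ on each $\mf D^{(0)}_a$, and since $Q$ carries no mass across classes it is divergence-free on $V$. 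Letting $n\to\infty$ with $R_n\to\bb R_0$, using continuity of $\phi$ on $\{r>0\}$ and $\phi(0,r)=r$: on in-class edges the summand tends to $\mu(x)\bb R_{H_a}(x,y)[H_a(y)-H_a(x)]-\mu(x)\bb R_{H_a}(x,y)+\mu(x)\bb R_0(x,y)$, whose first part sums to zero over each class by the same stationarity, while on every other edge $Q=0$ and the summand tends to $\mu(x)\bb R_0(x,y)$. Adding the two contributions reproduces $-\sum_a\sum_{x,y\in \mf D^{(0)}_a}\mu(x)\bb R_{H_a}(x,y)+\sum_{x,y}\mu(x)\bb R_0(x,y)=\ms I^{(0)}(\mu)$ by \eqref{12}, whence $\limsup_n \ms I_n(\mu)\le \ms I^{(0)}(\mu)$.

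The main obstacle this argument sidesteps is the blow-up of the optimal tilt: the maximizing $H$ in \eqref{35} typically diverges along edges on which $R_n\to 0$ (transient or cross-class edges, as one already sees in a two-state example), so a naive ``pass to the limit in the optimizer'' is not available. The flow formulation avoids this because upper-bounding $\ms I_n(\mu)$ needs only one admissible $Q$, and the choice $Q\equiv 0$ off the equivalence classes charges each escaping edge exactly the unbalanced cost $\phi(0,\mu(x)R_n(x,y))\to \mu(x)\bb R_0(x,y)$ appearing in \eqref{12}. The only routine point to check is that on in-class edges with $\bb R_0(x,y)=0$ both $Q$ and $\mu(x)R_n(x,y)$ tend to $0$, so those terms are harmless.
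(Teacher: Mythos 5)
Your argument for the full-support case is correct, and it takes a genuinely different route from the paper. The paper works with the $n$-dependent optimizers: it invokes the exact identity $\ms I_n(\mu)=\ms K_n(\mu)$ of Lemma \ref{l04}, controls the oscillation of the optimal tilts $H^{(b)}_n$ uniformly in $n$ via \eqref{38} and Remark \ref{rm1}, extracts a convergent subsequence, and compares the subsequential limit with the optimal limiting tilts $H_a$. You replace all of this by a single $n$-independent test object: the dual bound $\ms I_n(\mu)\le\sum_{x,y}\phi\bigl(Q(x,y),\mu(x)R_n(x,y)\bigr)$ valid for every nonnegative divergence-free flow $Q$, applied to $Q=\mu\,\bb R_{H_a}$ on each class $\mf D^{(0)}_a$, with stationarity of the conditioned measure killing both the linear term in $H$ and the entropy term in the limit. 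This avoids compactness and the a priori estimate \eqref{38} altogether, and your bookkeeping of the limit (including in-class edges with $\bb R_0(x,y)=0$) is right. What the paper's route buys instead is that it needs no new variational structure beyond what Appendix \ref{sec01} already provides; what yours buys is a shorter, softer limit passage.

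The gap is the sentence ``the extension to an arbitrary $\mu$ then follows by approximating $\mu$ with full-support measures.'' Lemma \ref{l09} is a \emph{pointwise} $\limsup$ bound at the fixed measure $\mu$, not a $\Gamma$-limsup along some sequence $\mu_k\to\mu$, and pointwise upper bounds do not pass from the interior of the simplex to its boundary by density. Concretely: knowing $\limsup_n\ms I_n(\mu_k)\le\ms I^{(0)}(\mu_k)$ for full-support $\mu_k\to\mu$, lower semicontinuity of $\ms I_n$ gives, for each fixed $n$, only $\ms I_n(\mu)\le\liminf_k\ms I_n(\mu_k)$, and the interchange of $\limsup_n$ with $\liminf_k$ requires equicontinuity of the family $(\ms I_n)$ at $\mu$, which your argument does not provide. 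The inference pattern is invalid in general: $f_n(t)=\max(1-nt,0)$ on $[0,1]$ is convex, continuous, uniformly bounded, satisfies $\limsup_n f_n(t)=0$ for every $t>0$, and yet $f_n(0)=1$. Fortunately the repair stays entirely inside your framework, and no approximation is needed: for $\mu$ with support $V_\mu\subsetneq V$, run the identical flow argument with the equivalence classes $\ms D_a$ of the limiting chain reflected at $V_\mu$ and the corresponding tilts $H_a$ of Corollary \ref{l38}. Edges within a class behave as in your computation; edges leaving a class or leaving $V_\mu$ carry $Q=0$ and contribute $\phi\bigl(0,\mu(x)R_n(x,y)\bigr)=\mu(x)R_n(x,y)\to\mu(x)\bb R_0(x,y)$; edges out of $V\setminus V_\mu$ contribute nothing since $\mu(x)=0$ forces $Q(x,y)=0$ and $\mu(x)R_n(x,y)=0$. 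The limiting value is then exactly the expression for $I_{\ms L}(\mu)=\ms I^{(0)}(\mu)$ in Corollary \ref{l38}. This direct treatment of the boundary case is in effect what the paper does as well, since its proof is phrased from the start in terms of the chain reflected at $V_\mu$.
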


\begin{proof}
Denote by $X^{\mu,n}_t$ the Markov chain $X^n_t$ reflected at $V_\mu$,
and by $\ms D^n_1, \dots, \ms D^n_{m_n}$ the equivalent classes of the
chain $X^{\mu,n}_t$ which are not singletons. By assumption
\eqref{51}, the sets $\ms D_a$ do not depend on $n$ and we may remove
the index $n$ from the notation. Moreover, since $R_n(x,y)$ converges
to $\bb R_0(x,y)$, for all $1\le a\le m_0$, there exists $1\le b\le m$
such that $\ms D^{(0)}_a \subset \ms D_b$.

By Lemma \ref{l04}, $\ms I_n(\mu) = \ms K_n(\mu)$, where
$\ms K_n(\mu)$ is given by \eqref{05b} with the rates $R$ replaced by
$R_n$. The functional $\ms K_n$ is composed of two terms. The second,
as $n\to \infty$, converges to
\begin{equation*}
\sum_{x\in V} \sum_{y\in V\setminus \{x\}} \mu(x)\, \bb R_0(x,y) \;. 
\end{equation*}

We turn to the first term of $\ms K_n$, given by
\begin{equation}
\label{43}
-\, \sum_{b=1}^{m}
\sum_{x\in \ms D_b} \sum_{y\in \ms D_b\setminus \{x\}}
\mu(x) \, R_n(x,y) \,  e^{H^{(b)}_n(y) - H^{(b)}_n(x)} \;, 
\end{equation}
where $H^{(b)}_n \colon \ms D_b \to \bb R$ is the function (unique up
to an additive constant) which turns $\mu$ a stationary state for the
chain induced by $\mf M_{H^{(b)}_n} \mf R_{\ms D_b}\, \ms L_n$ (the
generator $\mf R_{\ms D_b}\, \ms L_n$ tilted by $H^{(b)}_n$).

Since for all $1\le a\le m_0$, there exists $1\le b\le m$ such that
$\ms D^{(0)}_a \subset \ms D_b$, the sum appearing in the previous
displayed equation is bounded above by
\begin{equation}
\label{13}
-\, \sum_{b=1}^{m} \sum_a
\sum_{x\in \ms D^{(0)}_a} \sum_{y\in \ms D^{(0)}_a\setminus \{x\}}
\mu(x) \, R_n(x,y) \,  e^{H^{(b)}_n(y) - H^{(b)}_n(x)} \;,
\end{equation}
where the second sum is performed over all $1\le a\le m_0$ such that
$\ms D^{(0)}_a \subset \ms D_b$.

Fix $b$ and $a$ satisfying $\ms D^{(0)}_a \subset \ms D_b$.  By
\eqref{38}, there exists a finite constant $C^{(b)}_n$ such that
$|\, H^{(b)}_n(y) - H^{(b)}_n(x) \,| \le C^{(b)}_n$ for all
$y\not = x \in \ms D_b$. Since $R_n(x,y) \to \bb R_0(x,y)$, and
$\mu(x) >0$ for all $x \in \ms D^{(0)}_a$, as $\bb X^\mu_t$ is
irreducible in $\ms D^{(0)}_a$, by Remark \ref{rm1}, there exists a
finite constant $C_a$, independent of $n$, such that
$|\, H^{(b)}_n(y) - H^{(b)}_n(x) \,| \le C_a$ for all
$y\not = x \in \ms D^{(0)}_a$. Therefore, there exists a function
$G_a\colon \ms D^{(0)}_a \to \bb R$ and a subsequence $n'$ such that
$H_{n'} (y) - H_{n'}(x) \to G(y) - G(x)$ for all $x$,
$y\in \ms D^{(0)}_a$.

In conclusion, through a subsequence, \eqref{13} converges to
\begin{equation*}
\begin{aligned}
& -\, \sum_{a=1}^{m_0}
\sum_{x\in \ms D^{(0)}_a} \sum_{y\in \ms D^{(0)}_a\setminus \{x\}}
\mu(x) \, \bb R_0 (x,y) \,  e^{G_a(y) - G_a(x)} \\
&\quad \le\; -\, 
\sum_{a=1}^{m_0} \sum_{x\in \ms D^{(0)}_a}
\sum_{y\in \ms D^{(0)}_a\setminus \{x\}}
\mu(x) \, \bb R_0 (x,y) \,  e^{H_a(y) - H_a(x)} \;,
\end{aligned}
\end{equation*}
where $H_a$ is the function which appears in \eqref{12}. The
inequality holds because for each $1\le a\le m_0$, $H_a$ is the
function which optimises the sum. To complete the proof of the lemma,
it remains to collect the previous estimates.
\end{proof}

Next result is a consequence of the two previous lemmata. 

\begin{corollary}
\label{l31}
The functional $\ms I_n$ $\Gamma$-converge to $\ms I^{(0)}$.
\end{corollary}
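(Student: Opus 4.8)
The plan is to combine the two lemmata just proved into the definition of $\Gamma$-convergence. Corollary \ref{l31} asserts that $\ms I_n$ $\Gamma$-converges to $\ms I^{(0)}$, and this is precisely the statement of Proposition \ref{mt2}. Recall that $\Gamma$-convergence requires exactly two conditions: the $\Gamma$-$\liminf$ inequality and the $\Gamma$-$\limsup$ inequality. Lemma \ref{l37} establishes that $\ms I^{(0)}$ is a $\Gamma$-$\liminf$ for the sequence $\ms I_n$, that is, for every $\mu\in \ms P(V)$ and every sequence $\mu_n\to\mu$ we have $\liminf_n \ms I_n(\mu_n) \ge \ms I^{(0)}(\mu)$. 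This handles condition (i) directly.

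For the $\Gamma$-$\limsup$, condition (ii), I need to produce for each $\mu\in \ms P(V)$ a recovery sequence $\mu_n\to\mu$ with $\limsup_n \ms I_n(\mu_n) \le \ms I^{(0)}(\mu)$. The natural candidate is the constant sequence $\mu_n = \mu$, for which Lemma \ref{l09} gives exactly $\limsup_n \ms I_n(\mu) \le \ms I^{(0)}(\mu)$. Taking $\mu_n = \mu$ is an admissible recovery sequence since it trivially converges to $\mu$, so condition (ii) follows immediately from Lemma \ref{l09}.

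Having verified both (i) and (ii), the $\Gamma$-convergence of $\ms I_n$ to $\ms I^{(0)}$ follows by definition. I do not anticipate any genuine obstacle here: the entire content of the corollary has been deposited into the two preceding lemmata, and the only remaining work is the bookkeeping of matching each lemma to the corresponding clause in the definition of $\Gamma$-convergence. One small point worth noting is that the $\Gamma$-$\limsup$ via the constant sequence is possible only because Lemma \ref{l09} proves the bound for the untilted measure $\mu$ itself rather than requiring an approximating sequence; this is what makes the recovery sequence trivial and the proof a one-line synthesis.

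\begin{proof}
By Lemma \ref{l37}, the functional $\ms I^{(0)}$ is a $\Gamma$-$\liminf$ for the sequence $\ms I_n$, which establishes condition (i) in the definition of $\Gamma$-convergence. To verify condition (ii), fix $\mu\in \ms P(V)$ and take the constant sequence $\mu_n = \mu$, which clearly converges to $\mu$. By Lemma \ref{l09},
\begin{equation*}
\limsup_{n\to\infty} \ms I_n(\mu_n) \;=\; \limsup_{n\to\infty} \ms I_n(\mu)
\;\le\; \ms I^{(0)}(\mu)\;,
\end{equation*}
so that $\ms I^{(0)}$ is a $\Gamma$-$\limsup$ for the sequence $\ms I_n$. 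As both conditions are met, $\ms I_n$ $\Gamma$-converges to $\ms I^{(0)}$.
\end{proof}
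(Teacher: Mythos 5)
Your proof is correct and is exactly the paper's argument: the paper states Corollary \ref{l31} as an immediate consequence of Lemmata \ref{l37} and \ref{l09}, with the $\Gamma$-$\liminf$ given by Lemma \ref{l37} and the $\Gamma$-$\limsup$ obtained from Lemma \ref{l09} via the constant recovery sequence $\mu_n=\mu$, precisely as you spell out.
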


\subsection*{Proof of Theorem \ref{mt1}}

The proof is by induction in $p$. The case $p=0$ is covered by
Corollary \ref{l31}. Fix $1\le p\le \mf q$ and assume that the result
holds for $0\le p'<p$. In sake of completeness we reproduce below the
proof of the $\Gamma-\liminf$ taken from \cite[Proposition
8.4]{bgl2}.

\smallskip\noindent{\bf $\Gamma-\liminf$:} For all $\mu\in \ms P(V)$
and all sequence of probability measures $\mu_n \in \ms P(V)$ such
that $\mu_n\to \mu$,
\begin{equation}
\label{44}
\liminf_{n\to\infty} \theta^{(p)}_n\, \ms I^{(p)}_n(\mu_n) \;\ge\; \ms
I^{(p)}(\mu)\;.  
\end{equation}

Fix a probability measure
$\mu$ on $V$ and a sequence $\mu_n$ converging to $\mu$.  Suppose that
$\ms I^{(p-1)} (\mu)>0$. In this case, since
$\theta^{(p-1)}_n\, \ms I_n$ $\Gamma$-converges to $\ms I^{(p-1)}$ and
$\theta^{(p)}_n/\theta^{(p-1)}_n \to\infty$,
\begin{equation*}
\liminf_{n\to\infty} \theta^{(p)}_n\, \ms I_n(\mu_n) \;=\;
\liminf_{n\to\infty} \frac{\theta^{(p)}_n}{\theta^{(p-1)}_n}
\, \theta^{(p-1)}_n\, \ms I_n(\mu_n) \;\ge\;
\ms I^{(p-1)} (\mu)\,
\lim_{n\to\infty} \frac{\theta^{(p)}_n}{\theta^{(p-1)}_n}
\;=\; \infty\;.
\end{equation*}
On the other hand, by Lemma \ref{l39}, $\ms I^{(p)} (\mu) =
\infty$. This proves the $\Gamma-\liminf$ convergence for measures
$\mu$ such that $\ms I^{(p-1)} (\mu)>0$.

Assume that $\ms I^{(p-1)} (\mu)=0$. By Lemma \ref{l39} and
\eqref{o-79}, there exists a probability measure $\omega$ on $S_p$
such that $\mu = \sum_{j\in S_p} \omega_j\, \pi^{(p)}_j$. By
definition of $\ms I_n$,
\begin{equation*}
\ms I_n(\mu_n) \; \ge \; -\, \int_V \frac{\ms L_n u}{u}\, d\mu_n
\end{equation*}
for all $u: V \to (0,\infty)$.

Fix a function $h: \ms V^{(p)} \to (0,\infty)$ which is constant on
each $\ms V^{(p)}_j$, $j\in S_p$:
$h = \sum_{j\in S_p} \mb h(j) \, \chi_{\ms V^{(p)}_j}$. Let
$u_n\colon V\to \bb R$ be the solution of the Poisson equation
\eqref{07} with $\ms L = \ms L_n$, $\ms A = \ms V^{(p)}$ and $u = h$. By
the representation \eqref{71}, it is clear that
$u_n(x) \in (0,\infty)$ for all $x\in V$.

Since $u_n$ is harmonic on $V \setminus \ms V^{(p)}$ and $u_n = h$ on
$\ms V^{(p)}$, by \eqref{09}, the right-hand side of the previous
displayed equation with $u=u_n$ is equal to
\begin{equation*}
-\, \int_{\ms V^{(p)}} \frac{\ms L_n u_n}{u_n}\, d\mu_n
\;=\; -\, \int_{\ms V^{(p)}} \frac{\ms L_n u_n}{h}\, d\mu_n
\;=\; -\, \int_{\ms V^{(p)}}
\frac{(\mf T_{\ms V^{(p)}} \ms L_n)\, h}{h}\, d\mu_n \;.
\end{equation*}
Since $h$ is constant on each set $\ms V^{(p)}_j$ (and equal to $\mb
h(j)$), the last integral is equal to
\begin{equation*}
-\, \sum_{j,k\in S_p} \frac{[\, \mb h(k) - \mb h(j)\,]}{\mb h(j)}\,
\sum_{x\in \ms V^{(p)}_j} \pi_n(x)\, \frac{\mu_n (x)}{\pi_n(x)}\,
R^{(p)}_n(x, \ms V^{(p)}_k)\;,
\end{equation*}
where
$R^{(p)}_n(x, \ms V^{(p)}_k) = \sum_{y\in \ms V^{(p)}_k} R^{(p)}_n(x,
y)$. By \eqref{o-58},
$\pi_n(x)/\pi_n(\ms V^{(p)}_j) \to \pi^{(p)}_j(x)$ for all
$x\in \ms V^{(p)}_j$. Thus, since
$\mu_n \to \mu = \sum_{j\in S_p} \omega_j\, \pi^{(p)}_j$,
\begin{equation*}
\lim_{n\to \infty} \pi_n(\ms V^{(p)}_j)\, \frac{\mu_n (x)}{\pi_n(x)}
\;=\; \omega_j \quad\text{for all $x\in \ms V^{(p)}_j$} \;.
\end{equation*}
Therefore, by \eqref{20}, \eqref{o-34}, as $n\to\infty$, the
penultimate expression multiplied by $\theta^{(p)}_n $ converges to
\begin{equation*}
-\, \sum_{j\in S_p} \omega_j \, \frac{1}{\mb h(j)}\,
\sum_{k\in S_p} r^{(p)}(j,k) \, [\, \mb h(k) - \mb h(j)\,]
\;=\; -\, \sum_{j\in S_p} \omega_j \, \frac{\bb L^{(p)} \mb h}{\mb h} \;.
\end{equation*}

Summarising, we proved that
\begin{equation*}
\liminf_{n\to\infty} \theta^{(p)}_n\, \ms I_n(\mu_n) \; \ge \;
\sup_{\mb h} \,  -\, \sum_{j\in S_p} \omega_j \, \frac{\bb L^{(p)} \mb
h}{\mb h} \;,
\end{equation*}
where the supremum is carried over all functions
$\mb h: S_p \to (0,\infty)$. By \eqref{40}, \eqref{o-83b}, the
right-hand side is precisely $\ms I^{(p)}(\mu)$, which completes the
proof of the $\Gamma-\liminf$. 

\smallskip\noindent{$\bs \Gamma-\limsup$.}  Fix
$\mu\in \ms P(V)$.  If $\ms I^{(p)}(\mu) = \infty$, there is
nothing to prove. Assume, therefore, that
$\mu = \sum_{j\in S_p} \omega_j \, \pi^{(p)}_j$ for some
$\omega \in \ms P(S_p)$.

By Lemmata \ref{l24} and \ref{l25}, it is enough to prove the theorem
for measures $\mu = \sum_{j\in S_p} \omega_j \, \pi^{(p)}_j$ for some
$\omega \in \ms P(S_p)$ such that $\omega_j>0$ for all $j\in S_p$.
Fix such a measure $\mu$.  Let $\mu_n\in \ms P(\ms V^{(p)})$ be the
measure given by \eqref{26}. By Corollary \ref{l22} and Lemma
\ref{l11}, $\mu_n\to \mu$ and
\begin{equation}
\label{30c}
\limsup_{n\to\infty} \theta^{(p)}_n\, \ms I^{(p)}_n(\mu_n) \;\le\; \ms
I^{(p)}(\mu)\;.  
\end{equation}

Since the trace process $Y^{n,p}_t$ is irreducible and $\mu_n(x)>0$
for all $x\in \ms V^{(p)}$, by Lemma \ref{l02}, there exists
$u_n\colon \ms V^{(p)} \to (0,\infty)$ such that
\begin{equation*}
\ms I^{(p)}_n(\mu_n) \;=\;  - \,
\int_{\ms V^{(p)}} \frac{1}{u_n}\, 
\big [ \, (\, \mf T_{\ms V^{(p)}} \ms L_n)\,  u_n \,\big]\,
\; d \mu_n \;.
\end{equation*}
Denote by $v_n$ the harmonic extension of $u_n$ to $V$ given by
\eqref{07} with $\ms A$, $\ms L$ replaced by $\ms V^{(p)}$, $\ms L_n$,
respectively. Let $\nu_n$ be the stationary state of the tilted
generator $\mf M_{v_n}\, \ms L_n$. By Proposition \ref{l05},
\begin{equation}
\label{30b}
\ms I_n(\nu_n) \;\le\; \ms I^{(p)}_n (\mu_n)\;.  
\end{equation}
In view of \eqref{30c}, \eqref{30b}, it remains to show that
$\nu_n \to \mu$.

As $\nu_n$ is the stationary state of the Markov chain $X^{(n)}_t$
tilted by $v_n$, by \cite[Proposition 6.3]{bl2}, $\nu_n$ conditioned
to $\ms V^{(p)}$ is the stationary state of the Markov chain induced
by the generator $\mf T_{\ms V^{(p)}}\, \mf M_{v_n}\, \ms L_n$. By
Lemma \ref{l06} this generator coincides with
$\mf M_{u_n}\, \mf T_{\ms V^{(p)}}\,\ms L_n$. By definition, $\mu_n$
is the stationary state of this later Markov chain. Hence,
$\mu_n (\,\cdot\,) = \nu_n (\,\cdot\,|\, \ms V^{(p)}\,)$.

Since $\mu_n \to \mu$ and
$\mu_n (\,\cdot\,) = \nu_n (\,\cdot\,|\, \ms V^{(p)}\,)$, it is enough
to show that $\nu_n (\,\ms V^{(p)}\,) \to 1$.  Assume, by
contradiction, that $\limsup_n \nu_n(z) >0$ for some
$z\in V \setminus \ms V^{(p)}$. Since $\ms P(V)$ is compact for the
weak topology, consider a subsequence, still denoted by $\nu_n$, such
that $\nu_n \to \nu \in \ms P(V)$, $\nu(z)>0$. By the
$\Gamma-\liminf$,
\begin{equation*}
\liminf_{n\to\infty} \theta^{(p)}_n\, \ms I_n(\nu_n) \;\ge\; \ms
I^{(p)}(\nu)\;. 
\end{equation*}
Since $\nu(z)>0$, $z\in V\setminus \ms V^{(p)}$,
$\ms I^{(p)}(\nu) = +\infty$. However, by \eqref{30c}, \eqref{30b},
\begin{equation*}
\limsup_{n\to\infty} \theta^{(p)}_n\, \ms I_n(\nu_n) \;\le\;
\ms I^{(p)}(\mu) \;=\; \bb I^{(p)} (\omega) \;<\; \infty \;.
\end{equation*}
Hence, $\nu_n (\,\ms V^{(p)}\,) \to 1$ and $\nu_n \to \mu$, which
completes the proof of the theorem.  \qed

\appendix

\section{The rate function}
\label{sec01}

Fix a finite set $V$. Consider a $V$-valued continuous-time Markov
chain $\color{blue} (X_t: t\ge 0)$, and denote by $\ms L$ its
generator. The jump rates are represented by
$\color{blue} R(\,\cdot\,,\,\cdot\,)$, so that
\begin{equation}
\label{36}
{\color{blue} (\ms L f)(x)}
\;=\; \sum_{y\in V} R(x,y)\, \{\,
f(y)\,-\, f(x)\,\}
\end{equation}
for all functions $f\colon V\to \bb R$.  Denote by
$\color{blue} \lambda(x)$, $x\in V$, the holding rates and by
$\color{blue} p(x,y)$, $x$, $y\in V$, the jump probabilities, so that
$R(x,y) = \lambda (x) \, p(x,y)$. Mind that we do not suppose the
process to be irreducible.  We assume, however, that $\lambda(x)>0$
for all $x\in V$. The case where some holding rates might vanish is
considered at the end of this section.

Denote by $\color{blue} \ms P (V)$ the space of probability measures
on $V$ endowed with the weak topology. For a function
$H\colon V \to \bb R$, define $J_H \colon \ms P (V) \to \bb R$ by
\begin{equation}
\label{37}
{\color{blue} J_H(\mu)} \; : =\; -\, \int_V e^{-H} \ms L e^H\, d\mu
\;=\; -\, \sum_{x, y\in V} \mu(x) \, R(x,y) \, \big[\, e^{H(y) - H(x)}
-1 \,\big] \;,
\end{equation}
and let
\begin{equation}
\label{19}
{\color{blue} I(\mu)} \;: =\; \sup_{H} J_H(\mu)\;,
\end{equation}
where the supremum is carried over all functions
$H\colon V \to \bb R$.

To stress the dependence of the functionals $J_H$ and $I$ on the
generator $\ms L$, we sometimes denote them by
$\color{blue} J_{\ms L, H}$ and $\color{blue} I_{\ms L}$,
respectively.  Next result collects simple properties of the
functionals $J_H$ and $I$.

\begin{lemma}
\label{l01}
For each $\mu\in \ms P (V)$, the functional $H\mapsto J_H(\mu)$ is
concave, and $J_{H+c} (\mu) = J_H(\mu)$ for all constants $c$.  The
functional $I$ is convex, lower-semi\-con\-tin\-uous, non-negative,
and bounded by $\sum_{x\in V} \mu(x) \, \lambda(x)$.
\end{lemma}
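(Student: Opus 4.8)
The plan is to extract all six assertions directly from the two expressions for $J_H$ in \eqref{37}, exploiting that this functional is linear in $\mu$ for each fixed $H$ and concave in $H$ for each fixed $\mu$. I would begin with the two statements about $H \mapsto J_H(\mu)$. The translation invariance is immediate: the right-hand side of \eqref{37} involves $H$ only through the differences $H(y) - H(x)$, so replacing $H$ by $H+c$ changes nothing, giving $J_{H+c}(\mu) = J_H(\mu)$. For concavity, I would write
\begin{equation*}
J_H(\mu) \;=\; \sum_{x,y\in V} \mu(x)\, R(x,y) \;-\;
\sum_{x,y\in V} \mu(x)\, R(x,y)\, e^{H(y)-H(x)}\;.
\end{equation*}
The first sum does not depend on $H$, while in the second each summand $-\,e^{H(y)-H(x)}$ is concave in $H$, being minus the exponential of a linear functional of $H$; since $\mu(x)\,R(x,y)\ge 0$, the whole expression is a nonnegative combination of concave functions, hence concave.

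Next I would turn to $I = \sup_H J_H$. For fixed $H$, the second representation in \eqref{37} exhibits $\mu \mapsto J_H(\mu)$ as a linear, hence continuous, function on the finite-dimensional simplex $\ms P(V)$. A pointwise supremum of a family of linear continuous functions is automatically convex and lower-semicontinuous, which yields those two properties of $I$ at once. Non-negativity follows by testing $H\equiv 0$ in \eqref{19}: then $e^{H(y)-H(x)}-1 = 0$ for all $x,y$, so $J_0(\mu)=0$ and therefore $I(\mu)\ge 0$.

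For the upper bound I would rewrite
\begin{equation*}
J_H(\mu) \;=\; \sum_{x\in V} \mu(x) \sum_{y\neq x} R(x,y)\,
\big[\, 1 - e^{H(y)-H(x)} \,\big] \;,
\end{equation*}
and use that $e^{H(y)-H(x)}>0$, so that $1 - e^{H(y)-H(x)} \le 1$. This gives $J_H(\mu) \le \sum_{x\in V}\mu(x)\sum_{y\neq x} R(x,y) = \sum_{x\in V}\mu(x)\,\lambda(x)$ uniformly in $H$, and taking the supremum over $H$ yields the claimed bound. There is no genuine obstacle here; the only point worth a moment's care is that the interchange between the supremum over $H$ and the topological properties of $I$ requires nothing beyond the finiteness of $V$, which guarantees that each $J_H$ is truly continuous in $\mu$ and that the supremum of such functions is convex and lower-semicontinuous with no additional hypotheses.
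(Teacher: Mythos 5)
Your proof is correct, and it follows exactly the elementary route the paper has in mind: the paper states Lemma \ref{l01} without proof, as a collection of ``simple properties,'' and your arguments (translation invariance from the dependence on differences $H(y)-H(x)$, concavity from $-e^{\text{linear}}$, convexity and lower semicontinuity of $I$ as a supremum of linear functionals on the finite-dimensional simplex, non-negativity from $J_0(\mu)=0$, and the bound from $1-e^{H(y)-H(x)}\le 1$ together with $\lambda(x)=\sum_{y}R(x,y)$) are precisely the standard ones being taken for granted. Nothing is missing.
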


Recall that a measure $\mu\in \ms P(V)$ is a stationary state for the
Markov chain induced by the generator $\ms L$ if
$\int_V (\ms L f) \, d\mu =0$ for all function $f\colon V\to \bb R$.
As we do not assume the chain to be irreducible, the stationary state
may not be unique.

The Euler-Lagrange equation for $I$ reads as
\begin{equation}
\label{11}
\int_V (\mf M_H\, \ms L) \, G \, d\mu \;=\; 0 \quad \text{for all}\;\;
G\colon V \to \bb R\;.
\end{equation}
In this formula, for a function $H\colon V\to \bb R$,
$\mf M_H\, \ms L$ represents the tilted generator given by
\begin{equation}
\label{56}
{\color{blue} [\,(\mf M_H\, \ms L) \, f\, ]\, (x)} \;=\;
\sum_{y\in V}
e^{- H(x)} \, R(x,y) \, e^{H(y)}  \,  \big[\, f(y) \,-\, f(x)
\,\big] 
\end{equation}
for $f\colon V \to \bb R$. Let
$\color{blue} R_H (x,y)\, :=\, e^{- H(x)} \, R(x,y) \, e^{H(y)} $.
Next result clarify the meaning of the Euler-Lagrange equation
\eqref{11}. 

\begin{lemma}
\label{l10}
A probability measure $\mu$ in $V$ is a stationary state for the
Markov chain induced by the generator $\mf M_H\ms L$ if, and only if,
\begin{equation}
\label{08}
I(\mu) \;=\; J_H(\mu)\;.
\end{equation}
\end{lemma}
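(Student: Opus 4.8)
The plan is to read the identity $I(\mu) = J_H(\mu)$ as the statement that $H$ is a global maximiser of the concave functional $G \mapsto J_G(\mu)$, and to recognise the stationarity of $\mu$ for $\mf M_H\, \ms L$ as exactly the first-order optimality (Euler-Lagrange) condition \eqref{11} at this maximiser. Since $G \mapsto J_G(\mu)$ is concave by Lemma \ref{l01} and, being a finite sum of exponentials, is smooth on the finite-dimensional vector space of all functions $H \colon V \to \bb R$, a point $H$ is a global maximiser if and only if every directional derivative vanishes there. Thus the lemma reduces to computing the first variation of $J_H$ and matching it with the stationarity condition.

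First I would compute, for a fixed direction $G \colon V \to \bb R$, the derivative of $t \mapsto J_{H+tG}(\mu)$ at $t=0$. Differentiating the explicit expression \eqref{37} under the finite sum gives
\[
\frac{d}{dt} J_{H+tG}(\mu) \Big|_{t=0}
\;=\; -\sum_{x,y \in V} \mu(x)\, R(x,y)\, e^{H(y) - H(x)}\,
\big[\, G(y) - G(x)\,\big]\;.
\]
Recalling that $R_H(x,y) = e^{-H(x)}\, R(x,y)\, e^{H(y)}$ and the definition \eqref{56} of the tilted generator, the inner sum over $y$ is precisely $(\mf M_H\, \ms L)\, G$ evaluated at $x$, so this directional derivative equals $-\int_V (\mf M_H\, \ms L)\, G \, d\mu$.

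It then remains to combine the two ingredients. By concavity and smoothness, $I(\mu) = J_H(\mu)$ holds---that is, $H$ maximises $G \mapsto J_G(\mu)$---if and only if the directional derivative computed above vanishes for every $G$, i.e.\ if and only if $\int_V (\mf M_H\, \ms L)\, G \, d\mu = 0$ for all $G \colon V \to \bb R$. By the definition of a stationary state recalled just before \eqref{11}, this is exactly the assertion that $\mu$ is stationary for the Markov chain induced by $\mf M_H\, \ms L$, and both implications of the lemma follow at once. I do not anticipate a genuine obstacle; the only point needing a little care is the ``only if'' direction, where one uses that the maximisation is over the unconstrained vector space of all $H$, so that a maximiser necessarily has vanishing gradient---concavity then supplies the converse, yielding the equivalence rather than a one-sided implication.
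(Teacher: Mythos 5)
Your proposal is correct and follows essentially the same route as the paper: both identify stationarity of $\mu$ for $\mf M_H\,\ms L$ with the vanishing of the first variation of the concave functional $G\mapsto J_G(\mu)$, the directional-derivative computation being identical. The only cosmetic difference is that where you invoke the abstract fact that a smooth concave function is globally maximised exactly where its gradient vanishes, the paper writes this out by hand, substituting $F=G-H$ and using stationarity together with the pointwise inequality $e^a-1-a\ge 0$ (which is precisely the first-order concavity inequality) to show $J_G(\mu)\le J_H(\mu)$ for all $G$.
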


\begin{proof}
Suppose that $\mu$ is a stationary state for the Markov chain induced
by the generator $\mf M_H \ms L$. Then, for all functions
$G\colon V\to \bb R$,
\begin{equation*}
\begin{aligned}
J_G(\mu) \;-\; J_H(\mu) \; & =\;
-\, \sum_{x, y\in V} \mu(x)\, R (x,y)  \,
\big[\, e^{G(y) \,-\, G(x)} \,-\, e^{H(y) \,-\, H(x)} \,\big] \\
& =\;
-\, \sum_{x, y\in V} \mu(x)\, R_H (x,y)  \,
\big[\, e^{F(y) \,-\, F(x)} \,-\, 1 \,\big] \;,
\end{aligned}
\end{equation*}
where $F= G-H$. Since $\mu$ is a stationary state for the Markov chain
induced by the generator $\mf M_H \ms L$, the previous expression is equal
to
\begin{equation*}
-\, \sum_{x, y\in V} \mu(x)\, R_H (x,y)  \,
\big\{\, e^{F(y) \,-\, F(x)} \,-\, 1 \,-\, [\, F(y) \,-\, F(x)\,]
\,\big\}\;.
\end{equation*}
This expression is negative because $a\mapsto e^a - 1 - a$ is
positive. This proves that $I(\mu) = \sup_G J_G(\mu) \le J_H(\mu)$, as
claimed.

Conversely, suppose that \eqref{08} holds. Fix $G:V\to \bb R$. Then,
the function $a\mapsto J_{H + a G} (\mu)$ assumes a maximum at
$a=0$. Its derivative at $a=0$ is given by
$\int_V (\mf M_H \ms L) \, G\, d\mu$. Hence, \eqref{11} holds for all
$G$ yielding that $\mu$ is stationary for the Markov chain induced by
the generator $\mf M_H \ms L$.
\end{proof}

\begin{lemma}
\label{l02}
Assume that the Markov chain induced by the generator $\ms L$ is
irreducible and that $\mu(x)>0$ for all $x\in V$. Then, there exists a
function $H\colon V\to \bb R$, unique up to an additive constant, such
that
\begin{equation*}
I(\mu) \;=\; J_{H}(\mu)\;.
\end{equation*}
Moreover,
\begin{equation}
\label{45}
I(\mu) \;=\; \sum_{x, y\in V} \mu(x) \, R(x,y) \,
\Big\{\, [\, H(y) - H(x) \, ] \, e^{H(y) - H(x)}
\,-\, e^{H(y) - H(x)} \,+\, 1\, \Big\} \;,
\end{equation}
and
\begin{equation}
\label{38}
\max_{x,y\in V} |\, H (y) - H(x)\,|\, \le\,
|V|\, \ln\, \frac{1 \,+\, \sum_{x, y\in V} \mu(x) \, R(x,y)}
{\min_{z,w} \mu(z)\, R(z,y)} \;,
\end{equation}
where the minimum is performed over all edges $(z,w)$ such that
$R(z,w)>0$. 
\end{lemma}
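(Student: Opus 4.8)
The plan is to regard $I(\mu)=\sup_H J_H(\mu)$ from \eqref{19} as a concave maximization on $\bb R^V$ modulo additive constants, and to proceed through four stages: existence of a maximizer, its uniqueness, the closed form \eqref{45}, and finally the a priori bound \eqref{38}. By Lemma \ref{l01} the map $H\mapsto J_H(\mu)$ is concave and invariant under $H\mapsto H+c$, so I would fix a base point $x_0\in V$ and maximize over the hyperplane $\{H:H(x_0)=0\}$; since $J_0(\mu)=0$ we already have $I(\mu)\ge 0$. Writing $S:=\sum_{x,y}\mu(x)R(x,y)$ and $J_H(\mu)=S-\sum_{x,y}\mu(x)R(x,y)\,e^{H(y)-H(x)}$, it suffices to prove coercivity, i.e. $J_H(\mu)\to-\infty$ as $\|H\|\to\infty$ on this hyperplane, after which a continuous coercive function on $\bb R^{|V|-1}$ attains its supremum. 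To see coercivity, take $H_n$ with $t_n:=\|H_n\|\to\infty$ and pass to a subsequence so that $\hat H_n:=H_n/t_n\to\hat H$ with $\hat H(x_0)=0$ and $\|\hat H\|=1$; in particular $\hat H$ is non-constant. Letting $A$ be the non-empty proper subset on which $\hat H$ is maximal, irreducibility forces a positive-rate edge $(x,y)$ with $x\notin A$, $y\in A$ (otherwise $A$ could not be reached from $V\setminus A$), so that $\hat H(y)-\hat H(x)>0$ and hence $H_n(y)-H_n(x)\to+\infty$. As $\mu(x)R(x,y)>0$, the single summand indexed by this edge blows up and $J_{H_n}(\mu)\to-\infty$. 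I expect this coercivity step to be the main obstacle, since it is precisely here that both hypotheses, irreducibility and $\mu(x)>0$, are indispensable.

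For uniqueness up to a constant I would establish strict concavity transverse to constants. For a direction $G$ one computes $\tfrac{d^2}{ds^2}J_{H+sG}(\mu)=-\sum_{x,y}\mu(x)R(x,y)\,e^{H(y)-H(x)+s(G(y)-G(x))}\,[G(y)-G(x)]^2\le 0$, with equality only when $G(y)=G(x)$ on every positive-rate edge. Because the jump graph is strongly connected, this forces $G$ to be constant, so two maximizers can differ only by an additive constant.

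For the formula \eqref{45}, since $H$ maximizes the concave functional, the first-order condition $\tfrac{d}{ds}J_{H+sG}(\mu)\big|_{s=0}=0$ holds for every $G$, which is exactly the Euler--Lagrange equation \eqref{11}; equivalently, by Lemma \ref{l10}, $\mu$ is stationary for $\mf M_H\,\ms L$ and $I(\mu)=J_H(\mu)$. Taking $G=H$ gives $\sum_{x,y}\mu(x)R(x,y)\,e^{H(y)-H(x)}\,[H(y)-H(x)]=0$, and adding this vanishing quantity to $J_H(\mu)=\sum_{x,y}\mu(x)R(x,y)\,[1-e^{H(y)-H(x)}]$ produces precisely the integrand $[H(y)-H(x)]\,e^{H(y)-H(x)}-e^{H(y)-H(x)}+1$ of \eqref{45}.

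Finally, for the bound \eqref{38}, the identity just used together with $I(\mu)=J_H(\mu)\ge 0$ yields $\sum_{x,y}\mu(x)R(x,y)\,e^{H(y)-H(x)}=S-I(\mu)\le S$. Every summand is non-negative, hence each is at most $S$; for a positive-rate edge $(x,y)$ this reads $\mu(x)R(x,y)\,e^{H(y)-H(x)}\le S$, and dividing by $\mu(x)R(x,y)\ge m:=\min_{(z,w):R(z,w)>0}\mu(z)R(z,w)$ gives $H(y)-H(x)\le\ln(S/m)$. For arbitrary $u,v\in V$ I would then join them by a directed positive-rate path of length at most $|V|-1$, available by irreducibility, and sum the per-edge bounds to obtain $H(v)-H(u)\le(|V|-1)\ln(S/m)\le|V|\ln\frac{1+S}{m}$, which is \eqref{38}.
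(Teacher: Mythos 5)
Your proposal is correct, and two of its three main steps are exactly the paper's arguments: uniqueness via strict concavity of $s\mapsto J_{H+sG}(\mu)$ transverse to constants (the paper phrases it with the segment $F_\theta=\theta H+(1-\theta)G$ and the vanishing second derivative), and formula \eqref{45} via the stationarity of $\mu$ for $\mf M_H\,\ms L$ (Lemma \ref{l10}) applied with test function $H$ and added to $J_H(\mu)$. The genuine difference is in how existence and the bound \eqref{38} are obtained. The paper runs a single quantitative estimate on a maximizing sequence $H_n$ normalized by $H_n(x_0)=0$: since eventually $J_{H_n}(\mu)\ge -1$, every positive-rate edge satisfies $\mu(x)R(x,y)\,e^{H_n(y)-H_n(x)}\le 1+S$, where $S=\sum_{x,y}\mu(x)R(x,y)$, whence a uniform oscillation bound through self-avoiding paths; this one estimate yields simultaneously compactness of the sequence (hence existence of $H$) and, passing to the limit, \eqref{38}. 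You instead prove existence softly, by coercivity of $J_\cdot(\mu)$ on the hyperplane $\{H(x_0)=0\}$ via a blow-up argument: the positive-rate edge entering the argmax set of the normalized limit $\hat H$, which exists by irreducibility and carries weight $\mu(x)R(x,y)>0$, forces one summand to explode. Only afterwards do you derive \eqref{38} a posteriori from optimality, using $\sum_{x,y}\mu(x)R(x,y)\,e^{H(y)-H(x)}=S-I(\mu)\le S$, the per-edge bound $\ln(S/m)$ with $m=\min_{R(z,w)>0}\mu(z)R(z,w)$, and a path of at most $|V|-1$ edges. Your organization cleanly separates the qualitative existence statement from the quantitative bound, and in fact gives the slightly sharper constant $(|V|-1)\ln(S/m)$, which you correctly relax to the stated right-hand side of \eqref{38}; the paper's organization is more economical, one estimate doing double duty. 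Both arguments use irreducibility and the positivity of $\mu$ at the same points, and both support Remark \ref{rm1} and the later uses of \eqref{38} (Lemmata \ref{l09} and \ref{l27}), since in either version the constants depend on the data only through $S$, $m$ and the edge set.
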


\begin{proof}
Since $I(\mu)$ is bounded, there exists a sequence
$(H_n \colon n\ge 1)$ of functions $H_n \colon V\to \bb R$ such that
\begin{equation*}
I(\mu) \;=\; \lim_{n\to \infty} J_{H_n}(\mu)\;.
\end{equation*}
Fix $x_0 \in V$. Since $J_{H+c} (\mu) = J_H(\mu)$, redefine the
sequence $H_n$ so that $H_n(x_0) =0$ for all $n\ge 1$.

\smallskip\noindent{\it Claim 1:} The sequence $H_n$ is uniformly
bounded.

By definition of the sequence $H_n$ and since $I$ is positive, there
exists $n_0\ge 1$ such that
\begin{equation*}
\sum_{x, y\in V} \mu(x) \, R(x,y) \, \big[\, e^{H_n(y) - H_n(x)}
-1 \,\big] \;\le\;  1
\end{equation*}
for all $n\ge n_0$. Hence,
\begin{equation*}
\sum_{x, y\in V} \mu(x) \, R(x,y) \,e^{H_n(y) - H_n(x)}
\;\le\; C_0 \;: =\;  1 \,+\,
\sum_{x, y\in V} \mu(x) \, R(x,y) \;.
\end{equation*}
Thus, 
\begin{equation}
\label{01}
H_n(y) \,-\, H_n(x) \;\le\; C_1\;:=\;
\ln\, \frac{C_0}{\min_{x,y} \mu(x)\, R(x,y)} 
\end{equation}
for all $n\ge n_0$ and all edges $(x,y)$ such that $R(x,y)>0$.  In
this equation the minimum is performed over all edges $(x,y)$ such
that $\mu(x)\, R(x,y) >0$.

Fix $x\in V\setminus \{x_0\}$. Since the process is irreducible, there
exists a self-avoiding path $x_0, x_1, \dots, x_k=x$ such that
$R(x_i, x_{i+1})>0$ for all $0\le i<k$. Hence, by \eqref{01}, $H_n(x)
= H_n(x_k) - H_n(x_0) \le C_1 \, k \,\le\, C_2 \,:=\, C_1 \, |V|$.

Conversely, there exists a self-avoiding path
$x=y_0, y_1, \dots, y_j=x_0$ such that $R(y_i, y_{i+1})>0$ for all
$0\le i<j$. Hence, by \eqref{01},
$-H_n(x) = H_n(x_0) -H_n(x) = H_n(y_j) - H_n(y_0) \le C_1 \, j \le C_1
\, |V| \,=\, C_2$, which proves Claim 1.

As the sequence $H_n$ is uniformly bounded, we may extract a
subsequence, still denoted by $H_n$, which converges pointwisely to
a function $H$. By definition of the sequence $H_n$ and by continuity, 
\begin{equation*}
I(\mu) \;=\; \lim_{n\to\infty} J_{H_n}(\mu) \;=\; J_{H}(\mu)\;,
\end{equation*}
as asserted. Moreover,
$\max_{x,y\in V} |\, H (y) - H(x)\,|\, \le\, C_2$, proving \eqref{38}.

We turn to the proof of uniqueness. Assume that there are two
functions, denoted by $H$ and $G$, which minimize. Let
$F_\theta = \theta H + (1-\theta) G$, $0\le \theta \le 1$.  By
concavity of the functional $J$, for all $0\le \theta \le 1$,
\begin{equation*}
I(\mu) \;\ge\; J_{F_\theta}(\mu) \;\ge\;
\theta \, J_{H}(\mu) \;+\;  (1-\theta) \, J_{G}(\mu) \;=\;
I(\mu) \; .
\end{equation*}
Hence, $\theta \mapsto J_{F_\theta}(\mu)$ is constant. Taking the
second derivative yields that
\begin{equation*}
\sum_{x, y\in V} \mu(x) \, R(x,y) \,
\big\{ \, \big[\, G(y) - G (x) \,\big] \,-\,
\big[\, H (y) - H (x)\,\big]  \,\big\}^2
e^{F_\theta (y) - F_\theta (x)} \;=\; 0
\end{equation*}
for all $0<\theta<1$.  Hence, $G(y) - G (x) \,=\, H (y) - H (x)$ if
$\mu(x)\, R(x, y)>0$. As the process is irreducible and the measure
positive, $G= H + c$ for some constant $c\in\bb R$.

To show the validity of \eqref{45}, note that
\begin{equation*}
I(\mu) \;=\; J_{H}(\mu) \;=\;
\sum_{x, y\in V} \mu(x) \, R(x,y) \,
\big\{\, 1 \,-\, e^{H(y) - H(x)} \, \big\}\;.
\end{equation*}
Since, by Lemma \ref{l10}, $\mu$ is a stationary state for the
Markov chain induced by the generator $\mf M_H\ms L$,
\begin{equation*}
0 \;=\;
\sum_{x, y\in V} \mu(x) \, R(x,y) \, e^{H(y) - H(x)}\,
[\, H(y) - H(x) \, ] \;.
\end{equation*}
Adding the two previous identities yields \eqref{45}.
\end{proof}

\begin{remark}
\label{rm1}
It follows from the previous proof that the minimum in the denominator
in equation \eqref{38} can be restricted to a subset of edges $E_0$
which keeps the chain irreducible.
\end{remark}

Next result follows from the two previous lemmata.

\begin{corollary}
\label{l03}
Assume that the Markov chain induced by the generator $\ms L$ is
irreducible and that $\mu(x)>0$ for all $x\in V$. Then, there exists a
function $H\colon V\to \bb R$ such that $\mu$ is stationary for
$\mf M_H \ms L$.
\end{corollary}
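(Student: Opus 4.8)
The plan is to obtain the desired tilting function by simply composing the two preceding lemmata, which between them already contain all the substance. First I would invoke Lemma~\ref{l02}: the hypotheses here are exactly those of that lemma, namely that the chain induced by $\ms L$ is irreducible and that $\mu(x)>0$ for every $x\in V$. Under these assumptions Lemma~\ref{l02} guarantees the existence of a function $H\colon V\to \bb R$ (unique up to an additive constant) at which the supremum defining the rate functional is attained, that is, $I(\mu) = J_H(\mu)$.

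Next I would feed this equality into Lemma~\ref{l10}. That lemma establishes the equivalence that $\mu$ is a stationary state for the Markov chain induced by the tilted generator $\mf M_H \ms L$ if and only if $I(\mu)=J_H(\mu)$. Reading the equivalence in the direction from the identity $I(\mu)=J_H(\mu)$ (just produced) to stationarity yields at once that $\mu$ is stationary for $\mf M_H \ms L$, which is precisely the assertion of the corollary.

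I do not expect any genuine obstacle, since the two hard ingredients have already been isolated: the existence of a maximiser $H$ in Lemma~\ref{l02} (whose proof relied on the uniform bound \eqref{38} on the increments of $H$, itself a consequence of irreducibility through a self-avoiding path argument), and the identification in Lemma~\ref{l10} of the equality $I(\mu)=J_H(\mu)$ with the Euler--Lagrange equation \eqref{11}, i.e.\ the stationarity condition for $\mf M_H \ms L$. The corollary is therefore nothing more than the concatenation of these two facts, and the proof is a single short paragraph applying Lemma~\ref{l02} and then Lemma~\ref{l10}.
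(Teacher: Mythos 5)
Your proposal is correct and is exactly the paper's own argument: the paper states that Corollary \ref{l03} ``follows from the two previous lemmata,'' namely existence of a maximiser $H$ with $I(\mu)=J_H(\mu)$ from Lemma \ref{l02}, and the equivalence of this identity with stationarity for $\mf M_H\,\ms L$ from Lemma \ref{l10}. Nothing is missing; your concatenation of the two lemmata is the intended proof.
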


\begin{corollary}
\label{l33}
Assume that the Markov chain induced by the generator $\ms L$ is
irreducible and that $\mu(x)>0$ for all $x\in V$. Then, $I(\mu)=0$ if
and only if $\mu$ is the stationary state.
\end{corollary}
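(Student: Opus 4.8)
The plan is to reduce both implications to Lemma \ref{l10} applied to the untilted generator, that is, to the choice $H\equiv 0$. The key observation is purely formal: by \eqref{56} one has $\mf M_0\,\ms L=\ms L$, and by \eqref{37} one has $J_0(\mu)=-\sum_{x,y}\mu(x)\,R(x,y)\,[\,1-1\,]=0$, independently of $\mu$. Since Lemma \ref{l01} already gives $I(\mu)\ge J_0(\mu)=0$, the whole statement amounts to deciding exactly when this trivial lower bound is attained, and Lemma \ref{l10} is precisely the tool that converts "the supremum is attained at $H$" into "$\mu$ is stationary for $\mf M_H\,\ms L$".

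First I would treat the implication that a stationary $\mu$ satisfies $I(\mu)=0$. If $\mu$ is the stationary state, then in particular $\int_V(\ms L f)\,d\mu=0$ for all $f\colon V\to\bb R$, i.e.\ $\mu$ is stationary for the chain induced by $\mf M_0\,\ms L=\ms L$; Lemma \ref{l10} then yields $I(\mu)=J_0(\mu)=0$. For the converse, suppose $I(\mu)=0$. Then $I(\mu)=0=J_0(\mu)$, so the characterization \eqref{08} of Lemma \ref{l10} holds with $H\equiv 0$, whence $\mu$ is stationary for $\mf M_0\,\ms L=\ms L$. By irreducibility the stationary probability measure is unique, so $\mu$ is the stationary state.

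As for the roles of the hypotheses: irreducibility is what makes the phrase "the stationary state" meaningful, and it is used only in the last line of the converse to pass from "$\mu$ is a stationary state" to "$\mu$ is the stationary state". Positivity of $\mu$, together with irreducibility, is what Lemma \ref{l02} requires to guarantee that the supremum defining $I(\mu)$ is attained by some $H$; in the present argument, however, when $I(\mu)=0$ the maximizer is \emph{explicitly} $H\equiv 0$, so the existence supplied by Lemma \ref{l02} is not strictly needed and the proof flows directly from Lemma \ref{l10}. I do not anticipate any genuine obstacle here: the only point to notice is that the constant test function $H\equiv 0$ reproduces the original generator and makes $J_H$ vanish, after which both directions are immediate instances of the equivalence between stationarity and attainment of the supremum.
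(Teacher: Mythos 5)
Your proof is correct, and for the nontrivial direction it takes a genuinely different (and leaner) route than the paper's. To show that $I(\mu)=0$ forces stationarity, the paper first invokes Lemma \ref{l02} --- this is where irreducibility and the positivity of $\mu$ actually enter --- to produce a maximizer $H$ of $H'\mapsto J_{H'}(\mu)$ together with the representation \eqref{45}; since each summand there has the form $\mu(x)\,R(x,y)\,[\,a e^a - e^a + 1\,]$ with $a=H(y)-H(x)$, and this bracket is nonnegative and vanishes only at $a=0$, the hypothesis $I(\mu)=0$ forces $H(y)=H(x)$ across every edge with $\mu(x)R(x,y)>0$, hence $H$ is constant by irreducibility, so $\mf M_H\,\ms L=\ms L$ and Lemma \ref{l10} yields stationarity. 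You bypass Lemma \ref{l02} and \eqref{45} entirely by observing that $J_0(\mu)=0$ identically, so $I(\mu)=0$ already exhibits $H\equiv 0$ as a maximizer, and the converse half of Lemma \ref{l10} --- whose first-variation proof uses neither irreducibility nor positivity of $\mu$ --- gives stationarity for $\mf M_0\,\ms L=\ms L$ directly. As you note, your argument in fact proves the stronger statement that $I(\mu)=0$ if and only if $\mu$ is \emph{a} stationary state, for arbitrary $\mu$ and arbitrary (possibly reducible) chains, irreducibility serving only to upgrade ``a'' to ``the''; that stronger statement is precisely the paper's Lemma \ref{l32}, which the paper instead derives by the longer route through Lemma \ref{l04} and the present corollary. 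The reverse implication (stationarity implies $I(\mu)=J_0(\mu)=0$ via Lemma \ref{l10} with $H=0$) is identical in both proofs; what the paper's argument buys in exchange for its length is explicit information about the maximizer, while yours buys brevity and weaker hypotheses.
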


\begin{proof}
Assume that $I(\mu)=0$. Then, since $a\, e^a \,-\, e^a \,+\, 1 \ge 0$,
each term in the sum \eqref{45} vanishes, and $H(y)=H(x)$ if
$\mu(x)\, R(x,y)>0$. As the Markov chain is irreducible, $H$ is
constant. To complete the argument, it remains to recall that, by
Lemma \ref{l10}, $\mu$ is a stationary state for the chain induced by
the generator $\mf M_H \ms L = \ms L$.

Conversely, suppose that $\mu$ is the stationary state. Then, $\mu$ is
a stationary state for the chain induced by the generator
$\mf M_H \ms L$ for $H=0$. By Lemma \ref{l10}, $I(\mu)= J_0(\mu)=0$,
as claimed.
\end{proof}

\subsection*{Reducible Markov chains}

In this subsection, we derive a formula for $I(\mu)$ in the case where
the process $X_t$ is reducible or the support of $\mu$ a proper subset
of $V$.

Denote by $\mf R_{\ms A} \ms L$, $\ms A$ a proper subset of $V$ which
is not a singleton, the generator of the Markov chain $X_t$ reflected
at $\ms A$. This is the $\ms A$-valued Markov chain which jumps from
$x\in \ms A$ to $y\in \ms A$ at rate $R(x,y)$. Its generator reads as
\begin{equation}
\label{54}
{\color{blue} [\, (\mf R_{\ms A} \ms L)\, f\,]\, (x)}
\;=\; \sum_{y\in \ms A} R(x,y) \, [\, f(y) -
f(x)\,]\;, \quad x\in \ms A \;. 
\end{equation}
Clearly, this chain may be reducible even if the original one is
irreducible.

Fix a probability measure $\mu \in \ms P(V)$, and denote by $V_\mu$
its support, $\color{blue} V_\mu = \{x\in V : \mu(x)>0\}$, and by
$\color{blue} X^\mu_t$ the Markov chain reflected at $V_\mu$.  Mind that
we do not assume $V_\mu$ to be a proper subset of $V$. 

The formula for $I(\mu)$ relies on the construction of a directed
graph without directed loops. Denote by by
$\color{blue} \ms Q_1, \dots, \ms Q_\ell$ the equivalence classes of
the chain $X^\mu_t$. These classes form the set of vertices of the
directed graph.  Draw a directed arrow from $\ms Q_a$ to $\ms Q_b$ if
there exists $x\in \ms Q_a$ and $y\in \ms Q_b$ such that
$R(x,y)>0$. Denote the set of directed edges by $\bb A$ and the graph
by $\color{blue} \bb G = (\mb Q, \bb A)$, where $\mb Q$ is the set
$\{\ms Q_1, \dots, \ms Q_\ell\}$ of vertices.

A path in the graph $\bb G$ is a sequence vertices
$(\ms Q_{a_j} : 0\le j\le m)$, such that there is a directed arrow
from $\ms Q_{a_j}$ to $\ms Q_{a_{j+1}}$ for $0\le j<m$.  This directed
graph has no directed loops because the existence of a directed loop
would contradict the definition of the sets $\ms Q_a$ as equivalent
classes. (Mind that undirected loops might exist).

Let $\color{blue} \ms C_1, \dots, \ms C_p$ be the closed irreducible
classes and $\color{blue} \ms T_1, \dots, \ms T_q$ be the transient
ones, so that $p+q=\ell$. Since the sets $\ms C_j$ are closed
irreducible classes, these sets are not the tail of a directed edge in
the graph. On the other hand, as the elements of $\ms T_i$ are
transient for the chain $X^\mu_t$, there is a path
$(\ms T_i= \ms T_{a_0}, \dots, \ms T_{a_m-1}, \ms C_j)$ from $\ms T_i$
to some irreducible class $\ms C_j$.

Fix a transient class $\ms T_i$.  Denote by
$\color{blue} \mb D(\ms T_i)$ the length of the longest path from
$\ms T_i$ to a closed irreducible class. The function $\mb D$ is well
defined because (a) the set of vertices is finite, (b) there is at
least a path, (c) there are no directed loops in the graph.

Fix $a$, $b$ such that there is a directed arrow from $\ms T_a$ to
$\ms T_b$. Then, 
\begin{equation}
\label{f10}
\mb D(\ms T_a) \;\ge\; \mb D(\ms T_b) \;+\; 1\;.
\end{equation}
Indeed, it is enough to consider the longest path from $\ms T_b$ to
the irreducible classes. $\ms T_a$ does not belong to the path because
there are no directed loops. By adding $\ms T_a$ at the beginning of
the path from $\ms T_b$ to the irreducible classes, we obtain a path
from $\ms T_a$ to the irreducible classes of length
$\mb D(\ms T_b) + 1$, proving \eqref{f10}.

Setting $\mb D(\ms C_j)=0$ for all $1\le j\le p$, we may extend
\eqref{f10} to the closed irreducible classes. Fix $a$, $b$ such that
there is a directed arrow from $\ms T_a$ to $\ms C_b$. Then,
\begin{equation}
\label{f10b}
\mb D(\ms T_a) \;\ge\; \mb D(\ms C_b) \;+\; 1
\end{equation}
because $\mb D(\ms T_a) \ge 1$.  Finally, we may lift the function
$\mb D$ to $V_\mu$ by setting $\color{blue} \mb D(x) = \mb D(\ms Q_a)$
for all $x\in \ms Q_a$. 

Recall from \eqref{54} that we represent by $\mf R_{\ms A} \ms L$ the
generator of the process $X_t$ reflected at $\ms A$.  Assume that the
chain induced by the generator $\mf R_{\ms A} \ms L$ is
irreducible. Let
$\color{blue} I_{\mf R_{\ms A} \ms L}\colon\ms P(\ms A) \to \bb R_+$
the functional given by
\begin{equation*}
I_{\mf R_{\ms A} \ms L}  (\mu) \;: =\;
\sup_{H} \, -\, \int_{\ms A} e^{-H} \,
\big[\, (\, \mf R_{\ms A} \ms L \,) \, e^H\,\big]
\, d\mu \;,
\end{equation*}
where the supremum is carried over all functions $H\colon\ms A \to \bb R$.

Denote by $\ms D_a$, $1\le a\le m$ the equivalent classes of the chain
$X^\mu_t$ with at least two elements. Note that $\mu(\ms D_a)>0$ for
all $a$ and that $m\le \ell$. Let $\mu_{\ms A}$, $\ms A \subset V$
such that $\mu(\ms A)>0$, be the measure $\mu$ conditioned to $\ms A$:
\begin{equation}
\label{57}
{\color{blue} \mu_{\ms A}(x)} \;:=\; \frac{\mu(x)}{\mu(\ms A)}\;, \quad
x\,\in \,  \ms A\;.
\end{equation}
Let $K \colon \ms P (V) \to \bb R_+$ the functional given by
\begin{equation}
\label{05}
\begin{aligned}
{\color{blue} K (\mu)} \, & =\,
\sum_{a=1}^{m}  \mu(\ms D_a)\,
I_{\mf R_{\ms D_a} \ms L} (\mu_{\ms D_a} ) 
\; +\;
\sum_{x\in V_\mu} \sum_{y\not\in V_\mu} \mu(x)\, R(x,y) \\
\; &+\;
\sum_{a=1}^\ell \sum_{b\neq a}
\sum_{x\in \ms Q_a} \sum_{y\in \ms Q_b} \mu(x)\, R(x,y)  \;.
\end{aligned}
\end{equation}
In this formula, since closed irreducible equivalent classes are not
the tail of any directed edge, we may restrict the sum over $a$ to
transient equivalent classes. Moreover, if $R(x,y)>0$ for some
$x\in \ms Q_a$, $y\in\ms Q_b$, then $R(z,w)=0$ for all $z\in \ms Q_b$,
$w\in\ms Q_a$. Hence, in the last sum, each pair $(a,b)$ is counted
only once.

In view of Lemma \ref{l02}, we may rewrite \eqref{05} as
\begin{equation}
\label{05b}
K (\mu) \, =\, -\, \sum_{a=1}^{m}
\sum_{x\in \ms D_a} \sum_{y\in \ms D_a\setminus \{x\}} \mu(x) \, R_{H_a}(x,y) 
\; +\;
\sum_{x\in V} \sum_{y\in V\setminus \{x\}}  \mu(x)\, R(x,y) \;,
\end{equation}
where $H_a: \ms D_a \to \bb R$ is the function (unique up to an
additive constant) which turns the measure $\mu$ conditioned to
$\ms D_a$ stationary for the chain induced by
$\mf M_{H_a} \mf R_{\ms D_a}\, \ms L$ (the generator
$\mf R_{\ms D_a}\, \ms L$ tilted by $H_a$).

\begin{lemma}
\label{l04}
For all $\mu \in \ms P(V)$
\begin{equation*}
I (\mu) \, =\, K(\mu)   \;.
\end{equation*}
\end{lemma}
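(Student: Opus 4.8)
The plan is to prove the two inequalities $I(\mu)\le K(\mu)$ and $I(\mu)\ge K(\mu)$ separately, exploiting a decomposition of the double sum defining $J_H(\mu)$ in \eqref{37} according to the equivalence-class structure of $X^\mu_t$. For a fixed $H\colon V\to\bb R$, since $\mu(x)=0$ off $V_\mu$, I would split
\[
J_H(\mu) \;=\; -\sum_{x\in V_\mu}\sum_{y\in V}\mu(x)\,R(x,y)\,\big[\,e^{H(y)-H(x)}-1\,\big]
\]
into three groups of terms: (i) $y\notin V_\mu$; (ii) $y\in V_\mu$ lying in a different equivalence class than $x$; and (iii) $x,y$ in the same class, which is then necessarily a non-singleton class $\ms D_a$. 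Group (iii) depends on $H$ only through its increments on $\ms D_a$ and equals $\sum_{a=1}^{m} \mu(\ms D_a)\,J_{\mf R_{\ms D_a}\ms L,\,H}(\mu_{\ms D_a})$.

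For the upper bound I would estimate each group individually. In groups (i) and (ii), each summand satisfies $-\mu(x)R(x,y)[e^{H(y)-H(x)}-1]=\mu(x)R(x,y)\,[1-e^{H(y)-H(x)}]\le \mu(x)R(x,y)$, since $e^{(\cdot)}>0$; summing yields exactly the second and third terms of \eqref{05} (recall that the classes form a directed graph without directed loops, so cross-class edges occur for at most one orientation of each pair and are all accounted for there). In group (iii), restricting the supremum in \eqref{19} defining $I_{\mf R_{\ms D_a}\ms L}$ to the test function $H|_{\ms D_a}$ gives $J_{\mf R_{\ms D_a}\ms L,\,H}(\mu_{\ms D_a})\le I_{\mf R_{\ms D_a}\ms L}(\mu_{\ms D_a})$, producing the first term of \eqref{05}. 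Adding the three bounds and taking the supremum over $H$ yields $I(\mu)\le K(\mu)$.

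For the lower bound I would build an explicit maximizing sequence. By Lemma \ref{l02}, for each non-singleton class $\ms D_a$ (on which $\mu_{\ms D_a}$ is everywhere positive and the reflected chain $\mf R_{\ms D_a}\ms L$ is irreducible) there is a function $\bs h_a\colon\ms D_a\to\bb R$ attaining the supremum $I_{\mf R_{\ms D_a}\ms L}(\mu_{\ms D_a})$. Let $h\colon V_\mu\to\bb R$ equal $\bs h_a$ on each $\ms D_a$ and $0$ on the singleton classes, and define, for $N\ge 1$,
\[
H_N(x)\;=\;N\,\mb D(x)+h(x)\;\;(x\in V_\mu),\qquad H_N(y)\;=\;-N\;\;(y\notin V_\mu),
\]
where $\mb D$ is the level function of the acyclic class graph lifted to $V_\mu$ as before \eqref{f10}. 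For an edge internal to a class $\ms D_a$ the increment $H_N(y)-H_N(x)=\bs h_a(y)-\bs h_a(x)$ is independent of $N$, so group (iii) stays equal to $\sum_{a}\mu(\ms D_a)\,I_{\mf R_{\ms D_a}\ms L}(\mu_{\ms D_a})$. Every cross-class edge necessarily emanates from a transient class, and for such an edge $\ms Q_a\to\ms Q_b$ one has $\mb D(\ms Q_b)<\mb D(\ms Q_a)$ by \eqref{f10}--\eqref{f10b}; for an exiting edge the level drops to $-N$. In both cases $H_N(y)-H_N(x)\to-\infty$, so the corresponding summands converge to $\mu(x)R(x,y)$. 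Hence $J_{H_N}(\mu)\to K(\mu)$, giving $I(\mu)\ge K(\mu)$.

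The delicate step is this lower bound: one must drive all exiting and cross-class terms to their maximal value $\mu(x)R(x,y)$ simultaneously while keeping the within-class terms frozen at their optimizers and, crucially, preventing any increment from tending to $+\infty$. This is precisely where the absence of directed loops in the class graph is essential, as it guarantees that $\mb D$ strictly decreases along every directed edge. Combining the two inequalities yields $I(\mu)=K(\mu)$.
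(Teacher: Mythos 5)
Your proof is correct and follows essentially the same route as the paper: the same three-way decomposition of $J_H(\mu)$ into exiting, cross-class, and within-class terms, the same pointwise bounds plus restriction of the supremum for the inequality $I(\mu)\le K(\mu)$, and the same maximizing sequence $H_N = N\,\mb D + h$ on $V_\mu$, $-N$ off $V_\mu$ (the paper's \eqref{29}) for the reverse inequality, with the acyclicity of the class graph via \eqref{f10}--\eqref{f10b} playing exactly the role you identify. No gaps.
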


\begin{proof}
We first prove that $I(\mu) \le K(\mu)$.  Fix $H\colon V \to \bb
R$. By definition of $V_\mu$,
\begin{equation*}
\int_V e^{-H} \ms L e^H \; d\mu \;=\;
\sum_{x\in V_\mu, y\in V} \mu(x)\, R(x,y)\, \big[\,
e^{H(y)-H(x)}-1\,\big]\;. 
\end{equation*}
The right-hand side can be rewritten as
\begin{equation}
\label{02}
\sum_{x\in V_\mu, y\not\in V_\mu}  c_H(x,y)
\;+\; \sum_{a=1}^{m} \sum_{x\in \ms D_a , y\in \ms D_a} 
c_H(x,y)   \;+\;
\sum_{a=1}^\ell \sum_{b\neq a}
\sum_{x\in \ms Q_a} \sum_{ y\in \ms Q_b} c_H(x,y) \;, 
\end{equation}
where $c_H(x,y) = \mu(x)\, R(x,y)\, [\,\exp \{ H(y)-H(x)\}-1\,]$.  

As $c_H(x,y) \ge - \, \mu(x)\, R(x,y)$, the sum of the first and third
terms of the previous displayed equation are bounded below by
\begin{equation*}
-\, \sum_{x\in V_\mu, y\not\in V_\mu} \mu(x)\, R(x,y)
\;-\;
\sum_{a=1}^\ell \sum_{b\neq a}
\sum_{x\in \ms Q_a} \sum_{ y\in \ms Q_b}
\mu(x)\, R(x,y)  \;.
\end{equation*}
The second term of that formula is bounded below by
\begin{equation*}
\sum_{a=1}^{m} \inf_{G} \sum_{x\in \ms D_a, y\in \ms D_a}
c_G(x,y)   \;=\; - \, \sum_{a=1}^{m}
\mu (\ms D_a)\, I_{\mf R_{\ms D_a} \ms L} (\mu_{\ms D_a}) \;.
\end{equation*}

Up to this point we proved that
\begin{equation*}
\int_V e^{-H} \ms L e^H \; d\mu \;\ge \; -\, K(\mu)
\end{equation*}
for all $H\colon V \to \bb R$. Multiplying by $-1$ and optimising over $H$
yields that $I(\mu) \le K(\mu)$.

We turn to the converse inequality.  For each set $\ms D_a$ the Markov
chain induced by the generator $\mf R_{\ms D_a} \ms L$ is irreducible and
$\mu(x)>0$ for all $x\in \ms D_a$. Hence, by Lemma \ref{l02}, there
exists a function $G_a\colon \ms D_a \to \bb R$ which solves the
variational problem
\begin{equation}
\label{04}
I_{\mf R_{\ms D_a} \ms L} (\mu_{\ms D_a}) \;=\; 
\sup_H \, -\, \int_{\ms D_a}  e^{-H}
\, (\mf R_{\ms D_a} \ms L)\,  e^H \; d\mu_{\ms D_a} \;= \;
-\, \int_{\ms D_a}  e^{-G_a} \, (\mf R_{\ms D_a} \ms L)\,  e^{G_a}
\; d\mu_{\ms D_a} \;,
\end{equation}
where the supremum is carried over all functions
$H\colon \ms D_a \to \bb R$. 

Recall the definition of the function $\mb D \colon V_\mu \to \bb R$
introduced above \eqref{f10}.  Define the sequence of functions
$H_n \colon V \to \bb R$ by
\begin{equation}
\label{29}
H_n(x) \;=\;
\begin{cases}
\displaystyle
-\, n &  x\not\in V_\mu\;,
\\
\displaystyle
G_a(x) \;+\; n\, \mb D(x)
& x\in \ms D_a\;,
\\
\displaystyle
n\, \mb D(x)
& x\in V_\mu \setminus \cup_{1\le a\le m} \ms D_a \;.
\end{cases}
\end{equation}
By definition of the functional $I$ and since $V_\mu$ stands for the
support of $\mu$,
\begin{equation*}
I(\mu) \;\ge\; -\, \liminf_{n\to\infty}
\int_{V_\mu}  e^{-H_n} \ms L e^{H_n} \; d\mu\;.
\end{equation*}
For a fixed $n$ the previous integral is equal to the sum in
\eqref{02} with $H_n$ replacing $H$. By definition of $H_n$, as
$n\to\infty$, the first term in \eqref{02} converges to
\begin{equation*}
-\, \sum_{x\in V_\mu, y\not\in V_\mu} \mu(x)\, R(x,y)  \;.
\end{equation*}
Since $\mb D(x) = \mb D(y)$ for elements $x$, $y$ in the same
equivalent class $\ms D_a$, by \eqref{04}, for every $n\ge 1$, the
second term in \eqref{02} is equal to
\begin{equation*}
\begin{aligned}
\sum_{a=1}^{m} \sum_{x\in \ms D_a, y\in \ms D_a} c_{G_a}(x,y)
\; & =\; \sum_{a=1}^{m} \mu (\ms D_a) \,
\int_{\ms D_a}  e^{-G_a} \, (\mf R_{\ms D_a} \ms L)\, 
e^{G_a} \; d\mu_{\ms D_a} \\
\;& =\; -\, \sum_{a=1}^{m}
\mu (\ms D_a) \, I_{\mf R_{\ms D_a} \ms L} (\mu_{\ms D_a})\;.
\end{aligned}
\end{equation*}
Finally, to estimate the third term in \eqref{02}, fix $x\in \ms Q_a$,
$y\in\ms Q_b$ such that $R(x,y)>0$.  By \eqref{f10}, \eqref{f10b},
$\mb D(x) \ge \mb D(y)+1$. Thus, $H_n(y) - H_n(x) \le -n + C_0$ for
some finite constant $C_0$ independent of $n$, and, as $n\to\infty$,
the second term in \eqref{02} converges to
\begin{equation*}
-\, \sum_{a=1}^\ell \sum_{b\neq a}
\sum_{x\in \ms Q_a} \sum_{ y\in \ms Q_b} \mu(x)\, R(x,y)
\end{equation*}
Collecting all previous estimates yields that
\begin{equation*}
I (\mu) \;\ge\;  -\, \liminf_{n\to\infty}
\int_{V}  e^{-H_n} \ms L e^{H_n} \; d\mu
\;=\;  K(\mu)\;,
\end{equation*}
which completes the proof of the lemma.
\end{proof}

\begin{lemma}
\label{l32}
A measure $\mu\in \ms P(V)$ is a stationary state of the Markov chain
$X_t$ if and only if $I(\mu)=0$. 
\end{lemma}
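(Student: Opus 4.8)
The plan is to obtain the whole statement as the $H=0$ instance of Lemma \ref{l10}. The point is that the null tilt reproduces the original generator, $\mf M_0\ms L = \ms L$, and that $J_0(\mu) = -\int_V \ms L 1\, d\mu = 0$ for every $\mu$, because $\ms L 1 \equiv 0$. Lemma \ref{l10} asserts that $\mu$ is stationary for $\mf M_0\ms L = \ms L$ if and only if $I(\mu) = J_0(\mu)$; since $J_0(\mu) = 0$, this is precisely the equivalence ``$\mu$ stationary $\iff I(\mu) = 0$''. There is thus essentially no obstacle on the main route: the only thing to notice is that Lemma \ref{l32} is a reformulation of Lemma \ref{l10} at $H=0$.

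Should one prefer a proof that does not pass through the Euler--Lagrange characterisation, I would give the reverse implication directly from the decomposition $I = K$ of Lemma \ref{l04} and the formula \eqref{05}. Each of the three summands in \eqref{05} is non-negative --- the first by Lemma \ref{l01}, the other two visibly --- so $I(\mu) = K(\mu) = 0$ forces all three to vanish. From the second I read off $R(x,y) = 0$ for all $x\in V_\mu$, $y\notin V_\mu$, and from the third $R(x,y) = 0$ whenever $x\in\ms Q_a$, $y\in\ms Q_b$ with $a\neq b$; the first gives $I_{\mf R_{\ms D_a}\ms L}(\mu_{\ms D_a}) = 0$ for each non-singleton class $\ms D_a$ (as $\mu(\ms D_a)>0$), whence, by Corollary \ref{l33} applied to the irreducible reflected chain on $\ms D_a$ with the everywhere-positive measure $\mu_{\ms D_a}$, the conditioned measure $\mu_{\ms D_a}$ is stationary for $\mf R_{\ms D_a}\ms L$.

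To reassemble stationarity I would compute, for arbitrary $f\colon V\to\bb R$,
\begin{equation*}
\int_V \ms L f\, d\mu \;=\; \sum_{x\in V_\mu}\mu(x)\sum_{y\in V}R(x,y)\,[\,f(y)-f(x)\,]\;.
\end{equation*}
The first two facts annihilate every $y$ lying outside the equivalence class of $x$, reducing this to $\sum_a\sum_{x\in\ms Q_a}\mu(x)\sum_{y\in\ms Q_a}R(x,y)\,[\,f(y)-f(x)\,]$, which vanishes block by block: on a non-singleton class by the stationarity of $\mu_{\ms D_a}$ just established, and on a singleton class $\ms Q_a = \{x\}$ trivially, since the inner sum then runs over $y=x$ only. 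The one piece of bookkeeping to watch --- and the closest thing to an obstacle on this second route --- is exactly that the first sum in \eqref{05} ranges over non-singleton classes only, so singletons must be handled by hand; under the standing hypothesis $\lambda(x)>0$ a closed singleton cannot in fact arise, but the argument does not rely on this.
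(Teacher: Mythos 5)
Your proposal is correct, and your main route is genuinely different from the paper's. The paper proves both implications through the decomposition $I=K$ of Lemma \ref{l04}: if $I(\mu)=0$, the three non-negative terms in \eqref{05} must vanish, so the support of $\mu$ is a union of closed irreducible classes and, by Corollary \ref{l33} applied on each class $\ms D_a$, $\mu$ is a convex combination of the stationary states carried by these classes; conversely, if $\mu$ is stationary, the structure of its support kills the second and third terms of \eqref{05}, and Corollary \ref{l33} kills the first. Your reduction to the $H=0$ instance of Lemma \ref{l10} bypasses all of this, and it is legitimate precisely because Lemma \ref{l10}, unlike Lemma \ref{l02} and Corollary \ref{l33}, assumes neither irreducibility nor that $\mu$ has full support: one direction of its proof uses stationarity only to insert the linear term $F(y)-F(x)$, the other only the first-order condition at the interior maximum $a=0$ of the concave map $a\mapsto J_{aG}(\mu)$. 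Since $\mf M_0\,\ms L=\ms L$ and $J_0(\mu)=-\int_V \ms L\,1\;d\mu=0$ for every $\mu$, the equivalence drops out at once. What each approach buys: yours is a two-line argument exposing Lemma \ref{l32} as a pure Euler--Lagrange statement; the paper's heavier route yields, as a byproduct, the structural description of the zero level set of $I$ (support equal to a union of closed irreducible classes, $\mu$ a convex combination of the corresponding stationary states), which is the picture invoked in Section \ref{sec1} and in the proof of Lemma \ref{l39}. Your secondary argument via \eqref{05} is also sound and is, in expanded form, exactly the paper's proof of the implication $I(\mu)=0\Rightarrow\mu$ stationary, including the correct bookkeeping that singleton classes are harmless (and, under the standing hypothesis $\lambda(x)>0$, cannot in fact occur when $I(\mu)=0$).
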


\begin{proof}
Assume that $I(\mu)=0$. Then, by Lemma \ref{l04}, all terms on the
right-hand side of \eqref{05} vanish. As the second and third terms
vanish the support of $\mu$ consists of the union of closed
irreducible sets of the Markov chain. Since the first term vanishes,
by Corollary \ref{l33}, $\mu$ restricted to these irreducible classes
is a stationary state. Hence, $\mu$ is a convex combination of the
stationary states, and, hence, a stationary state.

Suppose that $\mu$ is a stationary state. Then, its support is the
union of closed irreducible classes. Therefore, the second and third
terms on the right-hand side of \eqref{05} vanish. Fix a closed
irreducible class of the chain contained in the support of $\mu$. The
restriction of $\mu$ to this set is strictly positive. Hence, by
Corollary \ref{l33}, the first term on the right-hand side of
\eqref{05} also vanishes. This completes the proof of the lemma.
\end{proof}

\subsection*{Degenerate generators}

In this subsection, we consider generators whose holding rates might
vanish. Let $\color{blue} V_0 = \{x: \lambda(x)>0\}$ and keep in mind
that $V_0$ may be a proper subset of $V$.

Denote by $\ms L_0$ the generator $\ms L$ restricted to $V_0$:
\begin{equation*}
{ \color{blue} (\ms L_0 f)(x)}
\;=\; \sum_{y\in V_0} R(x,y)\, \{\, f(y)\,-\, f(x)\,\}\;, \quad
f\colon V_0\to \bb R\;.
\end{equation*}
Fix a measure $\mu\in \ms P(V)$. Clearly, if $\mu(V_0)=0$, then for
all $H\colon V\to \bb R$, $J_H(\mu) =0$ and $I(\mu)=0$. The next lemma
covers the case where $\mu(V_0)>0$. To stress the dependence of the
functional $J_H$ introduced at \eqref{37} on the generator $\ms L$,
denote it below by $\color{blue} J_{\ms L, H}$. For a function
$G\colon V_0\to \bb R$, let $J_{\ms L_0, G}
\colon \ms P(V_0) \to \bb R$ be the functional given by
\begin{equation*}
{\color{blue} J_{\ms L_0, G} (\nu)} \;:=\;
-\, \int_{V_0} e^{-G} \ms L_0 e^G\, d\nu\;.
\end{equation*}
Let $\color{blue} I_{\ms L_0} \colon \ms P(V_0) \to \bb R$ be the
functional defined by \eqref{19} with $J_H$ replaced by
$J_{\ms L_0, G}$ and where the supremum is carried over all functions
$G\colon V_0\to \bb R$.

\begin{lemma}
\label{l14}
For all measures $\mu\in \ms P(V)$ such that $\mu(V_0)>0$,
\begin{equation*}
J_{\ms L, H} (\mu) \;=\; \mu(V_0) \,
\Big\{\, J_{\ms L_0, H_{V_0}} (\mu_{V_0}) \,-\,
\sum_{x\in V_0} \sum_{y\in V\setminus V_0}
\mu_{V_0}(x) \, R(x,y)\, \big[e^{H(y)-H(x)} -1 \,\big]\,\Big\}
\;, 
\end{equation*}
where $H_{V_0}\colon V_0\to \bb R$ stands for the restriction of $H$
to $V_0$: $H_{V_0} (x) = H(x)$, $x\in V_0$. In particular,
\begin{equation*}
I_{\ms L}(\mu) \;=\;  \mu(V_0) \, I_{\ms L_0}(\mu_{V_0})
\;+\; \sum_{x\in V_0} \sum_{y\in V\setminus V_0}
\mu(x) \, R(x,y)\;.
\end{equation*}
\end{lemma}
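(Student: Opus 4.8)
The plan is to exploit the structural fact that every state outside $V_0$ is absorbing. Indeed, if $x\notin V_0$ then $\lambda(x)=\sum_{y}R(x,y)=0$, and since the rates are non-negative this forces $R(x,y)=0$ for every $y\in V$. Consequently, in the expression
\[
J_{\ms L,H}(\mu)\;=\;-\sum_{x,y\in V}\mu(x)\,R(x,y)\,\big[\,e^{H(y)-H(x)}-1\,\big]
\]
obtained from \eqref{37}, only the terms with $x\in V_0$ survive, regardless of the measure $\mu$.

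First I would restrict the outer sum to $x\in V_0$ and use $\mu(x)=\mu(V_0)\,\mu_{V_0}(x)$, then split the inner sum over $y$ according to whether $y\in V_0$ or $y\in V\setminus V_0$. The contribution of the pairs $(x,y)\in V_0\times V_0$ is exactly $\mu(V_0)\,J_{\ms L_0,H_{V_0}}(\mu_{V_0})$, because $\ms L_0$ carries precisely the rates $R(x,y)$ with $x,y\in V_0$ and $H_{V_0}$ is the restriction of $H$; the remaining pairs $(x,y)\in V_0\times(V\setminus V_0)$ assemble into the boundary term. This yields the first displayed identity of the lemma directly, with no optimisation involved.

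For the formula for $I_{\ms L}(\mu)$ I would take the supremum over $H$ by decoupling the values of $H$ on $V_0$ from those on $V\setminus V_0$. The crucial observation is that the values $H(y)$ for $y\in V\setminus V_0$ enter only through the boundary term, via factors $e^{H(y)-H(x)}$ with $x\in V_0$. Since the map $t\mapsto-(e^{t}-1)$ is bounded above by $1$, with supremum $1$ approached as $t\to-\infty$, driving each $H(y)\to-\infty$ for $y\in V\setminus V_0$ (while holding $H_{V_0}$ fixed) sends every such factor to $0$ and hence maximises the boundary contribution to
\[
\mu(V_0)\sum_{x\in V_0}\sum_{y\in V\setminus V_0}\mu_{V_0}(x)\,R(x,y)
\;=\;\sum_{x\in V_0}\sum_{y\in V\setminus V_0}\mu(x)\,R(x,y)\;.
\]
As this quantity does not depend on $H_{V_0}$, the inner supremum over $H|_{V\setminus V_0}$ leaves the term $\mu(V_0)\,J_{\ms L_0,H_{V_0}}(\mu_{V_0})$ untouched; taking the outer supremum over $H_{V_0}$ then produces $\mu(V_0)\,I_{\ms L_0}(\mu_{V_0})$ by the definition \eqref{19} of $I_{\ms L_0}$, and adding the two pieces gives the second identity.

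The only point requiring care is the decoupling of the supremum: one must verify that $\sup_{H}=\sup_{H_{V_0}}\sup_{H|_{V\setminus V_0}}$ factors cleanly, i.e. that the inner supremum, computed for each fixed $H_{V_0}$, yields a value independent of $H_{V_0}$. This is immediate here, since each $H(y)$ with $y\in V\setminus V_0$ may be sent to $-\infty$ independently of all the other values, and the supremum $1$ of $t\mapsto-(e^{t}-1)$ is attained in this limit uniformly in the finite values $H(x)$, $x\in V_0$.
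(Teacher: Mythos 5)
Your proof is correct and follows essentially the same route as the paper: the first identity is a direct computation using that $R(x,\cdot)\equiv 0$ off $V_0$, and the supremum formula rests on the same two ingredients the paper uses, namely the bound $1-e^{t}\le 1$ on the boundary terms and sending $H\to-\infty$ on $V\setminus V_0$ to saturate it. Your packaging of this as a decoupled supremum $\sup_{H}=\sup_{H_{V_0}}\sup_{H|_{V\setminus V_0}}$ is merely a cleaner rewriting of the paper's two-inequality argument (upper bound as in Lemma \ref{l04}, lower bound via an optimizing sequence $H_n$ with $H_n(y)-H_n(x)\to-\infty$ for $y\in V\setminus V_0$, $x\in V_0$).
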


\begin{proof}
The first assertion of the lemma is a simple identity.  The proof of
the second one is similar to the one of Lemma \ref{l04}.  We argue as
in this lemma to show that $I_{\ms L}(\mu)$ is bounded by the
right-hand side of the identity. The converse inequality is obtained
by observing that to optimize $J_{\ms L, H} (\mu)$ it is convenient to
set $H(y)=-\infty$ for $y\in V\setminus V_0$. More precisely, one
proceeds just as in the proof of Lemma \ref{l04} to obtain an optimal
sequence $H_n$ defined in $V_0$ and then extend it to $V\setminus V_0$
in such a way that $H_n (y) - H_n(x) \to -\infty$ for all
$y\in V\setminus V_0$, $x\in V_0$.
\end{proof}

The previous lemma allows us to restrict our attention to non-singular
generators. This is the content of the next result.

\begin{corollary}
\label{l38}
For all measures $\mu\in \ms P(V)$ such that $\mu(V_0)>0$,
\begin{equation*}
I_{\ms L} (\mu) \;=\; 
-\, \sum_{a=1}^{m}
\sum_{x\in \ms D_a} \sum_{y\in \ms D_a\setminus \{x\}} \mu(x) \, R_{H_a}(x,y) 
\; +\;
\sum_{x\in V} \sum_{y\in V\setminus \{x\}}  \mu(x)\, R(x,y) \;,
\end{equation*}
where $\ms D_a$, $1\le a\le m$, represent the equivalent classes of
the reflected chain $X^\mu_t$ with at least two elements, and
$H_a: \ms D_a \to \bb R$ the function (unique up to an
additive constant) which turns the measure $\mu$ conditioned to
$\ms D_a$ stationary for the chain induced by
$\mf M_{H_a} \mf R_{\ms D_a}\, \ms L$ (the generator
$\mf R_{\ms D_a}\, \ms L$ tilted by $H_a$).
\end{corollary}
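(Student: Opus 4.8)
The plan is to combine Lemma \ref{l14}, which reduces the computation to the non-degenerate generator $\ms L_0$, with the reformulation \eqref{05b} of Lemma \ref{l04}, whose right-hand side already has exactly the shape claimed here. When $V_0 = V$ there is nothing to prove, since then the asserted identity is precisely \eqref{05b} applied to $\ms L$. I therefore focus on the case where $V_0$ is a proper subset of $V$ and $\mu(V_0)>0$.

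First I would invoke Lemma \ref{l14} to write
\begin{equation*}
I_{\ms L}(\mu) \;=\; \mu(V_0)\, I_{\ms L_0}(\mu_{V_0})
\;+\; \sum_{x\in V_0}\sum_{y\in V\setminus V_0} \mu(x)\, R(x,y)\;.
\end{equation*}
Since $\ms L_0$ has all holding rates positive on $V_0$, Lemma \ref{l04} together with \eqref{05b} applies to $I_{\ms L_0}(\mu_{V_0})$, expressing it as a double sum $-\sum_a\sum_{x\in\ms D_a}\sum_{y\in\ms D_a\setminus\{x\}}\mu_{V_0}(x)\,R_{H_a}(x,y)$ over the multi-element equivalent classes of the chain reflected from $\ms L_0$ at the support of $\mu_{V_0}$, plus the total-jump-rate term $\sum_{x\in V_0}\sum_{y\in V_0\setminus\{x\}}\mu_{V_0}(x)\,R(x,y)$. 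Multiplying by $\mu(V_0)$ and using $\mu(V_0)\,\mu_{V_0}(x)=\mu(x)$ converts every $\mu_{V_0}$-weight into a $\mu$-weight.

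The only step requiring an argument rather than algebra is the identification of the equivalence classes. Any state $x$ with $\lambda(x)=0$ is absorbing, hence forms a singleton class of the reflected chain $X^\mu_t$; consequently every equivalent class with at least two elements is contained in $V_0$, indeed in $V_\mu\cap V_0$. Since reflection at such a class $\ms D_a$ depends only on the rates $R(x,y)$ with $x,y\in\ms D_a$, which are common to $\ms L$ and $\ms L_0$, the multi-element classes of $X^\mu_t$ coincide with those of the chain reflected from $\ms L_0$ at $V_\mu\cap V_0$; moreover the conditioned measures agree, $\mu_{\ms D_a}=(\mu_{V_0})_{\ms D_a}$, and so do the tilting functions $H_a$ (which are, in either case, determined by $\mf R_{\ms D_a}\ms L=\mf R_{\ms D_a}\ms L_0$ on $\ms D_a$). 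Hence the two double sums match term by term. I expect this bookkeeping, rather than any deep estimate, to be the delicate part.

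It then remains to assemble the boundary terms. The total-rate term produced for $\ms L_0$ is $\sum_{x\in V_0}\sum_{y\in V_0\setminus\{x\}}\mu(x)\,R(x,y)$, and adding the term $\sum_{x\in V_0}\sum_{y\in V\setminus V_0}\mu(x)\,R(x,y)$ coming from Lemma \ref{l14} reconstitutes $\sum_{x\in V_0}\sum_{y\in V\setminus\{x\}}\mu(x)\,R(x,y)$. Finally, since $R(x,y)=0$ whenever $x\notin V_0$, the outer sum over $x\in V_0$ may be extended to all of $V$, yielding $\sum_{x\in V}\sum_{y\in V\setminus\{x\}}\mu(x)\,R(x,y)$. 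Together with the matched double sum over the classes $\ms D_a$, this is exactly the asserted expression, completing the proof.
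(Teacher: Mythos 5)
Your proposal is correct and follows essentially the same route as the paper's own proof: reduce via Lemma \ref{l14} to $I_{\ms L_0}(\mu_{V_0})$, apply Lemma \ref{l04} in the form \eqref{05b}, identify the multi-element equivalence classes of the two reflected chains, and extend the range of the jump-rate sums using $R(x,y)=0$ for $x\notin V_0$. The only difference is cosmetic: you justify the class identification (zero-holding-rate states are absorbing, hence singleton classes, so multi-element classes lie in $V_0$ and are determined by the common rates), a point the paper dismisses with ``clearly.''
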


\begin{proof}
By Lemma \ref{l14}, $I_{\ms L}(\mu)$ is the sum of two terms. Consider
$\mu(V_0) \, I_{\ms L_0}(\mu_{V_0})$. By Lemma \ref{l04},
$I_{\ms L_0}(\mu_{V_0}) = K_{\ms L_0}(\mu_{V_0})$, where $K_{\ms L_0}$
is given by equation \eqref{05b} with the set $V$ replaced by $V_0$ in
the second sum. Clearly, the equivalent classes of the reflected chain
$X^\mu_t$ with at least two elements for the generator $\ms L_0$
coincide with the ones for the generator $\ms L$. On the other hand
there is nor harm to replace in the second sum of \eqref{05b} the
condition $x\in V_0$ by $x\in V$ as $R(x,y)=0$ for all
$x\in V\setminus V_0$.  Hence,
\begin{equation*}
\mu(V_0) \, I_{\ms L_0}(\mu_{V_0}) \;=\;
-\, \sum_{a=1}^{m}
\sum_{x\in \ms D_a} \sum_{y\in \ms D_a\setminus \{x\}} \mu(x) \, R_{H_a}(x,y) 
\; +\;
\sum_{x\in V} \sum_{y\in V_0\setminus \{x\}}  \mu(x)\, R(x,y)\;.
\end{equation*}
To complete the proof of the corollary, it remains to recall the
formula for $I_{\ms L}(\mu)$ presented in Lemma \ref{l14}.
\end{proof}

\section{Convergence of level 2 rate functionals}
\label{sec-a2}

In this section, we present some general results on the convergence of
level 2 large deviations rate functionals.  The first result asserts
that the rate functionals converge provided the jump rates converge.

Denote by $\color{blue} \ms L_n$, $n\ge 1$, the generator of a
$V$-valued continuous-time Markov chain whose jump rates are
represented by $\color{blue} R_n(\,\cdot\,,\,\cdot\,)$.  Let
$\color{blue} I_n\colon \ms P(V) \to \bb R_+$ be the occupation time
large deviations rate functional associated to the generator
$\ms L_n$. This is the functional defined by formula \eqref{19} with
the rates $R_n$ replacing $R$. We first consider the case where
$\ms L$ is irreducible.

\begin{lemma}
\label{l27}
Suppose that the Markov chains induced by the generators $\ms L_n$,
$n\ge 1$, and $\ms L$ are irreducible, and that
$R_n(x,y) \to R(x,y) \in \bb R_+$ for all $y\not = x\in V$. 
Then, $I_n(\mu) \to I(\mu)$ for all $\mu\in \ms P(V)$ such that
$\mu(x)>0$ for all $x\in V$. Here, $I$ represents the rate functional
associated to the jump rates $R$.
\end{lemma}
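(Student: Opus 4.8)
The plan is to establish the two bounds $\liminf_n I_n(\mu)\ge I(\mu)$ and $\limsup_n I_n(\mu)\le I(\mu)$ separately. The lower bound is the easy half. Since $\ms L$ is irreducible and $\mu(x)>0$ for all $x\in V$, Lemma \ref{l02} provides a function $H\colon V\to\bb R$ with $I(\mu)=J_{\ms L,H}(\mu)$. By the definition \eqref{19} of $I_n$ as a supremum, $I_n(\mu)\ge J_{\ms L_n,H}(\mu)$ for every $n$. As $J_{\ms L_n,H}(\mu)=-\sum_{x,y}\mu(x)\,R_n(x,y)\,[e^{H(y)-H(x)}-1]$ is a finite sum depending continuously on the rates, the hypothesis $R_n\to R$ gives $J_{\ms L_n,H}(\mu)\to J_{\ms L,H}(\mu)=I(\mu)$, whence $\liminf_n I_n(\mu)\ge I(\mu)$.

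For the upper bound, the idea is to extract convergent optimizers. For each $n$ the chain induced by $\ms L_n$ is irreducible and $\mu(x)>0$, so Lemma \ref{l02} yields a maximizer $H_n\colon V\to\bb R$, which I normalize by $H_n(x_0)=0$ for a fixed $x_0\in V$, such that $I_n(\mu)=J_{\ms L_n,H_n}(\mu)$. The crucial step is to bound the oscillation of $H_n$ uniformly in $n$ through the a priori estimate \eqref{38} applied to $\ms L_n$, namely
\[
\max_{x,y\in V} \big|\,H_n(y)-H_n(x)\,\big|
\;\le\; |V|\,\ln\frac{1+\sum_{x,y\in V}\mu(x)\,R_n(x,y)}
{\min_{(z,w)}\mu(z)\,R_n(z,w)}\,,
\]
where the minimum runs over edges with $R_n(z,w)>0$. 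The numerator converges to $1+\sum_{x,y}\mu(x)\,R(x,y)$ and is therefore bounded.

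The main obstacle is the denominator: a priori the minimum is taken over all edges with $R_n(z,w)>0$, and an edge along which $R_n(z,w)\to 0$ would make the bound diverge. This is exactly what Remark \ref{rm1} resolves. Since $\ms L$ is irreducible, the edge set $E_0:=\{(z,w):R(z,w)>0\}$ keeps the chain irreducible, so by Remark \ref{rm1} the minimum in \eqref{38} may be restricted to $E_0$; and for each $(z,w)\in E_0$ one has $R_n(z,w)\to R(z,w)>0$, so $\min_{(z,w)\in E_0}\mu(z)\,R_n(z,w)$ stays bounded away from $0$ for large $n$. This delivers a bound on $\max_{x,y}|H_n(y)-H_n(x)|$ uniform in $n$, and as $H_n(x_0)=0$ the sequence $(H_n)$ is uniformly bounded. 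To conclude, I would take a subsequence realizing $\limsup_n I_n(\mu)$ and, using the uniform bound, pass to a further subsequence along which $H_{n'}\to H^\ast$ pointwise; joint continuity of $J_{\ms L_{n'},H_{n'}}(\mu)$ in the rates and in $H$ then gives $I_{n'}(\mu)=J_{\ms L_{n'},H_{n'}}(\mu)\to J_{\ms L,H^\ast}(\mu)\le I(\mu)$. Hence $\limsup_n I_n(\mu)\le I(\mu)$, which combined with the lower bound completes the proof.
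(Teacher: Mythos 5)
Your proposal is correct and follows essentially the same route as the paper: the lower bound by testing $I_n$ against a fixed function and passing to the limit in the rates, and the upper bound by extracting the optimizers $H_n$ from Lemma \ref{l02}, controlling their oscillation uniformly via the a priori bound \eqref{38} together with Remark \ref{rm1} (restricting the minimum to the edges of the limiting irreducible chain), and then passing to a convergent subsequence. The only cosmetic difference is in the lower bound, where the paper takes the $\liminf$ for each fixed test function before optimizing rather than invoking the maximizer for $I(\mu)$; this changes nothing of substance.
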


\begin{proof}
Fix $H\colon V\to \bb R$ and denote by
$\color{blue} J^{(n)}_H: \ms P(V) \to \bb R$ the functional given by
\eqref{37} with $R$ replaced by $R_n$. Then, as $R_n(x,y) \to R(x,y)$,
for all $H\colon V\to \bb R$
\begin{equation*}
I_n(\mu) \;\ge\; J^{(n)}_H (\mu) \;\to\; J_H (\mu)\;.
\end{equation*}
Maximizing over $H$ yields that
$\liminf_{n\to\infty} I_n(\mu) \,\ge\, I(\mu)$. Note that we did not
use the irreducibility of $\ms L$ in this part of the proof.

Conversely, since the Markov chain induced by the generator $\ms L_n$
is irreducible and the support of $\mu$ is the set $V$, by Lemmata
\ref{l02} and \ref{l10}, $I_n(\mu) = J^{(n)}_{H_n} (\mu)$, where $H_n$
is the function which turns $\mu$ the stationary stated for the tilted
generator $\mf M_{H_n}\,\ms L_n$.

Denote by $\ms E$ the oriented edges $(x,y)\in V \times V$ such that
$R(x,y)>0$. Let $a= \min\{ \mu(x) \, R(x,y) : (x,y)\in \ms E\} >
0$. As $R_n$ converges to $R$, there exists $n_0>0$ such that
$\min\{ \mu(x) \, R_n(x,y) : (x,y)\in \ms E\} \ge a/2$. Since the
Markov chain induced by the rates $R(x,y)$ is irreducible and the
rates $R_n$ converge to $R$, by \eqref{38} and Remark \ref{rm1}, there
exists a finite constant $C_0$ such that
\begin{equation}
\label{39}
\max_{y,x\in V} |\, H_n (y) - H_n(x)\,|\, \le\, C_0
\end{equation}
for all $n\ge n_0$. 

Therefore, there exist functions $G\colon V \to \bb R$ and a
subsequence $n'$ such that $H_{n'} (y) - H_{n'}(x) \to G(y) - G(x)$
for all $x$, $y\in V$. Hence, through this subsequence
$J^{(n)}_{H_n} (\mu)$ converges to $J_{H} (\mu) \le I(\mu)$.
This proves that $\limsup_n I_n(\mu) \le I(\mu)$ and completes the
proof of the lemma.
\end{proof}

We now remove the assumption that $\ms L$ is irreducible.

\begin{lemma}
\label{l26}
Suppose that the Markov chain induced by the generator $\ms L_n$ is
irreducible for all $n\ge 1$ and that
$R_n(x,y) \to R(x,y) \in \bb R_+$ for all $y\not = x\in V$. Then,
$I_n(\mu) \to I(\mu)$ for all $\mu\in \ms P(V)$ such that $\mu(x)>0$
for all $x\in V$. 
\end{lemma}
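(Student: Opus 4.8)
The plan is to prove the two $\Gamma$-convergence inequalities separately, recycling Lemma \ref{l27} wherever possible. For the lower bound $\liminf_n I_n(\mu) \ge I(\mu)$, the first half of the proof of Lemma \ref{l27} applies verbatim: the estimate $I_n(\mu) \ge J^{(n)}_H(\mu) \to J_H(\mu)$ followed by optimization over $H$ was derived there without ever using irreducibility of $\ms L$, as the author explicitly notes. Hence only the upper bound $\limsup_n I_n(\mu) \le I(\mu)$ requires genuinely new work, and this is exactly where the reducibility of $\ms L$ creates the difficulty.

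First I would note that, since $\ms L_n$ is irreducible and $\mu$ has full support, Lemma \ref{l02} provides an optimizer $H_n$ with $I_n(\mu)=J^{(n)}_{H_n}(\mu)$. Because $I_n(\mu)\ge 0$ by Lemma \ref{l01}, the identity $I_n(\mu)=\sum_{x,y}\mu(x)R_n(x,y)\,[\,1-e^{H_n(y)-H_n(x)}\,]$ immediately yields $\sum_{x,y}\mu(x)R_n(x,y)\,e^{H_n(y)-H_n(x)}\le \sum_{x,y}\mu(x)R_n(x,y)$, a quantity bounded in $n$ since $R_n\to R$. In particular each individual term $\mu(x)R_n(x,y)\,e^{H_n(y)-H_n(x)}$ is bounded by a constant $C'$.

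The crux is that, unlike in Lemma \ref{l27}, the differences $H_n(y)-H_n(x)$ need not be globally bounded, because a spanning set of edges keeping $\ms L_n$ irreducible may involve edges with $R(x,y)=0$ (hence $R_n(x,y)\to 0$), along which the differences can diverge; this is precisely the loss of the global bound \eqref{39}. What survives is a local bound. Denoting by $\ms Q_1,\dots,\ms Q_\ell$ the communicating classes of the limit chain $X_t$ and by $\ms D_1,\dots,\ms D_m$ those with at least two elements, I claim that for each $a$ there is a constant $C_a$ with $|H_n(y)-H_n(x)|\le C_a$ for all $x,y\in\ms D_a$ and $n$ large. Indeed, the bound $\mu(x)R_n(x,y)\,e^{H_n(y)-H_n(x)}\le C'$ forces $H_n(y)-H_n(x)\le \ln\big(C'/\mu(x)R_n(x,y)\big)$ on each edge with $R(x,y)>0$; since $\ms D_a$ is a communicating class, chaining these one-sided estimates along positive-rate paths inside $\ms D_a$ in both directions gives the two-sided bound, exactly as in Remark \ref{rm1} and in the proof of Lemma \ref{l09}.

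With this local bound in hand, I would decompose $I_n(\mu)=\sum_{x,y}\mu(x)R_n(x,y)\,[\,1-e^{H_n(y)-H_n(x)}\,]$ according to the limit class structure: edges internal to some $\ms D_a$, edges joining two distinct classes with $R(x,y)>0$, and edges with $R(x,y)=0$. For the latter two families I use only the elementary inequality $e^{a}-1\ge -1$, so that each such term is bounded above by $\mu(x)R_n(x,y)$; these contributions converge to $\sum_{a\neq b}\sum_{x\in\ms Q_a,\,y\in\ms Q_b}\mu(x)R(x,y)$, the $R=0$ edges contributing $0$ in the limit. For the internal edges the local bound lets me pass to a subsequence along which $H_n(y)-H_n(x)\to G_a(y)-G_a(x)$ on each $\ms D_a$; the internal sum then converges to $-\sum_{x,y\in\ms D_a}\mu(x)R(x,y)\,[\,e^{G_a(y)-G_a(x)}-1\,]\le \mu(\ms D_a)\,I_{\mf R_{\ms D_a}\ms L}(\mu_{\ms D_a})$. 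Summing over $a$ and invoking the formula \eqref{05} for $I(\mu)=K(\mu)$ from Lemma \ref{l04} (with $V_\mu=V$, so the crossing-to-complement term vanishes) gives $\limsup I_n(\mu)\le I(\mu)$ along the subsequence, and a standard subsequence-of-subsequence argument upgrades this to the full sequence. The main obstacle throughout is the failure of the global bound \eqref{39}, which forces the split between the internal edges, where compactness can be salvaged, and the inter-class edges, which must be discarded through the crude bound $e^{a}-1\ge -1$.
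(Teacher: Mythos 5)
Your proposal is correct and follows essentially the same route as the paper: the $\Gamma$-liminf is inherited from the first half of Lemma \ref{l27}, and the limsup is obtained by taking the optimizer $H_n$ from Lemmata \ref{l02} and \ref{l10}, splitting $J^{(n)}_{H_n}(\mu)$ according to the equivalence classes of the limit generator $\ms L$, using the class-local version of the bound \eqref{39} (via \eqref{38} and Remark \ref{rm1}) to extract convergent subsequences of the differences $H_n(y)-H_n(x)$ inside each $\ms D_a$, bounding the inter-class terms by $\mu(x)R_n(x,y)$, and identifying the resulting limit with $K(\mu)=I(\mu)$ through Lemma \ref{l04} and \eqref{05}. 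The only differences are presentational (your explicit derivation of the local bound from the term-by-term estimate, and the subsequence-of-subsequences remark), not conceptual.
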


\begin{proof}
In Lemma \ref{l27}, we proved that
$\liminf_{n\to\infty} I_n(\mu) \,\ge\, I(\mu)$.  Conversely, since the
Markov chain induced by the generator $\ms L_n$ is irreducible and the
support of $\mu$ is the set $V$, by Lemmata \ref{l02} and \ref{l10},
$I_n(\mu) = J^{(n)}_{H_n} (\mu)$, where $H_n$ is the function which
turns $\mu$ the stationary stated for the tilted generator
$\mf M_{H_n}\,\ms L_n$.

Denote by $\mf Q_a$, $1\le a\le \ell_\mu$ the equivalent classes of
the generator $\ms L$, and by $\mf D_a$, $1\le a\le m_\mu$ the ones
with at least two elements. By definition of $J^{(n)}_{H_n} (\mu)$,
\begin{equation*}
\begin{aligned}
& J^{(n)}_{H_n} (\mu) \; \le \;
\sum_{a=1}^{m_\mu} \mu (\mf D_a)\,
J^{(n)}_{\mf D_a, H_n} (\mu_{\mf D_a})
\; +\;
\sum_{a=1}^{\ell_\mu} \sum_{b\neq a}
\sum_{x\in \mf Q_a} \sum_{y\in \mf Q_b}
\mu(x) \, R_{n}(x,y)\;, \\
&\quad \text{where}\;\;
J^{(n)}_{\mf D_a, H_n} (\nu) \;=\; 
\sum_{x\in \mf D_a} \sum_{y\in \mf D_a\setminus \{x\}}
\nu(x) \, R_{n}(x,y)\, \big[\, 1 \,-\, e^{H_n(y) - H_n(x)}\,\big]
\end{aligned}
\end{equation*}
As $n\to\infty$, the second term converges to the same sum with $R$ in
place of $R_n$. We turn to the first term.

Fix $1\le a\le m_\mu$. By the arguments presented in the proof of
Lemma \ref{l27}, \eqref{39} holds provided the maximum is carried over
$x$, $y\in \mf D_a$. Therefore, by the end of the proof of that lemma,
$\limsup_n J^{(n)}_{\mf D_a, H_n} (\mu_{\mf D_a}) \le I_{\mf R_{\mf
D_a} \ms L} (\mu_{\mf D_a})$. Recollecting the previous estimates and
recalling Lemma \ref{l04} and definition \eqref{05} yields that
$\limsup_n I_n (\mu) \le I (\mu)$. This completes the proof of the
lemma.
\end{proof}

\subsection*{$\Gamma$-convergence}

In this subsection we present a result on $\Gamma$-convergence used in
the article. Recall from Section \ref{sec1} the definition of
$\Gamma$-convergence.  Fix a Polish space $\mc X$ and a functional
$U \colon \mc X \to [0,+\infty]$.

\begin{definition}
\label{l25}
A subset $\mc X_0$ of $\mc X$ is said to be $U$-dense if for every
$x\in \mc X$ such that $U(x)<\infty$, there exists a sequence
$(x_k: k \ge 1)$ such that $x_k\in \mc X_0$, $x_k \to x$ and
$U(x_k) \to U(x)$.
\end{definition}

\begin{lemma}
\label{l24}
Let $\mc X_0$ be a $U$-dense subset of $\mc X$. To show that $U$ is a
$\Gamma$-limsup for the sequence $U_n$, it is enough to show that for
every $x\in \mc X_0$, there exists a sequence $(x_n:n\ge 1)$ such that
$x_n\to x$ and \eqref{30} holds.
\end{lemma}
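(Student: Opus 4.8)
The plan is to reduce the general recovery-sequence statement to the hypothesis on $\mc X_0$ by combining $U$-density with a diagonal extraction, exploiting that a Polish space is metrizable. Fix $x\in\mc X$. If $U(x)=+\infty$ there is nothing to do: the constant sequence $x_n=x$ satisfies $\limsup_n U_n(x_n)\le +\infty=U(x)$, so \eqref{30} holds trivially. Assume therefore that $U(x)<\infty$.

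First I would invoke $U$-density to approximate $x$ from within $\mc X_0$: there is a sequence $(x_k:k\ge1)$ with $x_k\in\mc X_0$, $x_k\to x$, and $U(x_k)\to U(x)$. For each fixed $k$, the hypothesis of the lemma provides a recovery sequence $(x_{k,n}:n\ge1)$ with $x_{k,n}\to x_k$ as $n\to\infty$ and $\limsup_n U_n(x_{k,n})\le U(x_k)$. The goal is then to splice these doubly-indexed sequences into a single sequence $(x_n)$ that recovers $x$ itself.

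Concretely, fix a metric $d$ generating the topology of $\mc X$. For each $k$, using $x_{k,n}\to x_k$ and $\limsup_n U_n(x_{k,n})\le U(x_k)$, I would choose $N_k$ so that $d(x_{k,n},x_k)\le 1/k$ and $U_n(x_{k,n})\le U(x_k)+1/k$ for all $n\ge N_k$, and arrange that $N_1<N_2<\cdots$. Define $x_n:=x_{k,n}$ whenever $N_k\le n<N_{k+1}$ (and arbitrarily for $n<N_1$). Writing $k=k(n)$ for the unique index with $N_k\le n<N_{k+1}$, one has $k(n)\to\infty$ as $n\to\infty$, and the triangle inequality gives $d(x_n,x)\le d(x_{k,n},x_k)+d(x_k,x)\le (1/k)+d(x_k,x)\to 0$, so $x_n\to x$. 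Likewise $U_n(x_n)\le U(x_k)+1/k$, and letting $n\to\infty$ (hence $k\to\infty$) yields $\limsup_n U_n(x_n)\le U(x)$, which is exactly \eqref{30}.

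The only delicate point is the simultaneous bookkeeping in the diagonal step: the cutoffs $N_k$ must be taken large enough to control both the distance $d(x_{k,n},x_k)$ and the value $U_n(x_{k,n})$, and strictly increasing in $k$ so that the glued sequence is well defined and so that $k(n)\to\infty$. Beyond this standard diagonalization, everything is a routine two-parameter limit argument, so I do not expect any genuine obstacle.
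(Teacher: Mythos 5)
Your proposal is correct and follows exactly the paper's route: approximate $x$ by points of $\mc X_0$ using $U$-density, take the recovery sequences guaranteed by the hypothesis, and glue them by a diagonal extraction. The only difference is that you carry out explicitly (with the cutoffs $N_k$ and the metric bookkeeping) the step the paper dismisses as ``a classical diagonal argument,'' and your execution of it is sound.
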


\begin{proof}
Assume that for each $x\in \mc X_0$, there exists a sequence
$(x_n:n\ge 1)$ such that $x_n\to x$ and \eqref{30} holds.

Fix $x \in \mc X$.  We have to show that \eqref{30} holds for some
sequence $x_n \in \mc X$ which converges to $x$. If $U(x) = \infty$,
there is nothing to prove. Assume, therefore, that $U(x) < \infty$.
As $\mc X_0$ is $U$-dense, there exists a sequence $(x^{(k)}: k\ge 1)$
such that $x^{(k)} \in \mc X_0$, $x^{(k)} \to x$, and
$U (x^{(k)}) \to U(x)$.

Since the result holds for elements of $\mc X_0$, for each $k\ge 1$,
there exists a sequence $(x^{(k)}_n : n\ge 1)$ such that
$x^{(k)}_n \to x^{(k)}$,
$\limsup_{n\to\infty} U_n (x^{(k)}_n) \le U (x^{(k)})$. At this point,
a classical diagonal argument permits to construct a sequence $x_n$
such that $x_n \to x$, $\limsup_{n\to\infty} U_n (x_n) \le U (x)$, as
claimed.
\end{proof}

We return to the context of the article and assume that $\mc X = \ms
P(V)$. 

\begin{lemma}
\label{l15}
Let $\ms P_+$ be the subset of $\ms P(V)$ formed by the measures whose
support is $V$:
$\ms P_+ = \{\mu \in \ms P(V): \mu(x)>0 \; \forall\, x\in V\}$. The set
$\ms P_+$ is $I$-dense.
\end{lemma}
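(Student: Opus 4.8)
The plan is to approximate an arbitrary measure by convex combinations with a fixed full-support measure, exploiting the three elementary properties of $I$ recorded in Lemma \ref{l01}: convexity, lower semicontinuity, and a uniform upper bound. First I would observe that, by Lemma \ref{l01}, every $\nu\in\ms P(V)$ satisfies $I(\nu)\le \sum_{x\in V}\nu(x)\,\lambda(x)\le \max_{x\in V}\lambda(x)<\infty$, so $I$ is in fact finite on all of $\ms P(V)$. In particular the hypothesis $U(x)<\infty$ in Definition \ref{l25} holds for every $\mu\in\ms P(V)$, and the only thing to produce is, for each $\mu$, an approximating sequence in $\ms P_+$ along which $I$ converges.

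Fix $\mu\in\ms P(V)$. Let $\rho\in\ms P(V)$ be the uniform measure on $V$, which has full support, and set
\begin{equation*}
\mu_k \;:=\; \Big(1-\frac1k\Big)\,\mu \;+\; \frac1k\,\rho\;, \qquad k\ge 1\;.
\end{equation*}
Then $\mu_k(x)\ge \rho(x)/k>0$ for every $x\in V$, so $\mu_k\in\ms P_+$, and clearly $\mu_k\to\mu$ in the weak topology. It therefore remains to verify that $I(\mu_k)\to I(\mu)$.

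For the upper bound I would invoke the convexity of $I$ from Lemma \ref{l01} together with the finiteness established above:
\begin{equation*}
I(\mu_k) \;\le\; \Big(1-\frac1k\Big)\, I(\mu) \;+\; \frac1k\, I(\rho)\;;
\end{equation*}
since $I(\rho)<\infty$, letting $k\to\infty$ gives $\limsup_k I(\mu_k)\le I(\mu)$. For the matching lower bound, since $\mu_k\to\mu$ and $I$ is lower semicontinuous (again Lemma \ref{l01}), one has $\liminf_k I(\mu_k)\ge I(\mu)$. Combining the two bounds yields $I(\mu_k)\to I(\mu)$, which is precisely the $I$-density of $\ms P_+$. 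I do not expect any substantial obstacle here: the whole argument rests on convexity, lower semicontinuity, and the uniform finiteness of $I$, all of which are already available from Lemma \ref{l01}.
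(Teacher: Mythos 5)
Your proof is correct and is essentially identical to the paper's own argument: the same convex interpolation $\mu_k = (1-1/k)\mu + (1/k)\rho$ with the uniform measure, convexity of $I$ together with the bound $I \le \sum_x \mu(x)\lambda(x)$ from Lemma \ref{l01} for the upper bound, and lower semicontinuity for the lower bound. Nothing further is needed.
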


\begin{proof}
Fix $\mu\in \ms P(V)$ and let $\nu$ be the uniform probability measure
on $V$. Set $\mu_n \,=\, [1-(1/n)] \, \mu + (1/n)\,\nu$.  Clearly,
$\mu_n \in \ms P(V)$ and $\mu_n\to \mu$. It remains to show that
$I(\mu_n) \to I(\mu)$.

Recall from Lemma \ref{l01} the properties of the functional $I$.  By
the lower-semicontinuity of $I$, $I(\mu) \le \liminf_n I(\mu_n)$. By
convexity, $I(\mu_n) \le [1-(1/n)] \, I(\mu) \,+\, (1/n)\,I(\nu)$.
Since $I(\nu) \le |V|^{-1}\, \sum_{x\in V} \lambda (x) < \infty$,
$\limsup_n I(\mu_n) \le I(\mu)$, as claimed.
\end{proof}

\section{Trace process and level 2 rate functionals}
\label{sec-a3}

We examine in this section the effect of reducing the state space, by
taking the trace of the process, on the large deviations rate
functional. We first recall the definition of the trace process and
some of its properties.

Denote by $T^{\ms A}(t)$, $\ms A \subsetneq V$, the total time the
process $X_t$ spends in $\ms A$ in the time-interval $[0,t]$:
\begin{equation*}
T^{\ms A}(t)\;=\;\int_{0}^{t}\,\chi_{\ms A}(X_s)\,ds\;,
\end{equation*}
where, recall, $\chi_{\ms A}$ represents the indicator function of the set
$\ms A$. Denote by $S^{\ms A}(t)$ the generalized inverse of $T^{\ms A}(t)$:
\begin{equation*}
S^{\ms A}(t)\;=\;\sup\{\,s\ge0\,:\,T^{\ms A}(s)\le t\,\}\;.
\end{equation*}

The trace of $X_t$ on $\ms A$, denoted by
$\color{blue}(X^{\ms A}_t : t \ge 0)$, is defined by
\begin{equation}
\label{100}
X^{\ms A}_t\;=\; X_{S^{\ms A}(t)} \;;\;\;\;t\ge0\;.
\end{equation}
By Propositions 6.1 and 6.3 in \cite{bl2}, the trace process is an
irreducible, $\ms A$-valued continuous-time Markov chain, obtained by
turning off the clock when the process $X_t$ visits the set
$\ms A^{c}$, that is, by deleting all excursions to $\ms A^{c}$. For
this reason, it is called the trace process of $X_t$ on $\ms A$.  For
a $V$-valued Markov chain generator $\ms L$ and a proper subset
$\ms A$ of $V$, denote by $\color{blue} \mf T_{\ms A} \ms L$ the
generator of the trace process on $\ms A$.

Denote by $\color{blue} R^{T}_{\ms A}(\,\cdot\,,\,\cdot\,)$ the jump
rates of the trace process. Suppose that $\ms A=V\setminus \{z\}$. The
first equation after \cite[Corollary 6.2]{bl2} asserts that for all
$x$, $y\in \ms A$,
\begin{equation}
\label{53}
R^{T}_{V\setminus \{z\}} (x,y)
\;=\; R(x,y) \;+\; R(x,z) \, p(z,y)\;,
\end{equation}
where $p(\,\cdot\,,\,\cdot\,)$ represents the jump probability of the
chain $X_t$. In particular, $R^{T}_{V\setminus \{z\}} (x,y)
\ge R(x,y)$. Iterating this procedure yields that
\begin{equation}
\label{52}
R^{T}_{\ms A}(x,y) \;\ge\; R(x,y)
\end{equation}
for all $\ms A\subset V$, $x \neq y\in \ms A$.

We turn to the rate functional.  To simplify certain formulae, we
represent the rate function as
\begin{equation*}
I(\mu) \;=\; \sup_u \,-\, \int_V \frac{\ms L u}{u}\; d\mu\;,
\end{equation*}
where the supremum is carried out over all strictly positive functions
$u\colon V\to (0,\infty)$.  In consequence, in this section,
$\mf M_u \ms L$ stands for the tilted generator given by
\begin{equation}
\label{55}
(\mf M_u \ms L f)(x) \;=\; \sum_{y\in V} R_u(x,y) \, [\, f(y)
\,-\, f(x)\,]\;, \quad 
R_u(x,y) \,=\, \frac{1}{u(x)} \, R(x,y)\, u(y) \;.
\end{equation}

Denote by $I^{\rm T}_{\ms A}$ the large deviations rate functional
associated to the trace generator $\mf T_{\ms A} \ms L$:
\begin{equation*}
{\color{blue} I^{\rm T}_{\ms A} (\mu)} \;:=\; \sup_{u}
\,-\, \int_{\ms A} 
\frac{ (\mf T_{\ms A} \ms L)  \, u}{u} \; d\mu\;,
\quad \mu\in\ms P(\ms A)\;,
\end{equation*}
where the sup is carried over all functions
$u\colon\ms A \to (0, \infty)$.

Fix $u\colon\ms A \to (0,\infty)$, and denote by
$\color{blue} \mf H\, u = \mf H_{\ms L}\, u$ the $\ms L$-harmonic
extension of $u$ to $V$, that is, the solution of the Poisson equation
\begin{equation}
\label{07}
\left\{
\begin{aligned}
& \ms L v \,=\, 0\;, \quad V\setminus \ms A\;, \\
& v \,=\, u\;, \quad \ms A\;.
\end{aligned}
\right.
\end{equation}

Next proposition is the main result of this section. 

\begin{proposition}
\label{l05}
Assume that the Markov chain induced by the generator $\ms L$ is
irreducible.  Fix $\mu\in \ms P(V)$ and assume that its support,
denoted by $\ms A$, is a proper subset of $V$. Then,
\begin{equation*}
I^{\rm T}_{\ms A} (\mu) \;\le\; I(\mu)\;.
\end{equation*}
Conversely, let $u\colon \ms A \to (0, \infty)$ such that
\begin{equation*}
I^{\rm T}_{\ms A} (\mu) \;=\; -\, \int_{\ms A}
\frac{(\mf T _{\ms A} \ms L) \, u}{u}  \; d\mu  \;.
\end{equation*}
Denote by $v= \mf H_{\ms A} u$ the harmonic extension of $u$ to $V$
given by the solution of \eqref{07}. Let $\nu$ be the stationary state
of the tilted generator $\mf M_v \ms L$. Then,
\begin{equation*}
I^{\rm T}_{\ms A} (\mu) \;=\; 
\frac{1}{\nu(\ms A)} \; I(\nu) \;.
\end{equation*}
\end{proposition}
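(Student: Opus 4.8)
The plan is to transfer the variational problem defining $I^{\rm T}_{\ms A}$ to the full chain by replacing each test function $u\colon\ms A\to(0,\infty)$ with its harmonic extension $v=\mf H_{\ms A}u$, and to exploit two structural identities already available: the relation between the trace generator and the harmonic extension recorded in \eqref{09}, namely $(\ms L v)(x)=(\mf T_{\ms A}\ms L\,u)(x)$ for $x\in\ms A$, and the commutation between tracing and tilting in Lemma \ref{l06}, $\mf T_{\ms A}\,\mf M_v\ms L=\mf M_u\,\mf T_{\ms A}\ms L$. Throughout I would use that, since $u>0$ on $\ms A$, the extension $v$ is strictly positive on all of $V$ (by the representation \eqref{71}), so that $v$ is an admissible test function for $I$ and the tilted generator $\mf M_v\ms L$ is irreducible, hence has a unique stationary state.

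First I would prove the inequality $I^{\rm T}_{\ms A}(\mu)\le I(\mu)$. Fix $u\colon\ms A\to(0,\infty)$ and set $v=\mf H_{\ms A}u$. Because the support of $\mu$ is exactly $\ms A$, the integral $-\int_V(\ms L v/v)\,d\mu$ only sees the points of $\ms A$, where $v=u$ and, by \eqref{09}, $(\ms L v)(x)=(\mf T_{\ms A}\ms L\,u)(x)$; hence $-\int_V(\ms L v/v)\,d\mu=-\int_{\ms A}\big((\mf T_{\ms A}\ms L)u/u\big)\,d\mu$. Since $v>0$ is an admissible test function, the left-hand side is bounded above by $I(\mu)$, and taking the supremum over $u$ gives the claim.

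For the converse formula I would proceed as follows. As the trace process is irreducible and $\mu(x)>0$ for $x\in\ms A$, Lemma \ref{l10} shows that the optimiser $u$ makes $\mu$ the unique stationary state of $\mf M_u\,\mf T_{\ms A}\ms L$. On the other hand $\nu$ is stationary for $\mf M_v\ms L$, so by \cite[Proposition 6.3]{bl2} its conditioning $\nu(\,\cdot\,|\,\ms A)$, defined by \eqref{57}, is stationary for the trace $\mf T_{\ms A}\,\mf M_v\ms L$, which by Lemma \ref{l06} equals $\mf M_u\,\mf T_{\ms A}\ms L$. Uniqueness of the stationary state therefore forces $\nu(x)=\nu(\ms A)\,\mu(x)$ for every $x\in\ms A$. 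It then remains to evaluate $I(\nu)$: by Lemma \ref{l10}, $I(\nu)=-\int_V(\ms L v/v)\,d\nu$; the contributions of $x\in V\setminus\ms A$ vanish because $v$ is harmonic there, while on $\ms A$ I substitute $v=u$ together with \eqref{09} and $\nu(x)=\nu(\ms A)\,\mu(x)$ to obtain $I(\nu)=-\nu(\ms A)\int_{\ms A}\big((\mf T_{\ms A}\ms L)u/u\big)\,d\mu=\nu(\ms A)\,I^{\rm T}_{\ms A}(\mu)$, which is the asserted identity after dividing by $\nu(\ms A)$.

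The main obstacle is the careful bookkeeping of this two-way passage between $\ms A$ and $V$ through the harmonic extension, in particular the rigorous use of \eqref{09} and of the commutation Lemma \ref{l06}. Once these are in place, the only remaining subtlety is the uniqueness argument identifying $\nu(\,\cdot\,|\,\ms A)$ with $\mu$, which rests on the irreducibility of both the trace process and the tilted chain $\mf M_v\ms L$, the latter being a consequence of the strict positivity of $v$ on all of $V$.
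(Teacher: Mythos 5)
Your proposal is correct and follows essentially the same route as the paper's proof: both parts rest on the harmonic extension together with \eqref{09}, the commutation $\mf T_{\ms A}\,\mf M_v\ms L=\mf M_u\,\mf T_{\ms A}\ms L$ of Lemma \ref{l06}, Proposition 6.3 of \cite{bl2}, and the identification $\mu(\,\cdot\,)=\nu(\,\cdot\,)/\nu(\ms A)$ via uniqueness of stationary states, concluding with Lemma \ref{l10}. The only (harmless) difference is that for the inequality $I^{\rm T}_{\ms A}(\mu)\le I(\mu)$ you bound the value of every test function $u$ and take a supremum, whereas the paper invokes Lemma \ref{l02} to work directly with the optimizer.
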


\subsection*{Harmonic extension}

The harmonic extension has a stochastic representation.  Denote by
$H_{\ms A}$, $H^+_{\ms A}$, ${\ms A}\subset V$, the hitting and return
time of ${\ms A}$:
\begin{equation} 
\label{201}
{\color{blue} H_{\ms A} } \;: =\;
\inf \big \{t>0 : X_t \in {\ms A} \big\}\;,
\quad
{\color{blue} H^+_{\ms A}} \;: =\;
\inf \big \{t>\tau_1 : X_t \in {\ms A} \big\}\; ,  
\end{equation}
where $\tau_1$ represents the time of the first jump of the chain
$X_t$: $\color{blue} \tau_1 = \inf\{t>0 : X_t \not = X_0\}$. By the
strong Markov property, the solution of the Poisson equation can be
represented as
\begin{equation}
\label{71}
(\mf H u) \, (x) \,=\, \;
\mb E_x[\, u(X_{H_{\ms A}}) \,]\;, \quad x\in V \;.
\end{equation}
In particular,
\begin{equation*}
\min_{y\in\ms A} u(y)
\;\le\;  \min_{x\in V} \, (\mf H \, u)\,  (x)
\;\le\; \max_{x\in V} \, (\mf H \, u)\,  (x)
\;\le\; \max_{y\in\ms A} u(y)  \;.
\end{equation*}
Moreover, by \cite[Lemma A.1]{bgl2},
\begin{equation}
\label{09}
[\, (\mf T_{\ms A} \ms L) \, u \,] (x)
\;=\;  [\, \ms L (\mf H\,  u)\,] (x)\;, \quad  x\in \ms A\;. 
\end{equation}

\begin{lemma}
\label{l08}
Fix a subset $\ms A \subsetneq \ms B \subsetneq V$, and a function
$u\colon \ms A \to \bb R$. Then, 
\begin{equation*}
(\mf H_{\mf T_{\ms B} \ms L} \, u)\, (x) \;=\; 
(\mf H_{\ms L} \, u)\, (x)\;, \quad x\in\ms B\;.
\end{equation*}
\end{lemma}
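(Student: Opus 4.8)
The plan is to show that the restriction to $\ms B$ of the global harmonic extension $\mf H_{\ms L} u$ solves the Poisson equation defining $\mf H_{\mf T_{\ms B}\ms L} u$, and then to invoke uniqueness. Write $v := \mf H_{\ms L} u$ for the $\ms L$-harmonic extension of $u$ from $\ms A$ to $V$, given by \eqref{07}, and set $w := v|_{\ms B}$.

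The first step is a tower property for harmonic extensions: $v$ is itself the $\ms L$-harmonic extension of $w$ from $\ms B$, i.e. $\mf H_{\ms L} w = v$. Indeed, since $\ms A \subset \ms B$ we have $V\setminus\ms B \subset V\setminus\ms A$, so $v$ is $\ms L$-harmonic on $V\setminus\ms B$; as $v = w$ on $\ms B$ by definition, $v$ solves \eqref{07} with $(\ms A, u)$ replaced by $(\ms B, w)$, and uniqueness of the solution gives $\mf H_{\ms L} w = v$.

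The second step applies the identity \eqref{09} with $\ms B$ in place of $\ms A$ and the function $w$: for every $x\in\ms B$,
\[
[(\mf T_{\ms B}\ms L)\, w](x) \;=\; [\ms L\, (\mf H_{\ms L} w)](x) \;=\; [\ms L\, v](x).
\]
For $x\in\ms B\setminus\ms A$ this vanishes, because $v$ is $\ms L$-harmonic on $V\setminus\ms A\supset\ms B\setminus\ms A$. Since moreover $w = v|_{\ms A} = u$ on $\ms A$, the function $w$ satisfies the Poisson equation \eqref{07} for the trace generator $\mf T_{\ms B}\ms L$ on $\ms B$ with boundary data $u$ on $\ms A$. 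Uniqueness of this solution, the trace chain on $\ms B$ being irreducible, forces $w = \mf H_{\mf T_{\ms B}\ms L} u$ on $\ms B$, which is the assertion.

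The main obstacle is the tower property of the first step, namely verifying that passing to the trace generator and taking harmonic extensions commute in the required way; once it is phrased through uniqueness of solutions to \eqref{07} it is straightforward. For intuition one may instead argue probabilistically via \eqref{71}: the trace process on $\ms B$ deletes only the excursions into $\ms B^c$, and since $\ms A\subset\ms B$ no such excursion meets $\ms A$, so the entrance point of $\ms A$ is the same for $X_t$ and for $X^{\ms B}_t$; hence $\mb E^{\ms B}_x[u(X^{\ms B}_{H_{\ms A}})] = \mb E_x[u(X_{H_{\ms A}})]$ for $x\in\ms B$, which is exactly the claimed identity.
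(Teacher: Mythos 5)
Your proposal is correct, and your main argument takes a genuinely different route from the paper's. You argue analytically: setting $v = \mf H_{\ms L}u$ and $w = v|_{\ms B}$, you use the tower property $\mf H_{\ms L} w = v$ (which follows from uniqueness of solutions to \eqref{07}, since $v$ is $\ms L$-harmonic on $V\setminus\ms B \subset V\setminus\ms A$) together with the identity \eqref{09} applied to the pair $(\ms B, w)$, to get $[(\mf T_{\ms B}\ms L)\,w](x) = [\ms L v](x) = 0$ for $x\in\ms B\setminus\ms A$; since also $w=u$ on $\ms A$, uniqueness of the Poisson equation for the (irreducible) trace chain on $\ms B$ gives $w = \mf H_{\mf T_{\ms B}\ms L}u$. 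The paper instead proves the lemma in one probabilistic stroke: by the stochastic representation \eqref{71}, both sides are expectations of $u$ evaluated at the entrance point of $\ms A$, and since the trace on $\ms B$ only deletes excursions into $\ms B^c$, which cannot meet $\ms A\subset\ms B$, the entrance points coincide pathwise, $X^{\ms B}_{H_{\ms A}(X^{\ms B})} = X_{H_{\ms A}}$ almost surely for $x\in\ms B$ --- which is exactly the argument you relegate to an ``intuition'' remark at the end. The paper's route is shorter and needs only \eqref{71}; yours stays entirely at the level of generators and makes explicit the structural inputs used, namely uniqueness of solutions of \eqref{07} and the commutation identity \eqref{09} (Lemma A.1 of \cite{bgl2}). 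Both arguments implicitly rely on irreducibility (so that the chain hits $\ms A$ almost surely, equivalently so that \eqref{07} has a unique solution), which is the standing assumption of that appendix.
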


This result asserts that the $\ms L$-harmonic extension of $u$ to $V$
coincides on the set $\ms B$ with the $(\mf T_{\ms B} \ms L)$-harmonic
extension of $u$ to $\ms B$. More precisely, denote by
$v=\mf H_{\ms L} u$ the $\ms L$-harmonic extension of $u$ to $V$ and
by $v_{\ms B} \colon \ms B \to \bb R$ its restriction to $\ms B$,
defined as $v_{\ms B} (x) = v(x)$, $x\in \ms B$. Lemma \ref{l08}
states that the function $v_{\ms B}$ is the
$(\mf T_{\ms B} \ms L)$-harmonic extension of $u$ to $\ms B$.  In
other words, that $v_{\ms B}$ solves equation \eqref{07} with $\ms B$,
$\mf T_{\ms B} \ms L$ replacing $V$, $\ms L$, respectively.

\begin{proof}[Proof of Lemma \ref{l08}]
Denote by $X^{\ms B}_t$ the trace of the Markov chain $X_t$ on $\ms B$
and by $H_{\ms A} (X^{\ms B})$ its hitting time of the set $\ms
A$. Clearly, starting from $x\in \ms B$,
$X^{\ms B}_{H_{\ms A} (X^{\ms B})} = X_{H_{\ms A}}$ almost surely, so
that
\begin{equation*}
\mb E_x[\, u(X^{\ms B}_{H_{\ms A} (X^{\ms B})}) \,] \;=\;
\mb E_x[\, u(X_{H_{\ms A}}) \,]
\end{equation*}
for all $x\in \ms B$. By \eqref{71}, the left-hand side of this
equation is $(\mf H_{\mf T_{\ms B} \ms L} \, u)\, (x)$, and the
right-hand side is $(\mf H_{\ms L} \, u)\, (x)$. This completes the
proof of the lemma.
\end{proof}

Enumerate the set $V\setminus \ms A$ as $\{x_1, \dots, x_p\}$, and let
$\ms A_0=V$, $\ms A_k = V \setminus \{x_1, \dots , x_k\}$,
$1\le k\le p$, so that $\ms A = \ms A_p$. Fix
$u\colon \ms A \to (0,\infty)$, and denote by $v$ its $\ms L$-harmonic
extension to $V$, given by \eqref{07}. Let
$v_k\colon \ms A_k \to (0,\infty)$ be the restriction of $v$ to
$\ms A_k$, $1\le k\le p$, and $v_0=v$.

\begin{corollary}
\label{l07}
For all $1\le k\le p$, 
\begin{equation*}
[\, (\mf T_{\ms A_{k-1}}\, \ms L)\, 
v_{k-1}\,]\,  (y) \;=\; 0 \;, \quad y\in \ms A_{k-1} \setminus \ms A\;.
\end{equation*}
In particular, $v_{k-1}$ is the $(\mf  T_{\ms A_{k-1}}\, \ms
L)$-harmonic extension of $v_j$ for $k\le j\le p$.
\end{corollary}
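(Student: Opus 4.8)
The plan is to reduce both assertions to equation \eqref{09}, which rewrites the trace generator applied to a function as $\ms L$ applied to its $\ms L$-harmonic extension. Fix $1 \le k \le p$ and observe that $\ms A_{k-1} \setminus \ms A = \{x_k, \dots, x_p\} \subset V \setminus \ms A$, while $V \setminus \ms A_{k-1} = \{x_1, \dots, x_{k-1}\} \subset V \setminus \ms A$ as well.

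The first step is to identify $\mf H_{\ms L} v_{k-1}$, the $\ms L$-harmonic extension of $v_{k-1}$ to $V$. By construction $v_{k-1}$ is the restriction of $v$ to $\ms A_{k-1}$, and, since $v$ is $\ms L$-harmonic on $V \setminus \ms A$, it is in particular $\ms L$-harmonic on the smaller set $V \setminus \ms A_{k-1}$. Thus $v$ solves the Poisson equation \eqref{07} with $\ms A$ replaced by $\ms A_{k-1}$ and boundary datum $v_{k-1}$; by the stochastic representation \eqref{71}, together with the irreducibility of the chain, this solution is unique, whence $\mf H_{\ms L} v_{k-1} = v$. Applying \eqref{09} with the set $\ms A_{k-1}$ and the function $v_{k-1}$ (for $k=1$ one simply has $\ms A_0 = V$ and $\mf T_V \ms L = \ms L$, so that the identity below is the very definition of $v$) then gives
\[
[\, (\mf T_{\ms A_{k-1}} \ms L)\, v_{k-1}\,](y) \;=\; [\, \ms L (\mf H_{\ms L} v_{k-1})\,](y) \;=\; (\ms L v)(y)\;, \qquad y \in \ms A_{k-1}\;.
\]
For $y \in \ms A_{k-1} \setminus \ms A \subset V \setminus \ms A$ the right-hand side vanishes because $v$ is $\ms L$-harmonic off $\ms A$, which proves the first assertion.

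For the final claim, fix $k \le j \le p$. The $(\mf T_{\ms A_{k-1}} \ms L)$-harmonic extension of $v_j$ to $\ms A_{k-1}$ is the unique function on $\ms A_{k-1}$ that agrees with $v_j$ on $\ms A_j$ and is annihilated by $\mf T_{\ms A_{k-1}} \ms L$ on $\ms A_{k-1} \setminus \ms A_j = \{x_k, \dots, x_j\}$; uniqueness holds since the trace process on $\ms A_{k-1}$ is irreducible by Propositions 6.1 and 6.3 in \cite{bl2}. The function $v_{k-1}$ satisfies both requirements: its restriction to $\ms A_j$ equals $v_j$, and, by the first part, $(\mf T_{\ms A_{k-1}} \ms L)\, v_{k-1}$ vanishes on $\ms A_{k-1} \setminus \ms A \supseteq \{x_k, \dots, x_j\}$. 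Hence $v_{k-1}$ is that harmonic extension; alternatively, for $k \ge 2$ this is immediate from Lemma \ref{l08} with $\ms B = \ms A_{k-1}$, combined with the identification of the previous paragraph applied to $v_j$.

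The only delicate point is the identification $\mf H_{\ms L} v_{k-1} = v$, namely recognizing that the globally $\ms L$-harmonic function $v$ is simultaneously the $\ms L$-harmonic extension relative to each intermediate set $\ms A_{k-1}$; once this is granted, everything else is a direct application of \eqref{09} and the definition of $v$.
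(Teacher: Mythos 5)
Your proof is correct, but it reaches the first assertion by a genuinely different route than the paper. The paper's proof is a one-line appeal to Lemma \ref{l08}: taking $\ms B = \ms A_{k-1}$ there, the restriction $v_{k-1}$ of $v=\mf H_{\ms L}\,u$ is identified with the $(\mf T_{\ms A_{k-1}}\ms L)$-harmonic extension of $u$ to $\ms A_{k-1}$, so the vanishing of $(\mf T_{\ms A_{k-1}}\ms L)\,v_{k-1}$ on $\ms A_{k-1}\setminus\ms A$ is just the definition of that extension; Lemma \ref{l08} is in turn proved probabilistically, via the fact that the trace chain on $\ms A_{k-1}$ first hits $\ms A$ at the same point as the original chain. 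You bypass Lemma \ref{l08} entirely: you identify $\mf H_{\ms L}\,v_{k-1}=v$ by uniqueness of the Dirichlet problem \eqref{07} (legitimate, since the chain is irreducible in the setting of this appendix, so \eqref{71} furnishes the unique solution), and then invoke \eqref{09} for the pair $(\ms A_{k-1},v_{k-1})$ to get $(\mf T_{\ms A_{k-1}}\ms L)\,v_{k-1}=\ms L v$ on all of $\ms A_{k-1}$, which vanishes on $\ms A_{k-1}\setminus\ms A\subset V\setminus\ms A$. The mathematical content is the same in both cases — consistency of harmonic extensions across nested sets — but your version trades the paper's probabilistic consistency lemma for an analytic uniqueness argument, and it yields marginally more: the identity $(\mf T_{\ms A_{k-1}}\ms L)\,v_{k-1}=\ms L v$ on the whole of $\ms A_{k-1}$, not merely its vanishing off $\ms A$. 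For the second assertion the two arguments coincide ($v_{k-1}$ agrees with $v_j$ on $\ms A_j$ and, by the first part, is trace-harmonic on $\ms A_{k-1}\setminus\ms A_j\subset\ms A_{k-1}\setminus\ms A$); your explicit remark that uniqueness of the trace-harmonic extension, via irreducibility of the trace process, is what makes the phrase ``is the harmonic extension'' meaningful is a point the paper leaves implicit.
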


\begin{proof}
Fix $1\le k\le p$.  By Lemma \ref{l08}, $v_{k-1}$ is the
$(\mf T_{\ms A_{k-1}} \ms L)$-harmonic extension of $u$ to
$\ms A_{k-1}$. Hence, for all $y\in \ms A_{k-1} \setminus \ms A$,
$[\, (\mf T_{\ms A_{k-1}} \ms L) \, v_{k-1}\,] \, (y)=0$, as
claimed. By definition, for $k\le j\le p$, $v_{k-1}$ and $v_j$
coincide on $\ms A_j$, which proves the second assertion of the
corollary. 
\end{proof}

\subsection*{Tilted dynamics}

Fix $v\colon V \to (0,\infty)$, and recall the definition of the jump
rates $R_v$ introduced in \eqref{55}. Denote by $\lambda_v (x)$,
$\lambda (x)$ the holding times at $x$ associated to the rates
$R_v(\,\cdot\,,\,\cdot\,)$, $R(\,\cdot\,,\,\cdot\,)$, respectively.
Assume that $(\ms L v)(x)=0$. Then,
\begin{equation}
\label{06}
\lambda_v (x) \;=\; \lambda (x)
\end{equation}

Indeed, since $(\ms L v)(x) =0$,
\begin{equation*}
\sum_{y\in V} R(x, y) \, v(y)
\;=\; \sum_{y\in V} R(x, y) \, v(x)\;.
\end{equation*}
Hence,
\begin{equation*}
\lambda_v (x)
\;=\; \frac{1}{v(x)} \sum_{y\in V} R(x, y) \, v(y)
\;=\; \frac{1}{v(x)} \sum_{y\in V} R(x, y) \, v(x)
\;=\; \lambda (x)\;, 
\end{equation*}
as claimed.

\begin{lemma}
\label{l06}
Fix $u\colon \ms A \to (0,\infty)$. Let $v\colon V\to (0,\infty)$ be
its harmonic extension to $V$ defined by \eqref{07}. Then,
\begin{equation*}
\mf T_{\ms A} \, (\mf M_v \ms L)
\;=\;  \mf M_ u\, (\mf T_{\ms A}  \ms L)\;.
\end{equation*}
\end{lemma}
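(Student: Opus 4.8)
The plan is to prove the operator identity by evaluating both sides on an arbitrary test function $w\colon\ms A\to\bb R$ and comparing their values at each $x\in\ms A$. The single computational tool I would use is the ground-state (Doob) transform identity: for any strictly positive $v\colon V\to(0,\infty)$ and any $f\colon V\to\bb R$,
\begin{equation*}
(\mf M_v\ms L\, f)(x) \;=\; \frac{1}{v(x)}\,\big[\,\ms L(vf)(x) \,-\, f(x)\,(\ms L v)(x)\,\big]\;,
\end{equation*}
which follows immediately from the definition \eqref{55} of $R_v$ by adding and subtracting $v(x)f(x)$ inside the sum. The same identity applied to the trace generator $\mf T_{\ms A}\ms L$ and the tilt $u$ will handle the right-hand side, while the relation \eqref{09} is the bridge between the two traces. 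Note first that since $v>0$, tilting does not affect irreducibility, so $\mf T_{\ms A}(\mf M_v\ms L)$ and the $(\mf M_v\ms L)$-harmonic extension are well defined.

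First I would compute the right-hand side. Applying the Doob identity to $\ms M := \mf T_{\ms A}\ms L$ with tilt $u$ gives, for $x\in\ms A$,
\begin{equation*}
(\mf M_u\,\mf T_{\ms A}\ms L\, w)(x) \;=\; \frac{1}{u(x)}\,\big[\,(\mf T_{\ms A}\ms L)(uw)(x) \,-\, w(x)\,(\mf T_{\ms A}\ms L\, u)(x)\,\big]\;.
\end{equation*}
Here I invoke \eqref{09}, which states that $(\mf T_{\ms A}\ms L)\,g = \ms L(\mf H g)$ on $\ms A$ for every $g\colon\ms A\to\bb R$. Since $\mf H u = v$ by definition of $v$, this turns the right-hand side into $\frac{1}{u(x)}[\ms L(\mf H(uw))(x) - w(x)(\ms L v)(x)]$ for $x\in\ms A$.

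Next I would compute the left-hand side. Write $\tilde{\ms L} := \mf M_v\ms L$ and let $\phi := \mf H_{\tilde{\ms L}}\,w$ denote its harmonic extension of $w$; by the analogue of \eqref{09} for $\tilde{\ms L}$ one has $(\mf T_{\ms A}\tilde{\ms L}\,w)(x) = (\tilde{\ms L}\phi)(x)$ on $\ms A$. The crux is to identify $\phi$ explicitly. Because $\ms L v = 0$ on $V\setminus\ms A$, the Doob identity reduces there to $\tilde{\ms L}\phi = \frac{1}{v}\ms L(v\phi)$, so $\tilde{\ms L}\phi = 0$ on $V\setminus\ms A$ is equivalent to $\ms L(v\phi) = 0$ on $V\setminus\ms A$; since $v\phi = uw$ on $\ms A$, uniqueness of the solution of the Poisson equation \eqref{07} forces $v\phi = \mf H(uw)$ on all of $V$. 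Substituting this back into the Doob identity for $\tilde{\ms L}$ and using $v = u$, $\phi = w$ on $\ms A$ yields $(\mf T_{\ms A}\tilde{\ms L}\,w)(x) = \frac{1}{u(x)}[\ms L(\mf H(uw))(x) - w(x)(\ms L v)(x)]$, which is exactly the formula obtained for the right-hand side.

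The main obstacle is precisely this middle step: identifying the $\tilde{\ms L}$-harmonic extension $\phi$ of $w$ as $\frac{1}{v}\,\mf H_{\ms L}(uw)$. This is where the harmonicity of $v$ off $\ms A$ is indispensable and where uniqueness of harmonic extensions (hence irreducibility of $\ms L$, or at least almost-sure finiteness of $H_{\ms A}$) enters. Once this identification is secured, both sides collapse to the common expression above and the lemma follows by comparing them pointwise on $\ms A$.
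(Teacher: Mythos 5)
Your proof is correct, but it follows a genuinely different route from the paper's. The paper works at the level of jump rates: it first treats the case where $\ms A$ misses a single point, computing the rates of $\mf T_{\ms A}\,\mf M_v\ms L$ explicitly via the one-point trace formula \eqref{53} and using the identity $\lambda_v(x_0)=\lambda(x_0)$ from \eqref{06} (valid because $v$ is harmonic at $x_0$), and then extends to general $\ms A$ by removing points one at a time, which requires the auxiliary results Lemma \ref{l08} and Corollary \ref{l07} to know that the successive restrictions of $v$ remain harmonic extensions for the intermediate trace generators. You instead work at the level of operators acting on test functions: your Doob-transform identity $(\mf M_v\ms L\,f)=\tfrac1v[\ms L(vf)-f\,\ms Lv]$, combined with \eqref{09} applied both to $\ms L$ and to the tilted generator $\mf M_v\ms L$, reduces everything to the single identification $v\cdot\mf H_{\mf M_v\ms L}w=\mf H_{\ms L}(uw)$, which you obtain from the harmonicity of $v$ off $\ms A$ together with uniqueness of the Poisson problem \eqref{07}. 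The payoff of your approach is that it handles arbitrary $\ms A$ in one stroke, with no induction and no explicit rate computation; the cost is that you must justify that \eqref{09} and the harmonic-extension machinery apply to the tilted generator (which you do, noting that tilting by a positive function preserves irreducibility) and that \eqref{09} holds for signed functions such as $uw$ — harmless here, since the identity is linear and the state space is finite. The paper's approach, by contrast, stays entirely within elementary rate manipulations but leans on the two preparatory results about iterated traces. Both proofs use the same two structural facts — harmonicity of $v$ outside $\ms A$ and agreement of $v$ with $u$ on $\ms A$ — just packaged differently.
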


\begin{proof}
We first prove the lemma for $\ms A = V \setminus \{x_0\}$. Recall
that we denote by $R_v(x,y)$ the jump rates of the tilted generator
$\mf M_v \ms L$. Denote by $R_{v, \ms A}(x,y)$ the jump rates of the
trace generator $\mf T_{\ms A} \mf M_v \ms L$.  By \eqref{53}, for all
$x \neq y \in\ms A$,
\begin{equation*}
R_{v, \ms A}(x,y) \;=\; R_{v}(x,y) \;+\;
\frac{1}{\lambda_v (x_0)}\, R_{v}(x,x_0)\, R_{v}(x_0, y)\;, 
\end{equation*}
where, recall, $\lambda_v (x_0)$ stands for the holding time at $x_0$
associated to the rates $R_v(x,y)$. Since $v$ is harmonic at $x_0$, by
\eqref{06} and the definition of $R_v$, the previous expression is
equal to
\begin{equation*}
\frac{v(y)}{v(x)}\, \Big\{R (x,y) \;+\;
\frac{1}{\lambda (x_0)}\, R (x,x_0)\, R (x_0, y) \,\Big\}\;.
\end{equation*}
Since $v$ and $u$ coincide on $\ms A$, we may replace $v(x)$, $v(y)$
by $u(x)$, $u(y)$, respectively. Representing by $R_{\ms A}(z,w)$ the
jump rates associated to the generator $\mf T_{\ms A} \ms L$, by
\eqref{53} once more, the previous expression is equal to
\begin{equation*}
\frac{u(y)}{u(x)}\, R_{\ms A} (x,y) \;,
\end{equation*}
which proves the lemma in the case where $V\setminus \ms A$ is a
singleton.

We extend the result to arbitrary sets $\ms A$. Fix a set $\ms A$, and
recall the notation introduced above Corollary \ref{l07}. Fix
$1\le j \le p$. By Corollary \ref{l07}, $v_{k-1}$ is the
$(\mf T_{\ms A_{k-1}}\, \ms L)$-harmonic extension of $v_k$. Thus, by
the first part of the proof applied to the generator
$\mf T_{\ms A_{k-1}} \ms L$,
\begin{equation}
\label{10}
\mf T_{\ms A_k}\, \mf M_{v_{k-1}}\, \mf T_{\ms A_{k-1}} \ms L
\;=\; \mf M_{v_k}\, \mf T_{\ms A_k}\, \mf T_{\ms A_{k-1}}\, \ms L
\;=\; \mf M_{v_k}\, \mf T_{\ms A_k}\, \ms L
\end{equation}
because $\mf T_{\ms A_k}\, \mf T_{\ms A_{k-1}}\, \ms L = \mf T_{\ms
A_k}\, \ms L$. 

Since $\ms A_0=V$, $\mf T_{\ms A_0}\, \ms L = \ms L$, and \eqref{10}
for  $k=1$ states that
\begin{equation*}
\mf T_{\ms A_1}\, \mf M_{v_{0}}\, \ms L
\;=\; \mf M_{v_1}\, \mf T_{\ms A_1}\, \ms L \;.
\end{equation*}
Applying $\mf T_{\ms A_2}$ on both sides of this identity and then
\eqref{10} yields that
\begin{equation*}
\mf T_{\ms A_2}\, \mf M_{v_{0}}\, \ms L
\;=\; \mf M_{v_2}\, \mf T_{\ms A_2}\, \ms L \;.
\end{equation*}
Iterating this procedure completes the proof of the lemma as $v_0=v$,
$v_p=u$, $\ms A_0 = V$, $\ms A_p = \ms A$.
\end{proof}

\subsection*{Proof of Proposition \ref{l05}}

By \cite[Proposition 6.1]{bl2}, the Markov chain induced by the trace
generator $\mf T_{\ms A}\, \ms L$ is irreducible. Hence, since
$\mu(x)>0$ for all $x\in \ms A$, by Lemma \ref{l02}, there exists
$u \colon\ms A\to (0,\infty)$ such that
\begin{equation*}
I^{\rm T}_{\ms A} (\mu) \;=\; -\, \int_{\ms A}
\frac{(\mf T _{\ms A} \ms L) \, u}{u}  \; d\mu
\;.
\end{equation*}
Denote by $v$ the harmonic extension of $u$ to $V$ given by the
solution of \eqref{07}. By \eqref{09},
$(\ms L v)(x) = [\, (\mf T _{\ms A} \ms L) \, u\,](x)$ for all
$x\in \ms A$.  Hence, the right-hand side
of the previous displayed equation is equal to
\begin{equation*}
-\, \int_{\ms A} \frac{\ms L \, v}{v} \; d\mu
\;=\; -\, \int_{V} \frac{\ms L \, v}{v} \; d\mu \;\le\; I(\mu) \;. 
\end{equation*}
The identity follows from the fact that $\ms A$ is the support of
$\mu$ (or from the fact that $v$ is harmonic in $\ms A^c$), and the
inequality from the definition of the functional $I$.

We turn to the converse assertion. By definition of $u$, $v$ and
\cite[Lemma A.1]{bgl2},
\begin{equation*}
I^{\rm T}_{\ms A} (\mu) \;=\; -\, \int_{\ms A}
\frac{(\mf T _{\ms A} \ms L) \, u}{u}  \; d\mu
\;=\; -\, \int_{\ms A}
\frac{\ms L \, v}{v}  \; d\mu \;.
\end{equation*}

Recall that $\nu$ represents the stationary state of the tilted
generator $\mf M_v \ms L$.  By \cite[Proposition 6.3]{bl2}, the
measure $\nu$ conditioned to $\ms A$ is the stationary state of the
trace (the Markov chain induced by the generator
$\mf T_{\ms A} \mf M_v \ms L$). Since, by Lemma \ref{l06},
$\mf T_{\ms A} \mf M_v \ms L = \mf M_ u\mf T_{\ms A} \ms L$, $\nu$
conditioned to $\ms A$ is the stationary state of the chain associated
to $\mf M_ u\mf T_{\ms A} \ms L$. By Lemma \ref{l01}, $\mu$ is
stationary for $\mf M_ u\mf T_{\ms A} \ms L$ as well. Since the chain
$X_t$ is irreducible, so is the trace and the tilted trace. Thus, by
uniqueness, $\mu(\,\cdot\,) = \nu(\,\cdot\,)/\nu(\ms A)$, and the
right-hand side of the previous displayed equation can be written as
\begin{equation*}
-\, \frac{1}{\nu(\ms A)}\, \int_{\ms A} \frac{\ms L \, v}{v}  \; d\nu
\;=\; -\, \frac{1}{\nu(\ms A)}\, \int_{V} \frac{\ms L \, v}{v}  \; d\nu
\end{equation*}
because $v$ is harmonic on $V\setminus \ms A$.
Since $\nu$ is stationary for the tilted generator $\mf M_v \ms L$, by
Lemma \ref{l10}, the right-hand side is equal to $\nu(\ms A)^{-1} I(\nu)$.
This completes the proof of the lemma. \qed


\begin{thebibliography}{99}

\bibitem{agl} I. Armend\'ariz, S. Gro{\ss}kinsky, M. Loulakis. Zero
range condensation at criticality. Stochastic Process. Appl. {\bf
123}, 346--3496 (2013).

\bibitem{bl2} J. Beltr\'an, C. Landim: Tunneling and metastability of
continuous time Markov chains. J. Stat. Phys.  {\bf 140} 1065--1114
(2010).

\bibitem{bl3} J. Beltr\'an, C. Landim: Metastability of reversible
condensed zero range processes on a finite set, Probab. Theory
Relat. Fields {\bf 152}, {781--807} (2012).


\bibitem{bl7} J. Beltr\'an, C. Landim; Tunneling and metastability of
continuous time Markov chains II. J. Stat. Phys.  {\bf 149}, 598--618
(2012).

\bibitem{bgl2} L. Bertini, D. Gabrielli, C. Landim: Metastable
$\Gamma$-expansion of finite state Markov chains level two large
deviations rate functions.  arXiv:2207.02588, (2022).

\bibitem{bdg17} A. Bianchi, S. Dommers, C. Giardin{\`a}: Metastability
in the reversible inclusion process.  Electron. J. Probab. {\bf 22},
paper no. 70, (2017).

\bibitem{begk02} A. Bovier, M. Eckhoff, V. Gayrard, M. Klein:
Metastability in reversible diffusion process I. Sharp asymptotics for
capacities and exit times. J. Eur. Math. Soc. 6, 399–424 (2004)

\bibitem{Br} A. Braides; \emph{$\Gamma$-Convergence for beginners.}
Oxford University Press, Oxford, 2002.

\bibitem{CCG-14} J. Cao, P. Chleboun, S. Grosskinsky: Dynamics of
condensation in the totally asymmetric inclusion
process. J. Stat. Phys.  {\bf 155}, 523--543 (2014).

\bibitem{dv75} M. D. Donsker, S. R. S. Varadhan: Asymptotic evaluation
of certain Markov proces expectations for large time, I, Comm. Pure
Appl. Math., {\bf 28}, 1-47 (1975).

\bibitem{fk17} M. Freidlin, L. Koralov: Metastable Distributions of
Markov Chains with Rare Transitions.  J. Stat. Phys. {\bf 167},
1355--1375 (2017)

\bibitem{GM17} G. Di Gesù, M. Mariani: Full metastable asymptotic of the
Fisher information. SIAM J. MATH. ANAL. {\bf 49}, 3048--3072 (2017)

\bibitem{GRV-13} S. Grosskinsky, F. Redig, K. Vafayi: Dynamics of
condensation in the symmetric inclusion process {\bf 18}, article
no. 66 (2013).

\bibitem{kim21} S. Kim. Second time scale of the metastability of
reversible inclusion processes. Probab. Theory Relat. Fields {\bf
180}, 1135--1187 (2021).

\bibitem{ks21} S. Kim, I. Seo: Condensation and Metastable Behavior of
Non-reversible Inclusion Processes. Commun. Math. Phys. {\bf 382},
1343--1401 (2021).

\bibitem{l2014} C. Landim: Metastability for a non-reversible
dynamics: the evolution of the condensate in totally asymmetric zero
range processes. Commun. Math. Phys. {\bf 330}, 1--32 (2014). 

\bibitem{lrev} C. Landim: Metastable Markov chains. Probability
Surveys {\bf 16}, 143--227 (2019).  DOI: 10.1214/18-PS310

\bibitem{llm18} C. Landim, M. Loulakis, M. Mourragui: Metastable
Markov chains: from the convergence of the trace to the convergence of
the finite-dimensional distributions. Electron. J. Probab.  {\bf 23},
paper no. 95 (2018).

\bibitem{lms19} C. Landim, M. Mariani, I. Seo:. A Dirichlet and a
Thomson principle for non-selfadjoint elliptic operators,
Metastability in non-reversible diffusion processes. Arch. Rational
Mech. Anal.  {\bf 231}, 887--938 (2019)  

\bibitem{lmt2015} C. Landim, R. Misturini, K. Tsunoda: Metastability
of reversible random walks in potential field. J.  Stat. Phys.  {\bf
160}, 1449--1482 (2015)

\bibitem{ls2018} C. Landim, I. Seo: Metastability of non-reversible
random walks in a potential field, the Eyring-Kramers transition rate
formula. Comm. Pure. Appl. Math.  {\bf 71}, 203--266 (2018)

\bibitem{ls19} C.  Landim, I.  Seo: Metastability of one-dimensional,
non-reversi\-ble diffusions with periodic boundary conditions.  Ann.
Inst.  H.  Poincar\'e, Probab.  Statist. {\bf 55}, 1850--1889 (2019).

\bibitem{lx16} C. Landim, T. Xu; Metastability of finite state Markov
chains: a recursive procedure to identify slow variables for model
reduction.  ALEA Lat. Am. J. Probab. Math. Stat. {\bf 13}, 725-751
(2016)

\bibitem{ls22} J. Lee, I. Seo: Non-reversible metastable diffusions
with Gibbs invariant measure I: Eyring–Kramers formula. Probab. Theory
Relat. Fields {\bf 182}, 849–903 (2022).

\bibitem{ls22b} J. Lee, I. Seo: Non-reversible metastable diffusions
with Gibbs invariant measure II: Markov chain convergence.
arXiv:2008.08295 (2022).

\bibitem{mar} M. Mariani: A $\Gamma$-convergence approach to large
deviations, arXiv:1204.0640 (2012).

\bibitem{or19} C. Oh, F. Rezakhanlou: Metastability of zero range
processes via Poisson equations. preprint available at
math.berkeley.edu (2019).

\bibitem{rs18} F. Rezakhanlou, I. Seo: Scaling limit of small random
perturbation of dynamical systems arXiv:1812.02069 (2018).

\bibitem{s2018} I. Seo: Condensation of non-reversible zero-range
processes, Commun. Math. Phys. {\bf 366}, 781--839 (2019)

\bibitem{s95} M. Sugiura: Metastable behaviors of diffusion processes
with small parameter. J. Math. Soc. Japan {\bf 47}, 755--788 (1995)


\end{thebibliography}
\end{document}